\numberwithin{equation}{section} 
\theoremstyle{plain}
\newcounter{nonum}
\def\CC{\mathbf{C}}
\def\NN{\mathbf{N}}
\def\QQ{\mathbf{Q}}
\def\ZZ{\mathbf{Z}}
\def\B{{\rm B}}
\def\D{{\rm D}}
\def\E{{\rm E}}
\def\F{{\rm F}}
\def\G{{\rm G}}
\def\H{{\rm H}}
\def\J{{\rm J}}
\def\K{{\rm K}}
\def\L{{\rm L}}
\def\M{{\rm M}}
\def\N{{\rm N}}
\def\P{{\rm P}}
\def\Q{{\rm Q}}
\def\R{{\rm R}}
\def\U{{\rm U}}
\def\W{{\rm W}}
\def\X{{\rm X}}
\def\Y{{\rm Y}}
\def\Z{{\rm Z}}
\def\Cc{\EuScript{C}}
\def\Dd{\EuScript{D}}
\def\Ee{\EuScript{E}}
\def\Oo{\EuScript{O}}
\def\Pp{\EuScript{P}}
\def\Tt{\EuScript{T}}
\def\Ga{\Gamma}
\def\La{\Lambda}
\def\a{\alpha} 
\def\b{\beta}
\def\d{\delta}
\def\e{\varepsilon}
\def\g{\gamma}
\def\k{\kappa}
\def\l{\lambda}
\def\p{\mathfrak{p}}
\def\s{\sigma}
\def\t{\theta}
\def\>{\geqslant}
\def\<{\leqslant}
\def\Hom{{\rm Hom}}
\def\Aut{{\rm Aut}}
\def\Mat{{\rm M}}
\def\GL{{\rm GL}}
\def\Gal{{\rm Gal}}
\def\tr{{\rm tr}}
\def\mult#1{{#1}^{\times}}
\def\qlb{{\overline{\mathbf{Q}}_\ell}}
\def\zlb{{\overline{\mathbf{Z}}_\ell}}
\def\flb{{\overline{\mathbf{F}}_{\ell}}}
\def\ip{\boldsymbol{i}}
\def\r{{\textbf{\textsf{r}}}}
\def\kk{\boldsymbol{k}}
\def\dd{\boldsymbol{d}}
\def\ee{\boldsymbol{e}}
\def\ll{\boldsymbol{l}}
\def\Sp{{\rm Sp}}
\def\kt{\widetilde\k}
\def\bkt{\boldsymbol{\kt}}
\def\bk{\boldsymbol{\k}}
\def\lt{\widetilde\l}
\def\blt{\boldsymbol{\lt}}
\def\bl{\boldsymbol{\l}}
\def\rt{{\widetilde\rho}}
\def\st{\widetilde\s}
\def\bs{\boldsymbol{\s}}
\def\bst{\boldsymbol{\st}}
\def\BJ{\boldsymbol{\J}}
\def\TT{\boldsymbol{\Theta}}
\def\0{\boldsymbol{0}}
\def\RA{\EuScript{R}} 
\def\XA{{\rm Irr}}
\def\GA{\EuScript{G}}
\def\rl{{\textbf{\textsf{r}}}_\ell}
\def\il{{\textbf{\textsf{i}}}_\ell}
\def\tt{\widetilde{\tau}}
\def\pit{\widetilde{\pi}}
\def\TT{\boldsymbol{\Theta}}
\def\BLT{\widetilde{\textbf{\textsf{J}}}_\ell}
\def\BL{{\textbf{\textsf{J}}}_\ell}
\def\plt{\widetilde{\boldsymbol{\pi}}_\ell}
\def\XL{\Y_{\ell}}
\def\XLT{{\X}_{\ell}}
\def\aa{\mathfrak{a}}
\def\bb{\mathfrak{b}}
\def\tdt{\times\dots\times}
\def\odo{\otimes\dots\otimes}
\def\car{\a}
\def\u{u}
\def\xw{\xi_{{\rm w}}}
\def\xt{\xi_{{\rm t}}}
\def\Ger{\G^{{\rm ell}}_{{\rm reg}}}
\def\Psit{\Psi_{{\rm t}}}
\def\bltt{\bs} 
\def\XR{\X_{{\rm reg}}}
\def\Om{\Omega}
\def\dag{*}
\def\ffr#1{\smash{\mathop{\longrightarrow}\limits^{#1}}}
\long\def\MSC#1\EndMSC{\def\arg{#1}\ifx\arg\empty\relax\else
     {\par\narrower\noindent%
     2010 Mathematics Subject Classification: #1\par}\fi}
\long\def\KEY#1\EndKEY{\def\arg{#1}\ifx\arg\empty\relax\else
	{\par\narrower\noindent Keywords and Phrases: #1\par}\fi}
\title[Explicit local Jacquet--Langlands correspondence]
{Towards an explicit local Jacquet--Langlands correspondence 
beyond the cuspidal case 
}
\author{Vincent Sécherre}
\address{Université de Versailles St-Quentin-en-Yvelines\\
Laboratoire de Mathémati\-ques de Versailles\\
45 avenue des Etats-Unis\\
78035 Versailles cedex, France}
\email{vincent.secherre@math.uvsq.fr}
\author{Shaun Stevens}
\address{School of Mathematics, University of East Anglia, 
  Norwich NR4 7TJ, United Kingdom}
\email{Shaun.Stevens@uea.ac.uk}
\begin{abstract}
We show how the modular representation theory of inner forms of general 
linear groups over a non-Archimedean local field can be brought to bear on the
complex theory in a remar\-kable way. 
Let $\F$ be a non-Archimedean locally compact field of residue 
characteristic $p$, and let $\G$ be an inner form of the general linear 
group $\GL_n(\F)$, $n\>1$.
We consider the problem~of~describ\-ing explicitly the local Jacquet--Langlands 
correspondence $\pi\mapsto{}_{{\rm JL}}\pi$
between the complex discrete series represen\-tations of $\G$ 
and $\GL_n(\F)$, in terms of type theory.
We show that the congruence properties of the local Jacquet--Langlands 
correspondence exhibited by A.~M\'\i nguez and the first~named author 
give information about the explicit description of this correspondence. 
We prove that the problem~of the invariance~of~the~endo-class~by the 
Jacquet--Langlands correspondence can be reduced to the case where the 
representations $\pi$ and ${}_{{\rm JL}}\pi$ are both cuspidal with torsion 
number $1$. 
We also give~an~ex\-pli\-cit description of the~Jacquet--Langlands 
correspondence 
for all {essentially tame} discrete series~re\-presentations of $\G$,
up to an~un\-ramified twist, in terms of admissible pairs,
generalizing~previous~re\-sults by Bushnell and Henniart. 
In positive depth, our results are the first beyond the case 
where~$\pi$ and ${}_{{\rm JL}}\pi$ are both cuspidal. 
\end{abstract}
\begin{document} 

\maketitle

\MSC 
22E50 
\EndMSC
\KEY 
Jacquet--Lang\-lands correspondence, 
$\ell$-blocks, 
Congruences mod $\ell$,
Endo-class, Admissible pair, Rectifier.
\EndKEY

\thispagestyle{empty}


\section{Introduction}

\subsection{} 

Let $\F$ be a non-Archimedean locally compact field of residue characteristic $p$, 
let $\H$ be the~general linear group 
$\GL_n(\F)$, $n\>1$, and let $\G$ be an inner form of $\H$.
This is a group of the form $\GL_m(\D)$, where $m$ divides $n$ and $\D$ is a 
central division $\F$-algebra whose reduced degree is denoted $d$,~with 
$n=md$. 
Let $\Dd(\G,\CC)$ denote the set of all isomorphism classes of 
essentially square 
inte\-gra\-ble, irreducible complex smooth representations of $\G$. 
The local Jacquet--Langlands cor\-res\-pon\-dence 
\cite{JL,Rog,DKV,BaduJL}
is a bi\-jec\-tion
\begin{eqnarray*}
\Dd(\G,\CC) & \to & \Dd(\H,\CC) \\
\pi & \mapsto & {}_{{\rm JL}}\pi
\end{eqnarray*}
specified by a character relation on elliptic regular conjugacy classes. 
Bushnell and Henniart~have elaborated a vast programme aiming at giving 
an explicit description of this correspondence~\cite{HJL1,BHJL2,BHLTL3,BHJL3}.
The present article is a contribution to this programme.

We first have to explain what we mean by an explicit description of the 
Jacquet--Lang\-lands cor\-res\-pondence. 
Essentially square inte\-gra\-ble representations 
of $\G$ can be descri\-bed in terms~of para\-bolic induction.
Given such a representation $\pi$,
there are a unique integer $r$ dividing $m$ and a cuspidal 
irreducible representation $\rho$ of $\GL_{m/r}(\D)$,
unique up to isomorphism, such that
$\pi$ is isomorphic to the unique irreducible quotient of the para\-boli\-cally 
induced representation
\begin{equation*}
\label{parsrho}
\rho \times \rho\nu^{s(\rho)} \times\dots\times \rho\nu^{s(\rho)(r-1)}
\end{equation*}
where $\nu$ is the unramified character ``absolute value of the reduced 
norm'' and $s(\rho)$ is a positive integer dividing $d$, associated to 
$\rho$ in \cite{Tadic}. 
The essentially square inte\-gra\-ble repre\-sen\-tation
$\pi$~is en\-tirely characterized by the pair $(\rho, r)$; 
this goes back to Bernstein--Zelevinski \cite{Zel} 
when $\D$~is equal to $\F$, and Tadi\'c \cite{Tadic} in the general case
(see also Badulescu \cite{BaduJIMJ} when $\F$ has positive~char\-acteristic). 
In particular, we may write $s(\pi)=s(\rho)$. 
Similarly, associated with the Jacquet-Lang\-lands transfer ${}_{{\rm JL}}\pi$, 
there are an integer $u$ dividing $n$ and a cuspidal 
irreducible representation $\s$ of $\GL_{n/u}(\F)$.
The integers $r,u$ are related by the identity $u=rs(\pi)$.
It remains to understand how the cuspidal representations $\rho,\s$ are 
related.

Thanks to the theory of simple types, developed by Bushnell and Kutzko 
\cite{BK} for the general linear group $\GL_n(\F)$ 
and by Broussous \cite{BrD}
and the authors \cite{VS1,VS2,VS3,SeSt1} for its inner~forms,
the cuspidal re\-presentation $\rho$ 
is compactly induced from a compact mod centre, open subgroup. 
More precisely, there are an \emph{extended maximal simple type},
made of a compact mod centre sub\-group $\BJ$ of $\GL_{m/r}(\D)$ 
and an ir\-re\-ducible representation $\bl$ of $\BJ$, 
both constructed in a very~spe\-cific way,
such that the compact induction of $\bl$ to $\GL_{m/r}(\D)$ is 
irreducible and isomorphic to $\rho$.
Such a type is uniquely deter\-mined up to con\-ju\-gacy. 
Giving an explicit description of the local Jac\-quet--Lang\-lands 
correspon\-dence
will thus consist of describing the extended maximal 
simple~type associated~with~the~repre\-sen\-tation $\s$ 
in terms of that~of $\rho$.

This programme was first carried out for essentially square integrable 
representations~of~depth zero, by Silberger--Zink \cite{SZ1,SZ2} and 
Bushnell--Henniart \cite{BHlevel0}. Before explaining the~other~cases which 
have already been dealt with, we need to introduce two numerical invariants 
associated~to an essentially square integrable, irreducible representation of 
$\G$. 
Such a representation~$\pi$ has:~a \emph{tor\-sion number $t(\pi)$}, the
number of unramified characters $\chi$ of $\G$ such that the twisted
represen\-tation $\pi\chi$ is isomorphic to $\pi$; and a 
\emph{parametric degree $\delta(\pi)$}, 
defined in \cite{BHJL3} via the theory~of~sim\-ple types, 
which is a multiple of $t(\pi)$ and divides $n$. 
Both of these integers are 
invariant un\-der~the Jac\-quet--Langlands correspondence \cite{BHJL3}. It is
interesting to note that the invariance of the para\-me\-tric degree implies that 
$\delta(\pi)s(\pi)=n/r$. Consequently, the representation ${}_{{\rm JL}}\pi$ 
is cuspidal if and only if the parametric degree of $\pi$ is equal to $n$. 

In \cite{BHJL3}, Bushnell and Henniart treat the case where the cuspidal 
representation $\pi$ is \emph{essentially tame} (that is, $\d(\pi)/t(\pi)$ is 
prime to $p$) and of parametric degree $n$. In that case, they explicitly 
describe the Jacquet--Langlands correspondence by parametrizing the conjugacy 
classes of exten\-ded maximal simple types in $\G$ and $\H$ by objects called 
\emph{admissible pairs} \cite{Howe}. (We will see these objects in 
Section~\ref{SECTION9}.) 

In \cite{BHLTL3},  they also  treat the  case which  is in  some sense  at the
opposite extreme to the essentially tame case, where $n$ is of the form $p^k$, 
with $k\>1$ and $p\neq 2$, and where $\pi$ is a cuspidal~repre\-sen\-tation of 
$\D^\times$ which is \emph{maximal totally ramified} (that is, $\d(\pi)=n$ and 
$t(\pi)=1$). 

In \cite{IT}, Imai and Tsushima treat the case where $\pi$ is an epipelagic 
cuspidal representation of $\G$, that is, of depth $1/n$. 
Such representations are maximal totally ramified. 

With the exception of \cite{SZ1,SZ2} and \cite{BHlevel0}, these results all 
concern cases where~the represen\-ta\-tions $\pi$ and ${}_{{\rm JL}}\pi$ are both 
cuspidal, that is, when $\pi$ is of parametric degree $n$. 
In such ca\-ses, 
since the cuspidal representation $\pi$ can be expressed as the compact 
induction of an exten\-ded~maxi\-mal simple type $(\BJ,\bl)$, there is a 
relatively straightforward formula giving the trace~of~$\pi$ at an elliptic 
regular element in terms of the trace of $\bl$ (see 
\cite[Theorem~A.14]{BHLTL1} and \cite[(1.2.2)]{BHJL3}). The strategies 
followed in \cite{BHJL3,BHLTL3} and \cite{IT} depend crucially on such a 
formula. 
When considering~a non-cuspidal essentially square integrable 
representation, we are in a much less favourable situation. 
For the group $\GL_n(\F)$, 
Broussous \cite{BrCJM} and Broussous--Schneider \cite{BrSch} have 
ob\-tained for\-mulae expressing the trace of such a representation at an elliptic 
regular element by bringing in the theo\-ry of simple types. 
However, in this 
article, we follow a different route. 

\subsection{}

An important first step towards the general case is to look at the behavior of 
the local Jacquet--Lang\-lands correspondence with respect to 
\emph{endo-classes}. 
An endo-class is a type-theoretic inva\-riant as\-so\-ciated to any 
essentially square inte\-gra\-ble representation of $\G$, whose 
construction~re\-quires a considerable machinery \cite{BHLTL1,BSS}. 
However, for cuspidal representations of $\H$, 
it turns out to have a rather simple arithmetical interpretation through the 
local Langlands correspondence~\cite{BHLTL4}. In\-deed, 
two cuspidal irreducible representations of general linear groups over $\F$ 
have the same endo-class if and only if the irreducible representations 
of the absolute Weil group $\EuScript{W}_\F$ associated 
to them by the local Langlands cor\-res\-pon\-dence share an irreducible 
component when restricted to the wild inertia subgroup $\EuScript{P}_\F$.
The local Langlands correspondence thus induces a bijection between the set 
of $\EuScript{W}_\F$-conjugacy classes of irreducible representations of 
$\EuScript{P}_\F$ and the set $\Ee(\F)$ of endo-classes over~$\F$.

It is expected that the local Jacquet--Lang\-lands correspondence 
preserves endo-classes. 
More precisely, there is the following conjecture.

\medskip

\noindent\textbf{\textit{Endo-class Invariance Conjecture}.}
\textit{
For any essentially square inte\-gra\-ble, irreducible complex representation\/ 
$\pi$ of\/ $\G$, the endo-classes of\/ $\pi$ and\/ ${}_{{\rm JL}}\pi$ are the same.}

\medskip

Our first main result is the following (see Theorem~\ref{MainTheoremA}), 
which reduces this conjecture to the case of maximal totally ramified cuspidal 
representations. 

\medskip

\noindent\textbf{\textit{Theorem A}.}
\textit{
Assume that, for all\/ $\F$ and\/ $n$, and all 
cusp\-idal irreducible complex representations\/ $\pi$ of\/ $\G$ 
such that\/ $\d(\pi)=n$ and\/ $t(\pi)=1$, 
the cuspidal representations\/ $\pi$ and\/ ${}_{{\rm JL}}\pi$ 
have the same endo-class. 
Then the Endo-class Invariance Conjecture is true.}

\medskip

Before explaining our strategy, we must first make a detour through the 
modular representa\-tion theory of $\G$ and explain recent developments 
concerning the modular properties of the~Jac\-quet--Langlands correspondence. 
Fix a prime number $\ell$ different from $p$, and consider the smooth 
\emph{$\ell$-adic} representations of $\G$, that is, with coefficients in the 
algebraic closure $\qlb$ of the field of $\ell$-adic numbers. 
There is then 
the notion of \emph{integral} irreducible representation of $\G$: containing a
$\G$-stable $\zlb$-lattice (where $\zlb$ is the ring of integers of $\qlb$), 
which can then be reduced modulo~$\ell$. 
More precisely, given such a
representation $\pi$ containing a stable $\zlb$-lattice $\La$, Vign\'eras
\cite{Vigb,Vigw}  showed that  the  representation $\La\otimes_{\zlb}\flb$  is
smooth of finite length (where $\flb$ is the residue field of $\zlb$), and its
semisimplification  is  independent of  the  choice  of  $\La$; we  call  this
semisimplification the \emph{reduction  mod~$\ell$} of $\pi$. Thus  we can say
that two integral irreducible $\ell$-adic representations of $\G$ are 
\emph{congruent mod~$\ell$} if their reductions mod~$\ell$ are isomorphic. 

To relate this to the local Jacquet--Langlands correspondence, we fix an 
isomorphism of fields between $\CC$ and $\qlb$; replacing one by the other via 
this isomorphism, we get an $\ell$-adic Jacquet--Langlands correspondence 
\begin{equation*}
\Dd(\G,\qlb)\ \ffr{\simeq}\ \Dd(\H,\qlb)
\end{equation*}
which is independent of the choice of isomorphism. Thus one can study the 
compatibility of this correspondence with the relation of congruence mod 
$\ell$, which was done by Dat~\cite{Datj} and then in full generality by M\'\i 
nguez and the first author~\cite{MSj}: two integral representations of 
$\Dd(\G,\qlb)$ are congruent mod~$\ell$ if and only their images under the 
$\ell$-adic Jacquet--Langlands correspondence are congruent mod~$\ell$ 
(\cite[Th\'eor\`eme 1.1]{MSj}). 

We now need to explain how modular representation theory can give us 
information on the com\-plex representation theory. 
The starting point for our 
strategy to prove Theorem~A using~mo\-dular methods is the fact that two 
representations of $\Dd(\G,\qlb)$ which are congruent mod~$\ell$ have the same 
endo-class. The converse is, of course, not true but we will see that one can 
neverthe\-less link two essentially square integrable representations with the 
same endo-class by a chain~of congruence relations. 
Let us explain this in more detail. 

Firstly, for any irreducible $\ell$-adic representation of $\G$, we have a 
notion of \emph{mod-$\ell$ inertial~cuspi\-dal support} 
(Definition~\ref{defmodliss}, and also~\cite{Helm} in the split case), coming 
from the notion of supercuspidal support for irreducible representations of 
$\G$ with coefficients in $\flb$, defined in~\cite{MSc}. 
Two ir\-reducible 
complex representations of $\G$ are said to be \emph{$\ell$-linked} 
(Definitions~\ref{defllinked} and~\ref{lblock}) if there is a field 
isomorphism $\CC\simeq\qlb$ such that the resulting irreducible $\ell$-adic 
representations have the same mod-$\ell$ inertial cuspidal support. This is 
independent of the choice of field isomorphism and it is not hard, using the 
work done in~\cite{MSj}, to show that the Jacquet--Langlands correspondence 
preserves the relation of being $\ell$-linked for essentially 
square-integrable representations (Pro\-position~\ref{lLINKEDJL} and 
Corollary~\ref{lLINKEDJLcoro}). We can now introduce the following definition 
(Definition \ref{deflinked}). 

\medskip

\noindent\textbf{\textit{Definition}.}
\textit{Two irreducible complex representations\/ $\pi,\pi'$ of\/ $\G$ are 
  said to be \emph{linked} if there are a finite sequence of prime numbers\/ 
  $\ell_1,\ldots,\ell_r$, all different from\/ $p$, and a finite sequence of 
  ir\-reducible complex representations\/ $\pi=\pi_0,\pi_1,\ldots,\pi_r=\pi'$ 
  such that, for each\/ $i\in\{1,\dots,r\}$, the representations\/ $\pi_{i-1}$ 
  and\/ $\pi_i$ are\/ $\ell_i$-linked.} 

\medskip

Two  essentially square  integrable complex  representations which  are linked
have the same  endo-class. More generally, if we  define the \emph{semi-simple
  endo-class} of an  irreducible representation to be the  weighted formal sum
of the  endo-classes of the  cuspidal representations in its  cuspidal support
(with   multiplicities   determined   by   the  sizes   of   the   groups   --
see~\eqref{SSEC}), then two irreducible  representations which are linked have
the same semi-simple endo-class. 
The interest of the definition is apparent 
from the following theorem (see Theorem~\ref{MAINTHEOREM11}), which says that 
the converse is also true. 

\medskip

\noindent\textbf{\textit{Theorem B}.}
\textit{Two irreducible complex representations of\/ $\G$ are linked if and 
  only if they have the same semi-simple endo-class.} 

\medskip

In particular, two essentially square integrable complex representations have 
the same endo-class if and only if they are linked; moreover, one can then 
link them by a sequence of essentially square integrable representations 
(Remark~\ref{Solgrub}). 

Theorem~B gives a remarkable reinterpretation of what it means for two 
irreducible complex representations to have the same semi-simple endo-class. 
Beyond the intrinsic interest in explica\-ting the notion of endo-class and its 
relation with modular representation theory, the main in\-te\-rest in this 
reformulation comes from the fact that, applying results from~\cite{MSj}, we 
are able to prove the following (Theorem~\ref{TheoremC}). 

\medskip

\noindent\textbf{\textit{Theorem C}.}
\textit{Two essentially square integrable complex representations of\/ $\G$ 
  are linked if and only if their transfers to\/ $\H$ are linked.} 

\medskip

It follows from Theorems~B and~C that two essentially square integrable 
complex represen\-tations of $\G$ have the same endo-class if and only if their 
transfers to $\H$ have the same endo-class. Thus, denoting by $\Ee_n(\F)$ the 
set of endo-classes over $\F$ of degree dividing $n$, the Jacquet--Lang\-lands 
correspondence induces a bijection 
\begin{equation*}
\boldsymbol{\pi}_1 : \Ee_n(\F) \to  \Ee_n(\F).
\end{equation*}
We now observe the following fact (Proposition~\ref{propintro}). 

\medskip

\noindent\textbf{\textit{Proposition}.}
\textit{For every essentially square integrable complex representation of\/ 
  $\G$, there is a cuspidal complex representation of\/ $\G$ with the same 
  endo-class and with parametric degree\/ $n$.} 

\medskip

To prove the conjecture -- that is, to prove that $\boldsymbol{\pi}_1$ is the 
identity map -- it is therefore~sufficient to prove that, for every cuspidal 
complex representation $\pi$ of $\G$ of parametric degree $n$, 
the~re\-pre\-sentations $\pi$ and ${}_{{\rm JL}}\pi$ have the same endo-class. Using 
techniques developed in~\cite[Sec\-tion~6]{BHJL3}, we can go further and 
show that one need only consider cuspidal representations of parametric degree 
$n$ and torsion number $1$, thus obtaining Theorem~A. Therefore, to prove the 
Endo-class Invariance Conjecture, it remains only to prove the following 
conjecture. 
Say that an endo-class is \textit{totally ramified} if it has 
residual degree $1$, 
that is, if its tame parameter field 
(in the sense of \cite[Section 2]{BHToAnEffectiveLC}) is totally ramified.

\medskip

\noindent
{\textbf{\textit{Conjecture}.}
\textit{For all\/ $\F$ and $n$, and for every totally ramified 
  $\F$-endo-class~$\TT$ 
of degree $n$, there is a cuspidal complex representation $\pi$ of $\G$
with endo-class $\TT$ such that ${}_{{\rm JL}}\pi$ has endo-class $\TT$.}
}

\subsection{}

We now leave to one side the preservation of endo-classes and pass to the next 
step towards and explicit description of the Jacquet--Langlands 
correspondence. 
We will see that the modu\-lar methods described in the previous
paragraphs can be pushed further to yield additional~in\-formation. 
Let $\TT$ be an endo-class of degree dividing $n$ and suppose it 
is invariant under~the
Jacquet--Langlands correspondence, i.e.~$\boldsymbol{\pi}_1(\TT)=\TT$. 
(This is the case, for example, if $\TT$ is~ess\-entially tame or epipelagic.) 
The correspondence thus induces a bijection between iso\-mor\-phism classes of 
essentially square integrable complex representations of $\G$ with endo-class 
$\TT$, and those of $\H$. Since the correspondence is also compatible with 
unramified twisting, we get~a~bijec\-tion 
\begin{equation*}
\Dd_0(\G,\TT)\ \ffr{\simeq}\ \Dd_0(\H,\TT),
\end{equation*}
where $\Dd_0(\G,\TT)$ denotes the set of inertia classes of essentially square 
integrable complex repre\-sen\-tations of $\G$ with endo-class $\TT$. The theory 
of simple types~\cite{BK,VS3,SeSt1,SeSt2} gives us~a~cano\-nical bijection between 
$\Dd_0(\G,\TT)$ and the set $\Tt(\G,\TT)$ of $\G$-conjugacy classes of simple 
types for $\G$ with endo-class $\TT$. More precisely, the inertia class of an 
essentially square integrable~com\-plex representation $\pi$ corresponds to the 
conjugacy class of a simple type $(\J,\l)$, formed of a compact open subgroup 
$\J$ of $\G$ and an irreducible representation $\l$ of $\J$, if and only if 
$\l$ is an irreducible component of the restriction of $\pi$ to $\J$. Thus we 
get a bijection 
\begin{equation}
\label{JLTS}
\Tt(\G,\TT)\ \ffr{\simeq}\ \Tt(\H,\TT).
\end{equation}
To go further, we need to enter into the detail of the structure of simple 
types (Paragraph~\ref{Par5}). 

Given a simple type $(\J,\l)$ of $\G$ with endo-class $\TT$, the group $\J$ 
has a unique normal pro-$p$ subgroup, denoted~$\J^1$. The restriction of $\l$ 
to $\J^1$ is isotypic, that is, it is a direct sum of copies of a single 
irreducible representation $\eta$. This representation $\eta$ can be extended 
to a representation of $\J$ with the same intertwining set as $\eta$. If we 
fix such an extension $\k$, then the representation $\l$ can be expressed in 
the form $\k\otimes\s$, where $\s$ is an irreducible representation of $\J$, 
trivial on $\J^1$. Moreover, the quotient group $\J/\J^1$ is (non-canonically) 
isomorphic to a product of copies of a single finite general linear groups and 
$\s$, viewed as a representation of such a product of groups, is the tensor 
product of copies of a single cuspidal representation. A theorem of 
Green~\cite{Green} allows us to parametrize $\s$ by a character of 
$\kk^\times$, where $\kk$ is a certain finite extension of the residue field 
of $\F$, which depends only on the endo-class $\TT$ and on $n$. 
This character 
is determined up~to conjugation by the Galois group $\Gal(\kk/\ee)$ of $\kk$ 
over a subfield $\ee$ which depends only on $\TT$. 

We denote by $\X$ the group of characters of $\kk^\times$ and by $\Ga$ the 
Galois group $\Gal(\kk/\ee)$. Fixing once and for all a choice of 
representation $\k$ for a \textit{maximal} 
simple type in $\G$ with endo-class~$\TT$, 
we get a bijection from $\Ga\backslash\X$ to $\Tt(\G,\TT)$ (see 
Paragraph~\ref{Par5} for details). 
Making a similar choi\-ce for the group $\H$, 
we also obtain a bijection from $\Ga\backslash\X$ to $\Tt(\H,\TT)$. 
Composing 
with the~bi\-jec\-tion~\eqref{JLTS}, we thus obtain a permutation 
\begin{equation*}
\Upsilon : \Ga\backslash\X \to \Ga\backslash\X
\end{equation*}
which depends on the choice of $\k$ and of its analogue for $\H$. 

We write $[\car]$ for the $\Ga$-orbit of a character $\car\in\X$. 
The following result 
(see Proposition~\ref{ExpliUpsi}), which again is proved via modular methods, 
suggests that, in order to determine the permutation $\Upsilon$ it is 
sufficient to compute the value of $\Upsilon([\car])$ for certain characters 
$\car$ only. 

\medskip

\noindent\textbf{\textit{Proposition}.}
\textit{
Let $\car\in\X$ and let $\ll$ be the unique subfield of $\kk$ such that the 
stabilizer of $\car$ in $\Ga$ is $\Gal(\kk/\ll)$. Suppose there are an 
$\ee$-regular character $\b\in\X$ and a prime number $\ell\neq p$ prime to 
the order of $\ll^\times$ such that the order of $\b\car^{-1}$ is a power of 
$\ell$. Suppose further that $\Upsilon([\b])=[\b\mu]$, for some character 
$\mu\in\X$. Then $\Upsilon([\car])=[\car\nu]$, where $\nu\in\X$ is the unique 
character of order prime to $\ell$ 
such that $\mu\nu^{-1}$ has order a power of $\ell$. } 

\medskip

In fact we need a more powerful version of this result, which we do not 
explain here, which~re\-quires being able to pass from $\G$ to a bigger group 
$\GL_{m'}(\D)$, with $m'>m$. 
(See Section~\ref{Sec8},~in~par\-ti\-cular Paragraph~\ref{nextss}.) 

To conclude, in the final section of the paper, we illustrate this principle 
in the essentially tame case where, for an appropriate choice of $\k$ and of 
its analogue for $\H$, the value of $\Upsilon([\car])$ is known for all 
$\ee$-regular characters $\car\in\X$ (see \cite{BHJL3}). We end with the following 
result (Theorem~\ref{MT9}). 

\medskip

\noindent\textbf{\textit{Theorem D}.}
\textit{
Let $\TT$ be an essentially tame $\F$-endo-class of degree dividing $n$. 
There~is~a~cano\-nically determined character $\mu$ of $\kk^\times$, depending 
only on $m$, $d$ and $\TT$, such that $\mu^2=1$ and 
\begin{equation*}
\Upsilon([\car]) = [\car\mu],
\end{equation*}
for all characters $\car\in\X$. More precisely, $\mu$ is the 
``rectifier'' given by Bushnell--Henniart's First Compari\-son Theorem 
\cite{BHJL3} {\rm 6.1}. 
}

\medskip

See also Theorem \ref{THPAI}, which is a reformulation of Theorem D 
in terms of admissible pairs.

\section{Notation}
\label{Notation}

We fix a non-Archimedean locally compact field $\F$ with residual 
characteristic $p$.

Given $\D$ a finite dimensional central division $\F$-algebra and a 
positive integer $m\>1$, we write $\Mat_{m}(\D)$ for the algebra 
of $m\times m$ matrices with coefficients in $\D$ and $\GL_{m}(\D)$ 
for the group of~its invertible elements. 
Choose an $m\>1$ and write $\G=\GL_m(\D)$.

Given an algebraically closed field $\R$ 
of characteristic different from $p$, we will consider smooth representations of 
the locally profinite group $\G$ with coefficients in $\R$. 
We write $\XA(\G,\R)$ for the set of isomorphism classes of irreducible 
represen\-ta\-tions of $\G$ and $\RA(\G,\R)$ for the Gro\-then\-dieck 
group of its finite length represen\-ta\-tions, identified with the free 
abelian group with basis $\XA(\G,\R)$.
If $\pi$ is a representation of $\G$, the integer $m$ is called its 
\emph{degree}. 

Given $\a=(m_{1},\ldots,m_{r})$ a family of positive integers of sum $m$, 
we write $\ip_\a$ for the functor of standard parabolic induction associated 
with $\a$, normalized 
with respect to the choice of a square root in the field $\R$
of the cardinality $q$ of the residual field of $\F$.
Given, for each $i\in\{1,\ldots,r\}$, a re\-presenta\-tion $\pi_{i}$ of 
$\GL_{m_i}(\D)$, we write
\begin{equation*}
\pi_1\times\cdots\times\pi_r=\ip_{\a}(\pi_1\otimes\cdots\otimes\pi_r).
\end{equation*}

Given a representation $\pi$ and a character $\chi$ of $\G$, 
we write $\pi\chi$ for the twisted representation defined by 
$g\mapsto\chi(g)\pi(g)$.

We fix once and for all 
a smooth additive 
character $\psi:\F\to\mult\R$, trivial on the maximal ideal $\mathfrak{p}$ 
of the ring of integers $\Oo$ of $\F$ but not trivial on $\Oo$. 

\section{Preliminaries}

In this section we let $\R$ be an algebraically closed field of characteristic 
different from $p$.
Write $\G=\GL_m(\D)$ for some positive integer $m\>1$.
Write $d$ for the reduced degree of $\D$ over $\F$, and define $n=md$.

\subsection{}
\label{P21}

Let $\rho$ be a cuspidal irreducible $\R$-representation of $\G$. 
Associated with $\rho$ there is an unramified character $\nu_{\rho}$ of $\G$ 
(see \cite[Section~4]{MSt}).
In \cite{MSc} we attach to $\rho$ and any integer $r\>1$ 
an irredu\-ci\-ble subrepresentation $\Z(\rho,r)$ and an irreducible quotient 
$\L(\rho,r)$ of the induced representation 
\begin{equation}
\label{INDRHO}
\rho\times\rho\nu_{\rho}^{}\tdt\rho\nu_{\rho}^{r-1}
\end{equation}
(see \cite[Paragraph 7.2 and Définition 7.5]{MSc}).

When $\R$ is the field of complex numbers, $\Z(\rho,r)$ and $\L(\rho,r)$ 
are uniquely determined in this way, and all 
essentially square integrable 
representations of $\G$ are 
isomorphic to a representation 
of the form $\L(\rho,r)$ for a unique pair $(\rho,r)$. 

For an arbitrary $\R$, the representation $\L(\rho,r)$ is called a 
\textit{discrete series} $\R$-representation
of $\G$ and $\Z(\rho,r)$ is called a \textit{Speh} $\R$-representation.
If $\rho$ is super\-cuspidal, $\Z(\rho,r)$
is called a super-Speh representation.

Associated to $\rho$ there is also its 
parametric degree $\delta(\rho)$ (see \cite[Section~2]{BHJL3}).
The parametric degree 
is related to the integer $s(\rho)$ defined in \cite[Paragraph~3.4]{MSt} 
by the formula $s(\rho)\delta(\rho)=n$, 
and the character $\nu_\rho$ is equal to $\nu^{s(\rho)}$.

According to \cite[Paragraph~8.1]{MSc},
where the notion of residually nondegenerate 
representa\-tion is defined, the induced representation \eqref{INDRHO} 
contains a unique residually nondegene\-rate~irreduci\-ble subquotient, 
denoted
\begin{equation*}
\Sp(\rho,r).
\end{equation*}
When $\R$ has characteristic $0$, this is equal to $\L(\rho,r)$. 
When $\R$ has characteristic $\ell>0$ however, it may differ from
$\L(\rho,r)$ (see \cite[Remark~8.14]{MSc}).

Assume $\R$ has characteristic $\ell>0$, 
and let us write $\omega(\rho)$ for the smallest positive integer $i\>1$ 
such that 
$\rho\nu_\rho^i$ is isomorphic to $\rho$.
Then the irreducible representation
\begin{equation}
\label{SPCUSP}
\Sp(\rho,\omega(\rho)\ell^v)
\end{equation}
is cuspidal for any integer $v\>0$.
Moreover, any cuspidal non-supercuspidal irreducible represen\-tation is of 
the form \eqref{SPCUSP} for a \emph{supercuspidal} irreducible representation 
$\rho$ and a unique integer $v\>0$ (see \cite[Théorème~6.14]{MSc}).
We record this latter fact for future reference. 

\begin{prop}
\label{Coupure}
Assume $\R$ has positive characteristic $\ell$,
and let $\rho$ be a cuspidal irreducible representation of $\G$. 
There are a unique positive integer $k=k(\rho)$ and a supercuspidal irreducible 
representation $\tau$ of degree $m/k$ such that $\rho$ is isomorphic to 
$\Sp(\tau,k)$.
\end{prop}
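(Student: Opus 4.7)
The plan is to package this proposition as a straightforward consequence of \cite[Théorème 6.14]{MSc}, just recalled in the excerpt, combined with the trivial case where $\rho$ is already supercuspidal. Accordingly I would split into two cases.

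\emph{Existence.} If $\rho$ is supercuspidal, set $k=1$ and $\tau=\rho$. The induced representation \eqref{INDRHO} with $r=1$ is just $\rho$, which is residually nondegenerate (supercuspidals are), hence $\Sp(\rho,1)=\rho$ and the degree of $\tau$ equals $m=m/1$. If $\rho$ is cuspidal but not supercuspidal, apply \cite[Théorème~6.14]{MSc} recalled just above: there exist a supercuspidal $\tau$ and a (unique) integer $v\>0$ with $\rho\cong\Sp(\tau,\omega(\tau)\ell^v)$. Set $k=\omega(\tau)\ell^v$. Since $\Sp(\tau,k)$ is by definition an irreducible subquotient of the parabolically induced representation $\tau\times\tau\nu_\tau\tdt\tau\nu_\tau^{k-1}$ sitting inside $\GL_m(\D)$, the cuspidal $\tau$ must be a representation of $\GL_{m/k}(\D)$, so $k$ divides $m$ and $\tau$ has degree $m/k$.

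\emph{Uniqueness.} Suppose $\rho\cong\Sp(\tau_1,k_1)\cong\Sp(\tau_2,k_2)$ with each $\tau_i$ supercuspidal. Because $\rho$ has a well-defined supercuspidal support (in the sense of \cite{MSc}), and because the supercuspidal support of $\Sp(\tau_i,k_i)$ is the multiset $\{\tau_i\nu_{\tau_i}^j:0\<j<k_i\}$, these two multisets coincide. In particular their cardinality $k_1=k_2=:k$ agrees and the $\tau_i$ lie in a common $\nu$-twist orbit of supercuspidals. If $k=1$ we immediately get $\tau_1\cong\tau_2\cong\rho$; if $k>1$ then $\rho$ is cuspidal non-supercuspidal (as its supercuspidal support has size $>1$), and the uniqueness clause in \cite[Théorème~6.14]{MSc} pins down the pair $(\tau_i,v_i)$ with $k=\omega(\tau_i)\ell^{v_i}$, forcing $\tau_1\cong\tau_2$.

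I do not expect a real obstacle: the only subtlety is to reconcile the parametrization $k=\omega(\tau)\ell^v$ used in \cite[Théorème~6.14]{MSc} with the free positive integer $k$ of the statement, which is taken care of by noting that $\Sp(\tau,k)$ can be cuspidal only for such $k$ (otherwise its supercuspidal support witnesses non-cuspidality), so no additional values of $k$ are possible. The supercuspidal case is then just the degenerate boundary $k=1$ of the same statement, handled separately since \cite[Théorème~6.14]{MSc} is phrased only for cuspidal non-supercuspidal representations.
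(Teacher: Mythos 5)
Your proposal is correct and takes essentially the same route as the paper: the paper in fact gives no proof at all, simply stating the proposition as a ``recording'' of \cite[Th\'eor\`eme~6.14]{MSc}, which it has just quoted, together with the trivial observation that a supercuspidal $\rho$ corresponds to $k=1$. You spell out the degree bookkeeping and the uniqueness of $k$ (via the supercuspidal support multiset) more explicitly than the paper does; the only thing to be slightly careful about is that the uniqueness really asserted (and used elsewhere, via the notation $k(\rho)$) is that of the integer $k$, while $\tau$ is only pinned down up to the harmless twist ambiguity inside its $\nu_\tau$-orbit -- but your cardinality argument already isolates $k$ cleanly, so there is no gap.
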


\subsection{}

In this paragraph, we assume that $\R$ is an algebraic closure $\qlb$ 
of the field of $\ell$-adic numbers.
Let $\rt$ be an $\ell$-adic cuspidal irreducible representation of $\G$. 
Assume $\rt$ is integral \cite{Vigb} and write $a=a(\rt)$ for the 
length of its reduction mod $\ell$, denoted $\rl(\rt)$.

\begin{prop}[{\cite[Theorem~3.15]{MSt}}]
\label{redcusp}
Let $\rho$ be an irreducible factor of $\rl(\rt)$.
Then
\begin{equation*}
\rl(\rt) = \rho+\rho\nu+\dots+\rho\nu^{a-1},
\end{equation*}
where $\nu$ denotes the unramified mod $\ell$ character ``absolute value of the
reduced norm''. 
\end{prop}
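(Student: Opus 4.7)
The plan is to analyze $\rl(\rt)$ via the theory of extended maximal simple types, since $\rt$ being cuspidal and integral places it in a very constrained form. By the classification results for cuspidal $\ell$-adic representations of $\G$, the representation $\rt$ is compactly induced from an extended maximal simple type $(\widetilde\J,\widetilde\l)$; integrality of $\rt$ translates to the existence of a $\widetilde\J$-stable $\zlb$-lattice $\widetilde\La$ in $\widetilde\l$, and compact induction commutes with reduction mod $\ell$, so $\rl(\rt)$ is the compactly induced representation of $\widetilde\La\otimes_{\zlb}\flb$.

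First I would decompose $\widetilde\l\cong\widetilde\k\otimes\widetilde\s$, where $\widetilde\k$ is a $\beta$-extension of the Heisenberg representation on $\widetilde\J^1$ and $\widetilde\s$ is a cuspidal irreducible representation of the finite reductive quotient $\widetilde\J/\widetilde\J^1$, which is (essentially) a general linear group over a finite extension of the residue field of $\F$. By Green's parametrization, $\widetilde\s$ corresponds to a regular $\qlb^\times$-valued character $\widetilde\t$ of the anisotropic torus $\kk^\times$; the key combinatorial input is that the reduction of $\widetilde\t$ mod $\ell$ has $q$-Frobenius orbit of cardinality equal to the length $a=a(\rt)$, and the mod $\ell$ reduction of the cuspidal Green representation decomposes as a sum over this orbit of distinct cuspidal $\flb$-representations of $\widetilde\J/\widetilde\J^1$. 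Compactly inducing yields a decomposition of $\rl(\rt)$ as a multiplicity-free sum of $a$ cuspidal irreducible $\flb$-representations of $\G$.

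The main obstacle is to identify the twist relating consecutive constituents and to match it with the unramified character $\nu$. For this one uses the explicit action of unramified characters on the Green parameter, which factors through the reduced norm and thus through the determinant on $\kk^\times$; twisting the mod $\ell$ parameter by the $q$-Frobenius corresponds precisely to twisting the induced representation by the mod $\ell$ character $\nu$, since the residual value $\nu(\varpi)$ encodes the same shift on the central character as $q$-Frobenius does on $\widetilde\t$. Ordering the Frobenius orbit as $\bar\t,\bar\t^{q},\ldots,\bar\t^{q^{a-1}}$ and choosing $\rho$ to be the constituent corresponding to $\bar\t$, this shows that the constituents are exactly $\rho,\rho\nu,\ldots,\rho\nu^{a-1}$.

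Finally, I would verify that the listed constituents are pairwise non-isomorphic: by construction they correspond to distinct Frobenius conjugates of $\bar\t$, and distinct Green parameters give non-isomorphic cuspidals of the finite reductive quotient, hence distinct compactly induced cuspidal representations of $\G$ after tensoring with $\widetilde\k$. Combined with the multiplicity-free nature of the decomposition, this gives the claimed equality in $\RA(\G,\flb)$.
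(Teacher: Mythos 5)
Your overall framework --- realize $\rt$ as a compactly induced extended maximal simple type, reduce the type mod~$\ell$, and transport the decomposition back to $\G$ --- is the right one and is indeed the approach of \cite{MSt}. The trouble is that the central combinatorial step of your second paragraph is false, and the error propagates.

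You assert that the reduction mod~$\ell$ of the Green cuspidal $\widetilde\s$ of the finite quotient $\J/\J^1$ splits as a multiplicity-free sum of distinct cuspidal $\flb$-representations indexed by the $q$-Frobenius orbit of $\bar\t$, with the orbit cardinality equal to $a(\rt)$. In fact the reduction mod~$\ell$ of an irreducible cuspidal representation of a finite $\GL_{m'}(\dd)$ is \emph{irreducible}, and again cuspidal: unipotent radicals are $p$-groups, so taking unipotent invariants commutes with reduction, and cuspidality is preserved. This irreducibility is precisely what the proof of Proposition~\ref{Gontran} in the present paper relies on, where the reduction $\s_0$ of $\st_0$ is treated as a single maximal simple type. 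When $\bar\t$ fails to be regular the reduction does not split; it becomes cuspidal but not supercuspidal, the finite-group analogue of the $\Sp$ construction. Consequently the reduced simple type on $\J$ is irreducible and the length $a(\rt)$ cannot be read off the finite-group level the way you propose: the decomposition of $\rl(\rt)$ arises one level up, because the $\G$-normalizer of the \emph{reduced} type pair on $\J$ may strictly contain the $\G$-normalizer of the $\ell$-adic one (the character $\bar\t$ may acquire extra Galois symmetry), and the resulting cyclic quotient is what produces the unramified twists by powers of $\nu$.

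Your identification of $a(\rt)$ with the Frobenius orbit size of $\bar\t$ fails in both directions. For a depth-zero cuspidal of $\GL_n(\F)$ whose parameter $\bar\t$ has Frobenius orbit of size $n/k$ with $1<n/k<n$, the normalizer of the reduced type is still all of $\F^\times\GL_n(\Oo)$ (nothing grows, since the centre already gives the whole normalizer of $\GL_n(\Oo)$), so $a(\rt)=1$, while your formula predicts $n/k>1$. Conversely, take $\G=\D^\times$ with $\D$ quaternion and a depth-zero cuspidal whose parameter $\bar\t$ becomes $\Gal(\kk/\ee)$-fixed mod~$\ell$: then the $q$-Frobenius orbit of $\bar\t$ is a singleton, yet $a(\rt)=2$, because the two-dimensional $\ell$-adic extended type of $\D^\times$ reduces to the direct sum of the two extensions of the character $\bar\t$ to all of $\D^\times$, and these two extensions differ exactly by the unramified character of order~$2$.

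Also a word of caution on the phrase ``twisting by the $q$-Frobenius corresponds precisely to twisting by $\nu$'': the correct matching involves the Frobenius with respect to $\ee$ (the residue field of the $\beta$-field), not the Frobenius with respect to $\dd$ which is what parametrizes the cuspidal Green representation of the finite group; these differ unless $\D=\F$, and conflating them is part of what produced the wrong count of $a(\rt)$.
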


Now write $k(\rho)$ for the positive integer associated with $\rho$ 
by Proposition \ref{Coupure}.
Let us write
\begin{equation*}
w(\rt)=a(\rt)k(\rho).
\end{equation*}
This integer has proved to have important properties 
with respect to the local Jacquet--Langlands correspondence \cite{MSj}.

\subsection{}
\label{Par5}

We assume the reader is familiar with the language of simple types. 
For a detailed presentation, see \cite{VS3,MSt}.
For simple strata, we will use the simplified notation of 
\cite[Chapter~2]{BHToAnEffectiveLC}. 

Let $[\aa,\b]$ be a simple stratum in 
the simple central $\F$-algebra $\Mat_{m}(\D)$.
The centralizer of $\b$ in it, denoted $\B$, 
is a simple central $\F[\b]$-algebra.
There are an integer $m'\>1$ and an $\F[\b]$-division algebra 
$\D'$ such that
\begin{equation}
\label{ISOB}
\B \simeq \Mat_{m'}(\D').
\end{equation}
Assume that $\bb=\aa\cap\B$ is a maximal order in $\B$, 
and let us fix an isomorphism \eqref{ISOB} such that the image 
of $\bb$ is the maximal order made of all matrices with integer 
entries. 

Let $(\J,\k)$ be a $\b$-extension with coefficients in $\R$ associated 
with the simple stratum $[\aa,\b]$, that is, we have $\J=\J(\aa,\b)$ 
and the restriction of $\k$ to $\J^1=\J^1(\aa,\b)$ is the Heisenberg 
representation of a simple $\R$-character  associated with $[\aa,\b]$.
Moreover, any $y\in\mult\B$ intertwines $\k$.
(A $\b$-exten\-sion associated with such a simple stratum, that is, such that 
$\bb=\aa\cap\B$ is maximal in $\B$, is said to be \emph{maximal}.)
We write $\TT$ for the endo-class defined by this simple character 
\cite{BHLTL1,BSS}. 
Thanks to the isomorphism \eqref{ISOB} we identify the two groups
\begin{equation*}
\J/\J^1 \simeq \GL_{m'}(\dd),
\end{equation*}
where $\dd$ is the residue field of $\D'$.
We write $\GA$ for the group on the right hand side. 

Recall \cite{VS1,MSt} that the set $\Cc(\aa,\b)$ of simple characters 
associated with $[\aa,\b]$ depends on the choice of 
the smooth additive character $\psi$ fixed in Section \ref{Notation}.

We fix a finite extension $\kk$ of $\dd$ of degree $m'$.
We write $\Sigma$ for the Galois group of this extension 
and $\X$ for the group of $\R$-characters of $\mult\kk$.
Given $\car\in\X$, there is a unique subfield 
$\dd\subseteq\dd[\car]\subseteq\kk$ such that 
the $\Sigma$-stabilizer of $\car$ is $\Gal(\kk/\dd[\car])$ and then 
a character $\car_0$ of $\dd[\car]^\times$ 
such that $\car$ is equal to 
$\car_0$ composed with the norm of $\kk$ over $\dd[\car]$.
If we write $\u$ for the degree of $\dd[\car]/\dd$,
then $\car_0$ 
defines a supercuspidal irre\-ducible $\R$-re\-pre\-sentation 
$\s_0$ of $\GL_{\u}(\dd)$ --- see \cite{Green} if $\R$ has
characteristic $0$ and \cite{Dipper} or \cite{MSf} otherwise.

\begin{rema}
\label{PetrusMartel}
More precisely, if $\R$ has characteristic $0$, 
fix an embedding of $\dd[\car]$ in $\Mat_{\u}(\dd)$.
Then $\s_0$ is the unique 
(up to isomorphism) irreducible re\-pre\-sentation 
of $\GL_{\u}(\dd)$ such that
\begin{equation*}
\tr\ \s_0 (g) = (-1)^{\u-1}\cdot\sum\limits_{\g} \a_0^\g(g),
\end{equation*}
for all $g\in\dd[\car]^\times$ of degree $\u$ over $\dd$, 
where $\g$ runs over $\Gal(\dd[\car]/\dd)$.
\end{rema}

The character $\car\in\X$ thus defines a supercuspidal $\R$-representation
\begin{equation*}
\bs(\car)=\s_0\odo\s_0
\end{equation*}
of the Levi subgroup 
$\GL_{\u}(\dd)\tdt\GL_{\u}(\dd)$ in $\GA$.
Moreover, the fibers of the map $\car\mapsto\bs(\car)$
are the $\Sigma$-orbits of $\X$.
Now write $r$ for the integer defined by $ru=m'$.
The maximal order $\bb$ contains a unique principal order $\bb_r$ 
of period $r$, whose image by \eqref{ISOB} is standard. 
Write $\aa_r$ for the unique order normalized by $\F[\b]^{\times}$ 
such that $\aa_{r}\cap\B=\bb_{r}$,
and $\k_r$ for the transfer of~$\k$ with respect to the simple 
stratum $[\aa_r,\b]$ in the sense of \cite[Proposition~2.3]{MSt}. 
Considering $\bs(\car)$ as a representation of the group 
$\J_r=\J(\aa_r,\b)$ trivial on $\J^1(\aa_r,\b)$, 
we define
\begin{equation*}
\bl(\car) = \k_r\otimes\bs(\car),
\end{equation*}
which is a simple supertype in $\G$ defined on $\J_r$ 
in the sense of \cite{SSb}.
Write $\Ga$ for the Galois group of $\kk$ over $\ee$, where $\ee$ denotes 
the residue field of $\F[\b]$.

\begin{prop}
\label{LaSURJ}
The map
\begin{equation}
\label{MAPBL}
\car \mapsto \bl(\car) 
\end{equation}
induces a surjection from $\X$ 
onto the set of $\G$-conjugacy classes 
of simple supertypes in $\G$ with endo-class $\TT$.
Its fibers are the $\Ga$-orbits of $\X$.
\end{prop}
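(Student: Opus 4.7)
The plan is to prove the two assertions of Proposition~\ref{LaSURJ} separately: surjectivity of $\car\mapsto\bl(\car)$ onto $\G$-conjugacy classes of simple supertypes with endo-class $\TT$, and the identification of its fibers with $\Ga$-orbits on $\X$. The main ingredients are the classification of simple supertypes up to $\G$-conjugacy as developed in \cite{SSb,SeSt1,MSt}, Green's parametrization of cuspidal representations of $\GL_\u(\dd)$ recalled in Remark~\ref{PetrusMartel}, and the structure of the normalizer of $\J_r$ in $\G$, in particular the action on $\J_r/\J_r^1$ induced by conjugation by a uniformizer of the division algebra $\D'$.

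For surjectivity, I would start from an arbitrary simple supertype of $\G$ with endo-class $\TT$. The classification ensures that, up to $\G$-conjugacy, it is built from a simple stratum equivalent to $[\aa,\b]$ and from a $\b$-extension obtained by transferring the fixed $\k$. Using the uniqueness up to $\mult\B$-conjugacy of principal orders of a given period in $\B$ and the transfer properties of $\b$-extensions, I can reduce to a representative of the form $(\J_{r'},\k_{r'}\otimes\bs')$ for some divisor $r'$ of $m'$, where $\bs'$ is trivial on $\J_{r'}^1$ and, on the Levi quotient $\GL_{m'/r'}(\dd)^{r'}$, is the tensor product of $r'$ copies of a single supercuspidal representation $\s$ of $\GL_{m'/r'}(\dd)$. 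By Green's theorem, $\s$ comes from a regular character $\car_0$ of a degree-$(m'/r')$ extension of $\dd$; fixing an embedding of this extension into $\kk$ (identifying it with $\dd[\car]$) and setting $\car=\car_0\circ\N_{\kk/\dd[\car]}$ yields a character $\car\in\X$ with $r=r'$ and $\bl(\car)$ $\G$-conjugate to the given supertype.

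For the fiber description, $\Sigma$-conjugate characters have the same image because, by construction, the fibers of $\car\mapsto\bs(\car)$ are already the $\Sigma$-orbits. To realize the residual action of $\Ga/\Sigma\cong\Gal(\dd/\ee)$ by inner conjugation in $\G$, I would take a uniformizer $\w$ of the maximal order of $\D'$: it normalizes $\bb_r$, and hence $\J_r$, it intertwines $\k_r$ with a controlled twist (since every element of $\mult\B$ intertwines $\b$-extensions), and it induces a generator of $\Gal(\dd/\ee)$ on $\J_r/\J_r^1$. Conjugation by $\w$ therefore sends $\bl(\car)$ to $\bl(\car')$ for $\car'$ the corresponding $\Ga$-translate of $\car$. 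Conversely, suppose $\bl(\car)$ and $\bl(\car')$ are $\G$-conjugate; then the uniqueness of the simple character and the intertwining properties of $\k_r$ force the conjugator to lie in $N_\G(\J_r)$, and an analysis of the action of $N_\G(\J_r)/\J_r$ on $\J_r/\J_r^1$ shows that, modulo the Weyl permutations of the tensor factors in $\bs$, this action is exactly the image of $\Ga$ in $\Aut(\kk/\ee)$, so that $\car'\in\Ga\cdot\car$. The main obstacle is precisely this last step: pinning down the normalizer action on the Levi quotient and matching it, via Green's parametrization and the norm $\N_{\kk/\dd[\car]}$, with the prescribed $\Ga$-action on characters of $\mult\kk$.
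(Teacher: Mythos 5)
The paper's proof is very short: surjectivity is delegated to the definition of simple supertype in \cite[Paragraph~2.2]{SSb} together with Green's parametrization of supercuspidal representations of $\GA$, and the fiber description is delegated entirely to \cite[Theorem~7.2]{SeSt2} (the conjugacy classification of simple types) combined with the $\Ga$-equivariance of $\car\mapsto\bs(\car)$. Your proposal takes the more ambitious route of reconstructing these classification facts from scratch. For surjectivity this is fine, though you should be slightly more careful: after conjugating a given supertype onto $\J_{r'}$, its $\b$-extension need not be the transfer $\k_{r'}$ of the fixed $\k$; by \cite[Th\'eor\`eme~2.28]{VS2} it differs from $\k_{r'}$ by a character of $\J_{r'}/\J_{r'}^1$, and that character must be absorbed into $\s$ before applying Green's theorem. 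This is easy, but it is not what you wrote, since you assume the $\b$-extension already is $\k_{r'}$.

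The gap is in the fiber description, and you yourself flag it at the end. Two specific things are missing. First, when you conjugate by an element $g\in N_\G(\J_r)$ intertwining the simple character, you only know $\k_r^g\cong\k_r\chi_g$ for some character $\chi_g$ of $\J_r/\J_r^1$ (your ``controlled twist''); the equality $g\cdot\bl(\car)g^{-1}\cong\bl(\car')$ therefore reads $\chi_g\bs(\car)^g\cong\bs(\car')$, and you never track $\chi_g$ to conclude $\car'\in\Ga\cdot\car$. Second, the group $N_\G(\J_r)/\J_r$ is generated not by the diagonal uniformizer $\w$ alone but by an element $\Pi_r$ with $\Pi_r^r\equiv\w$, whose action on $\J_r/\J_r^1\cong\GL_{\u}(\dd)^r$ is a cyclic permutation combined with a single Frobenius twist, not a pure Weyl permutation; showing that this action together with $\w$-conjugation produces exactly the $\Ga$-translates of $\bs(\car)$ and nothing more is the real content, and it is precisely what \cite[Theorem~7.2]{SeSt2} supplies. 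Citing that theorem (or its essential ingredients) is the honest way to close the argument; as written, your converse step is an outline of what needs to be proved rather than a proof.
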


\begin{proof}
Surjectivity follows from the definition of simple supertype 
(\cite[Paragraph~2.2]{SSb})
and from the fact that any super\-cuspidal irreducible $\R$-representation of 
$\GA$ is of the form $\bs(\car)$ for some $\car\in\X$ with 
trivial $\Sigma$-stabilizer. 

The description of the fibers follows from \cite[Theorem~7.2]{SeSt2} 
together with the fact that the map 
$\car\mapsto\bs(\car)$ is $\Ga$-equivariant, with fibers 
the $\Sigma$-orbits of $\X$.
\end{proof}

Write $\Tt(\TT,\R)$ for the set of isomorphism classes of simple 
$\R$-supertypes of endo-class $\TT$.

\begin{prop}
The bijection
\begin{equation}
\label{BIJfk}
\{\text{$\Ga$-orbits of $\X$}\}
\leftrightarrow
\{\text{$\G$-conjugacy classes of $\Tt(\TT,\R)$}\}
\end{equation}
depends only on the choice of $\k$, not on that of the isomorphism 
\eqref{ISOB}. 
\end{prop}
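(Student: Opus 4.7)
The plan is to fix two isomorphisms $\phi_1, \phi_2$ satisfying \eqref{ISOB}, write $\bl_1(\car)$ and $\bl_2(\car)$ for the simple supertypes constructed via $\phi_1$ and $\phi_2$ respectively, and show that for every $\car \in \X$ these are $\G$-conjugate. Since by Proposition~\ref{LaSURJ} both maps $\car \mapsto \bl_i(\car)$ descend to bijections \eqref{BIJfk} with the same source and target, it will then follow that they induce the same bijection.

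First I would apply Skolem--Noether to the automorphism $\phi_2 \circ \phi_1^{-1}$ of $\Mat_{m'}(\D')$, which preserves the standard maximal order by hypothesis. It is therefore implemented by conjugation by an element $y \in \GL_{m'}(\D')$ normalizing $\Mat_{m'}(\Oo_{\D'})$. Setting $b = \phi_1^{-1}(y) \in \mult\B \subseteq \mult\G$, this element normalizes $\bb$, hence by the uniqueness characterizations recalled in \S\ref{Par5} also $\aa$, $\bb_r$, $\aa_r$, and the groups $\J, \J^1, \J_r, \J_r^1$; moreover the $\phi_2$-versions of all these objects are the $b$-conjugates of their $\phi_1$-counterparts, so in particular $\J_r^{(2)} = b^{-1}\J_r^{(1)}b$.

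Next I would check directly that $\bl_2(\car) \simeq \bl_1(\car)^b$ as representations of $\J_r^{(2)}$. On the one hand, since $b \in \mult\B$ intertwines $\k$ and normalizes its domain, the underlying simple character $\theta$ satisfies $\theta^b = \theta$, and the $\mult\B$-equivariance of the transfer construction from \cite[Proposition~2.3]{MSt} yields $(\k_r^{(1)})^b \simeq \k_r^{(2)}$. On the other hand, for $j \in \J_r^{(2)}$ with any lift $j' \in \mult{\bb_r^{(2)}}$, the image of $j$ in $\GA$ under the $\phi_2$-identification is $\overline{y\phi_1(j')y^{-1}} = \bar y \cdot \overline{\phi_1(j')} \cdot \bar y^{-1}$, which is exactly the $\phi_1$-image of $bj'b^{-1} \in \mult{\bb_r^{(1)}}$; hence $\bs_2(\car)(j) = \bs_1(\car)(bjb^{-1})$, that is, $\bs_2(\car) = \bs_1(\car)^b$. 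Tensoring gives $\bl_2(\car) \simeq (\k_r^{(1)})^b \otimes \bs_1(\car)^b = \bl_1(\car)^b$, which is $\G$-conjugate to $\bl_1(\car)$.

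The only substantive technical point is the equivariance $(\k_r^{(1)})^b \simeq \k_r^{(2)}$: it expresses that the $b$-conjugate of the transferred maximal $\b$-extension is again the transfer of $\k$ to the $b$-conjugate stratum, and it rests on $b$-invariance of $\theta$, the $\mult\B$-equivariance of the transfer of simple characters between $\Cc(\aa_r^{(1)}, \b)$ and $\Cc(\aa_r^{(2)}, \b)$, and the compatibility of this transfer with the specified extension $\k$. With this in hand the conclusion is immediate.
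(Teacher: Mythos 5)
Your proposal follows the same route as the paper's proof: apply Skolem--Noether to write $\phi_2$ as the composite of $\phi_1$ with conjugation by some $y\in\GL_{m'}(\D')$ normalizing the standard maximal order, set $b=\phi_1^{-1}(y)\in\mult\B$, and check that the $\phi$-dependent objects transform by $b$-conjugation so that $\bl_2(\car)\simeq\bl_1(\car)^b$; the paper's proof records only the conclusion for $\bs(\car)$, and your careful treatment of $\k_r$ and of the quotient identification supplies exactly the details it leaves implicit. One slip in wording: $b$ normalizes $\bb$, $\aa$, $\J$, $\J^1$ (all of which are independent of the isomorphism~\eqref{ISOB}), but it does \emph{not} in general normalize $\bb_r$, $\aa_r$, $\J_r$, $\J_r^1$ --- the normalizer of the standard maximal order in $\GL_{m'}(\D')$ is strictly larger than the normalizer of the standard period-$r$ hereditary order, which is precisely why these objects depend on the choice of~\eqref{ISOB} in the first place. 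Fortunately your argument never invokes this false normalization claim; the property you actually use, and establish correctly, is the weaker one that the $\phi_2$-versions of these objects equal the $b$-conjugates of the $\phi_1$-versions, as expressed by your equality $\J_r^{(2)}=b^{-1}\J_r^{(1)}b$.
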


\begin{proof}
Choosing another isomorphism $\B\simeq\Mat_{m'}(\D')$
such that the image of $\bb$ is the maximal order made of all matrices 
with integer entries has the effect 
-- according to the Skolem--Noether theorem -- 
of conjugating by an element $g\in\GL_{m'}(\D')$ normalizing this standard 
maximal order.

Consequently, if $\bs'(\car)$ is the representation of $\J_r$ trivial on 
$\J^1(\aa_r,\b)$ corresponding to $\car$ with respect to that choice of 
isomorphism, it differs from $\bs(\car)$ by conjugating by $g$.
\end{proof}

\subsection{}

We call an inertial class of supercuspidal pairs of $\G$ 
\emph{simple} if it contains a pair of the form
\begin{equation}
\label{nadir}
(\GL_{m/r}(\D)^r,\rho\otimes\dots\otimes\rho)
\end{equation}
for some integer $r$ 
dividing $m$ and some supercuspidal $\R$-repre\-sentation $\rho$ of 
$\GL_{m/r}(\D)$, and we 
define the endo-class of such an inertial class to be the endo-class 
of $\rho$. 
By \cite[Section~8]{SSb}, 
there is a bijective correspondence between simple inertial classes of 
supercuspidal pairs of the group $\G$ and $\G$-con\-jugacy 
classes~of simple supertypes of $\G$, that preserves 
endo-classes.~More~precise\-ly, the inertial class of \eqref{nadir}, denoted 
$\Omega$, 
corresponds to the 
$\G$-con\-jugacy class of a simple super\-type $(\J,\l)$ if and only if the 
irreducible representations of $\G$ 
occurring as a subquotient of the compact induction of $\l$ to $\G$
are exact\-ly those irreducible representations of $\G$ 
occurring as~a sub\-quotient~of the 
parabolic induction to $\G$ of an element of $\Omega$.

From the previous paragraph, we have an endo-class $\TT$ 
and a maximal $\b$-extension $\k$. 
Combi\-ning the map \eqref{MAPBL} with the correspondence between simple inertial 
classes of supercuspidal pairs and conjugacy classes of simple supertypes, 
we get a surjective map 
\begin{equation}
\label{PARISSC}
\car \mapsto \boldsymbol{\Omega}(\car)
\end{equation}
from $\X$ onto the set of simple inertial classes of supercuspidal pairs of 
$\G$ with associated endo-class $\TT$.
Its fibers are the $\Ga$-orbits of $\X$.

Let us recall the following important result from \cite[Théorème~8.16]{MSc}: 
given an irreducible repre\-sen\-tation $\pi$ of $\G$,
there are integers $m_1,\dots,m_r\>1$ such that $m_1+\dots+m_r=m$, 
and supercuspidal irreducible 
representations $\rho_1,\dots,\rho_r$ of $\GL_{m_1}(\D),\dots,\GL_{m_r}(\D)$ 
respectively, such that $\pi$ occurs as a subquotient of the induced 
representation $\rho_1\times\dots\times\rho_r$.
Moreover, up to renumbering, 
the supercuspidal representations $\rho_1,\dots,\rho_r$ are unique. 
The conjugacy class of the supercuspidal pair 
$(\GL_{m_1}(\D)\times\dots\times\GL_{m_r}(\D),\rho_1\otimes\dots\otimes\rho_r)$ 
is called the \emph{supercuspidal support} of $\pi$.

Let us call an irreducible $\R$-representation of $\G$ \emph{simple} if the
inertial class of its supercuspidal support is simple. 
For instance, any discrete series $\R$-representation of $\G$ is simple.

\begin{defi}
\label{defparaclass}
Let $\pi$ be a simple irreducible representation of $\G$ with endo-class $\TT$. 
The \emph{para\-metrizing class} of $\pi$ is the $\Ga$-orbit of a 
character $\car\in\X$ such that the two following equivalent conditions hold:
\begin{enumerate}
\item 
the supercuspidal support of $\pi$ belongs to the inertial class 
$\boldsymbol{\Omega}(\car)$; 
\item
the representation $\pi$ occurs as a subquotient of the compact 
induction of $\bl(\car)$ to~$\G$.
\end{enumerate}
The para\-metrizing class of $\pi$ is denoted $\X(\k,\pi)$, 
or simply $\X(\pi)$ if there is no ambiguity on the maximal $\b$-extension $\k$.
\end{defi}

\begin{rema}
\label{depkappa}
Let $\k'$ be another maximal $\b$-extension in $\G$.
By \cite[Théorème~2.28]{VS2} there is a character $\chi$
of $\ee^\times$ such that $\k'=\k\xi$, where $\xi$ is 
the character of $\J$ trivial on $\J^1$ that corresponds to 
the character $\chi\circ\N_{\dd/\ee}\circ\det$ of $\GA$,
where $\N_{\dd/\ee}$ is the norm map with respect to $\dd/\ee$.
Then we have $\a'\in\X(\k',\pi)$ if and only if 
$\a'\mu\in\X(\k,\pi)$, where $\mu$ is the character $\chi\circ\N_{\kk/\ee}$ of 
$\kk^\times$. 
\end{rema}

\begin{rema}
When $\R$ has characteristic $0$, the two equivalent conditions of Definition 
\ref{defparaclass} are also equivalent to:
\begin{enumerate}
\item[(3)]
the representation $\pi$ occurs as a quotient of the compact 
induction of $\bl(\car)$ to~$\G$.
\end{enumerate}
Equivalently, the restriction of $\pi$ to $\J_r$ contains $\bl(\car)$ as a 
subrepresentation.
\end{rema}

\section{Linked $\ell$-adic representations}

In this section, we fix a prime number $\ell$ different from $p$
and write $\G=\GL_m(\D)$, $m\>1$.

\subsection{}
\label{Par1}

Let $\pit$ be an irreducible $\ell$-adic representation of $\G$.
Fix a representative $(\M,\rt)$ in the inertial class of its cuspidal support, 
with $\M$ a standard Levi subgroup 
$\GL_{m_1}(\D)\tdt\GL_{m_r}(\D)$ and $\rt$ of the form $\rt_1\odo\rt_r$,
where $\rt_i$ is an $\ell$-adic cuspidal irreducible representation of 
$\GL_{m_i}(\D)$, 
for $i\in\{1,\dots,r\}$, and with $m_1+\dots+m_r=m$.
Since $\rt_i$ is determined up to an unramified twist, we may assume it is
integral, and fix an irreducible subquotient $\rho_i$ of its reduction 
modulo~$\ell$. 
By the classification of mod $\ell$ irreducible cuspidal representations in 
terms of supercuspidal re\-pre\-sentations \cite[Théorème~6.14]{MSc}, 
there are a unique integer $u_i\>1$ dividing $m_i$ and a supercuspidal irreducible 
representation $\tau_i$ of degree $u_i$
such that the supercuspidal support of $\rho_i$ is inertially equivalent to
\begin{equation*}
(\GL_{u_i}(\D)\tdt\GL_{u_i}(\D),\tau_i\odo\tau_i)
\end{equation*}
where the factors are repeated $k_i$ times, with $m_i=k_iu_i$.

\begin{defi}
\label{defmodliss}
Let $\pit$ be an irreducible $\ell$-adic representation of $\G$ as above.
Let us write
\begin{equation*}
\L = \GL_{u_1}(\D)^{k_1}\tdt\GL_{u_r}(\D)^{k_r}, 
\quad
\tau = \underbrace{\tau_1\odo\tau_1}_{k_1 \text{ 
    times}}\odo\underbrace{\tau_r\odo\tau_r}_{k_r \text{ times}}.
\end{equation*}
The inertial class in $\G$ of the supercuspidal pair $(\L,\tau)$, 
denoted $\il(\pit)$, is uniquely determined by the irreducible representation $\pit$. 
It is called the 
\emph{mod $\ell$ inertial supercuspidal support} 
of $\pit$.
\end{defi}

\begin{defi}
\label{lblock}
Two irreducible $\ell$-adic representations $\pit_1,\pit_2$ of $\G$ are said 
to \emph{belong to the same $\ell$-block} if $\il(\pit_1)=\il(\pit_2)$. 

An \emph{$\ell$-block} in the set $\XA(\G,\qlb)$ 
of all isomorphism classes of irreducible $\ell$-adic representations of $\G$
is an equivalence class for the equivalence relation defined by $\il$. 
\end{defi}

Let $\pit$ be an irreducible $\ell$-adic representation of $\G$ as above.
By definition, $\il(\pit)$ depends only on the inertial class of the 
supercuspidal support of $\pit$.
Assume now $\pit$ is integral. 

\begin{lemm}
All irreducible subquotients occurring in $\rl(\pit)$, 
the reduction mod $\ell$ of $\pit$,
have their supercuspidal support in $\il(\pit)$.
\end{lemm}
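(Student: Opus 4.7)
The plan is to show that $\rl(\pit)$ can be written, in the Grothendieck group, as a sum of irreducible subquotients of parabolic inductions from supercuspidal pairs in the inertial class $\il(\pit)$, by reducing the cuspidal support of $\pit$ factor by factor.

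First I would fix, as in the preamble to Definition~\ref{defmodliss}, a representative $(\M,\rt)$ of the cuspidal support of $\pit$, with $\rt=\rt_1\otimes\cdots\otimes\rt_r$ and each $\rt_i$ an integral cuspidal $\ell$-adic representation of $\GL_{m_i}(\D)$. Since $\pit$ occurs as a subquotient of $\ip_\M(\rt)$ and $\pit$ is integral, the compatibility of reduction mod $\ell$ with parabolic induction (Vignéras) implies that in $\RA(\G,\flb)$ we have
\begin{equation*}
\rl(\pit)\ \leqslant\ \rl(\ip_\M(\rt))\ =\ \ip_\M(\rl(\rt_1)\otimes\cdots\otimes\rl(\rt_r)),
\end{equation*}
where $\leqslant$ means that every irreducible subquotient of the left-hand side appears in the right-hand side. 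Next, applying Proposition~\ref{redcusp} to each factor gives $\rl(\rt_i)=\rho_i+\rho_i\nu+\cdots+\rho_i\nu^{a_i-1}$, so $\rl(\pit)$ is a subquotient (in the Grothendieck group) of a finite sum of parabolic inductions of the form $\ip_\M(\rho_1\nu^{j_1}\otimes\cdots\otimes\rho_r\nu^{j_r})$.

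Now I would argue that every such parabolic induction has all its irreducible subquotients with supercuspidal support in $\il(\pit)$. Each $\rho_i\nu^{j_i}$ is again a cuspidal $\flb$-representation (an unramified twist of a cuspidal representation remains cuspidal), so by transitivity of supercuspidal support, the supercuspidal support of any irreducible subquotient of $\ip_\M(\rho_1\nu^{j_1}\otimes\cdots\otimes\rho_r\nu^{j_r})$ is the concatenation of the supercuspidal supports of the $\rho_i\nu^{j_i}$, up to conjugacy. Since unramified twisting on $\GL_{m_i}(\D)$ induces only an unramified twist on each factor of the supercuspidal support, the supercuspidal support of $\rho_i\nu^{j_i}$ is inertially equivalent to that of $\rho_i$, namely $(\GL_{u_i}(\D)^{k_i},\tau_i\otimes\cdots\otimes\tau_i)$ in the notation of Paragraph~\ref{Par1}. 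Concatenating over $i$ yields exactly the inertial class $\il(\pit)=[(\L,\tau)]$.

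The main technical input is the commutation of reduction mod $\ell$ with parabolic induction of integral representations, which is a theorem of Vignéras; once that is granted, the rest is a formal manipulation combining Proposition~\ref{redcusp}, the classification of cuspidal $\flb$-representations of $\G$ via Proposition~\ref{Coupure}, and transitivity of supercuspidal support. The only mild subtlety is to check that integrality is preserved when passing to subquotients (so that $\rl(\pit)$ makes sense as an element of $\RA(\G,\flb)$ that really is bounded above by $\rl(\ip_\M(\rt))$), which follows from the construction of the reduction map as a map of Grothendieck groups of finite length representations.
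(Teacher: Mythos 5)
Your proof is essentially identical to the paper's: both pass from $\pit$ a subquotient of $\rt_1\times\cdots\times\rt_r$ to $\rl(\pit)$ bounded above in the Grothendieck group by $\rl(\rt_1)\times\cdots\times\rl(\rt_r)$, then apply Proposition~\ref{redcusp} to expand each factor, and conclude via transitivity of supercuspidal support and the inertial-class invariance of the $\rho_i$'s. You merely make explicit the appeal to Vign\'eras' commutation of reduction mod $\ell$ with parabolic induction and the Grothendieck-group bookkeeping, which the paper leaves implicit.
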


\begin{proof}
The representation $\pit$ is a subquotient of $\rt_1\tdt\rt_r$.
Since $\pit$ is integral, all the $\rt_i$'s are integral and, 
by Proposition \ref{redcusp}, for each $i$ there is an integer $a_i\>1$ such that
\begin{equation*}
\r_\ell(\rt_i) = \rho_i+\rho_i\nu+\dots+\rho_i\nu^{a_i-1},
\end{equation*}
where $\nu$ denotes the unramified mod $\ell$ character ``absolute value of the
reduced norm''. 
Thus any irreducible subquotient of $\rl(\pit)$ occurs as a subquotient of 
$\rho_1\nu^{i_1}\tdt\rho_r\nu^{i_r}$ for some 
integers $i_1,\dots,i_r\in\NN$. 
The result now follows by looking at the supercuspidal support of each $\rho_i$. 
\end{proof}

\begin{coro}
\label{C1}
Any two integral irreducible $\ell$-adic representation of $\G$ whose 
reductions mod $\ell$ share a common irreducible component belong to the 
same $\ell$-block. 
\end{coro}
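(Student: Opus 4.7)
The plan is to deduce Corollary~\ref{C1} as an essentially immediate consequence of the preceding lemma. Let $\pit_1$ and $\pit_2$ be two integral irreducible $\ell$-adic representations of $\G$ and suppose there is a mod~$\ell$ irreducible representation $\rho$ of $\G$ occurring as a subquotient of both $\rl(\pit_1)$ and $\rl(\pit_2)$.

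First I would recall that any irreducible $\flb$-representation $\rho$ of $\G$ has a supercuspidal support whose inertial class is well defined (by the classification \cite[Th\'eor\`eme~8.16]{MSc}, as recalled just before Definition~\ref{defparaclass}). Denote this inertial class by $[\rho]$. Applying the preceding lemma to $\pit_1$, every irreducible subquotient of $\rl(\pit_1)$ has its supercuspidal support in $\il(\pit_1)$; in particular, since $\rho$ is one such subquotient, $[\rho]=\il(\pit_1)$. Applying the lemma to $\pit_2$ in the same way yields $[\rho]=\il(\pit_2)$. Combining these two equalities gives $\il(\pit_1)=\il(\pit_2)$, so that $\pit_1$ and $\pit_2$ lie in the same $\ell$-block by Definition~\ref{lblock}.

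I do not expect any genuine obstacle here: the corollary is essentially a book-keeping consequence of the lemma together with the well-definedness of the inertial class of the supercuspidal support for mod~$\ell$ irreducible representations. The only small point to be careful about is that the statement of the lemma phrases things as ``supercuspidal support in $\il(\pit)$'', which must be read as equality of inertial classes rather than mere containment of pairs; once this is made explicit the argument above is a direct transitivity.
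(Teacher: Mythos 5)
Your argument is correct and is exactly what the paper intends: the corollary is stated without a separate proof precisely because it follows immediately from the lemma by the transitivity you spell out. You also rightly flag the one point of reading — that ``supercuspidal support in $\il(\pit)$'' means the inertial class of the supercuspidal support equals $\il(\pit)$ — which is what makes the two applications of the lemma combine into $\il(\pit_1)=\il(\pit_2)$.
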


\subsection{}

Let $\pit$ be a simple irreducible $\ell$-adic representation of $\G$. 
There are an integer $r\>1$ dividing~$m$ and a cuspi\-dal irreducible representation 
$\rt$ of $\G_{m/r}$ such that the inertial class of its cuspidal support 
contains 
\begin{equation}
\label{ICQLB}
(\GL_{m/r}(\D)^r,\rt\odo\rt).
\end{equation}
We may assume $\rt$ is integral.
We fix an irreducible subquotient $\rho$ of its reduction modulo $\ell$. 
As in Paragraph \ref{Par1}, there are a unique integer $u\>1$ dividing 
$m/r$ and a supercuspidal irreducible representation $\tau$ of 
degree $u$ such that the supercuspidal support of $\rho$ is inertially 
equivalent to 
$(\GL_{u}(\D)\tdt\GL_{u}(\D),\tau\odo\tau)$,
with $m=kur$.
Therefore, the mod $\ell$ inertial supercuspidal support $\il(\pit)$ 
of the $\ell$-adic discrete series representation $\pit$ is the inertial 
class of the pair 
\begin{equation}
\label{prototypeisc}
(\GL_{u}(\D)^{kr},\tau\odo\tau).
\end{equation}

Recall that, 
according to \cite[Théorème~6.11]{MSc}, any supercuspidal irreducible mod $\ell$ 
representation can be lifted to an $\ell$-adic irreducible representation. 
The following lemma is an immediate consequence of the definition of the 
mod $\ell$ inertial supercuspidal support. 

\begin{lemm}
\label{C2}
Let $\tt$ be an $\ell$-adic lift of $\tau$.
Any simple irreducible $\ell$-adic representation 
whose cuspidal support is inertially equivalent to 
\begin{equation*}
(\GL_{u}(\D)^{kr},\tt\odo\tt)
\end{equation*}
is in the same $\ell$-block as $\pit$.
In particular, the $\ell$-adic discrete series representation 
$\L(\tt,kr)$ is in the same $\ell$-block as $\pit$. 
\end{lemm}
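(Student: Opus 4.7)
The plan is to show that both $\pit$ and any simple irreducible $\ell$-adic representation $\pit'$ with cuspidal support inertially equivalent to $(\GL_{u}(\D)^{kr},\tt\odo\tt)$ have the same mod $\ell$ inertial supercuspidal support, and then to invoke Definition \ref{lblock}. Since, by \eqref{prototypeisc}, $\il(\pit)$ is the inertial class of $(\GL_{u}(\D)^{kr},\tau\odo\tau)$, it suffices to compute $\il(\pit')$ and check it equals this same class.

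To do so, I unwind Definition \ref{defmodliss} for $\pit'$. After an unramified twist that does not affect the inertial class of the supercuspidal support, we may assume the cuspidal support of $\pit'$ is represented by a pair $(\GL_{u}(\D)^{kr},\tt\chi_1\odo\tt\chi_{kr})$ in which every $\tt\chi_i$ is integral. Because $\tt$ is an $\ell$-adic lift of $\tau$, the reduction mod $\ell$ of $\tt$ admits $\tau$ as an irreducible component; by Proposition \ref{redcusp} the reduction mod $\ell$ of each $\tt\chi_i$ is then a sum of unramified twists of $\tau$. Picking such a twist as the factor $\rho_i$ of Paragraph~\ref{Par1}, and observing that $\tau$ is already its own supercuspidal support (so that, in the notation of Definition~\ref{defmodliss}, one has $u_i=u$ and $k_i=1$ for every $i$), one concludes that $\il(\pit')$ is the inertial class of $(\GL_{u}(\D)^{kr},\tau\odo\tau)$. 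This matches $\il(\pit)$, hence $\pit'$ and $\pit$ lie in the same $\ell$-block.

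For the ``in particular'' assertion, $\tt$ is cuspidal as an $\ell$-adic lift of the supercuspidal representation $\tau$, so the discrete series representation $\L(\tt,kr)$ of $\G=\GL_{ukr}(\D)$ is defined, and its cuspidal support $(\GL_{u}(\D)^{kr},\tt\times\tt\nu_{\tt}\times\cdots\times\tt\nu_{\tt}^{kr-1})$ is inertially equivalent to $(\GL_{u}(\D)^{kr},\tt\odo\tt)$. The first part then applies.

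The only potential obstacle is bookkeeping around unramified twists, but it dissolves at once because both Definition~\ref{defmodliss} and mod~$\ell$ inertial equivalence are insensitive to unramified twisting of each cuspidal factor, and because by construction the supercuspidal $\tau$ lies in the reduction mod $\ell$ of $\tt$, so no further choice of factor is needed. This is why the authors describe the lemma as an immediate consequence of the definition.
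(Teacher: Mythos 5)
Your proof is correct and simply spells out the chain of identifications that the paper declares to be an immediate consequence of Definition~\ref{defmodliss}: the reduction mod $\ell$ of each integral factor $\tt\chi_i$ consists of unramified twists of the supercuspidal $\tau$ (by Proposition~\ref{redcusp} and the lift property), so the $u_i$, $k_i$, $\tau_i$ of the definition are $u$, $1$, $\tau$, giving $\il(\pit')=\il(\pit)$ as the inertial class of \eqref{prototypeisc}. This matches the intended argument; the paper just omits the write-up.
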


\subsection{}
\label{Oriol}

{
Recall that we have fixed in Section \ref{Notation}
a smooth additive character 
${\psi}_\ell^{}:\F\to\overline{\mathbf{Q}}{}_\ell^\times$, 
trivial on $\mathfrak{p}$ but not on $\Oo$. 
We may and will assume that it has values in 
$\overline{\mathbf{Z}}{}_\ell^\times$.}
For any simple stratum $[\aa,\b]$~in $\Mat_m(\D)$,
the set of simple $\ell$-adic characters 
associated with $[\aa,\b]$ will be defined with respect to this choice 
(see Paragraph \ref{Par5}),
whereas the set of $\ell$-modular simple characters 
associated with $[\aa,\b]$ will be defined with respect to the reduction mod 
$\ell$ of ${\psi}_\ell^{}$.
Reduction mod~$\ell$ thus induces a bijection between $\ell$-adic 
and $\ell$-modular simple characters associated with $[\aa,\b]$. 
It also induces a bijection between endo-classes of $\ell$-adic 
and $\ell$-modular simple characters. 
Thus we will speak of endo-classes of simple characters, without 
referring to the coefficient field.

Write $\XLT$ for the group of $\qlb$-characters of $\mult\kk$,
and fix a maximal $\ell$-adic $\b$-extension $\kt$ of~$\G$.
The map \eqref{MAPBL} gives us a bijection $\blt_{\ell}$
from $\Ga\backslash\XLT$ onto the set of $\G$-conjugacy classes of 
simple $\ell$-adic super\-types 
of $\G$ with endo-class $\TT$. 
Also write $\XL$ for the group of $\flb$-characters of $\mult\kk$,
and $\k$ for the reduction mod $\ell$ of $\kt$.
This gives us a bijection 
$\bl_{\ell}$ from $\Ga\backslash\XL$ onto the set of $\G$-conjugacy classes of 
simple mod $\ell$ supertypes 
of $\G$ with endo-class $\TT$. 
These two bijections are compatible in the following sense. 

\begin{prop}
\label{Gontran}
Let $\pit$ be a simple irre\-du\-cible $\ell$-adic representation of $\G$ with 
endo-class~$\TT$, let $\car\in\XLT(\pit)$ and let 
$\phi\in\XL$ be the reduction mod $\ell$ of $\car$.
Then the inertial class $\il(\pit)$ corresponds through \eqref{BIJfk} and 
\eqref{PARISSC}
to the $\G$-conjugacy class of the simple supertype $\bl_{\ell}(\phi)$.
\end{prop}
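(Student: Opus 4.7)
The plan is to trace the data on both sides of the claimed equality through the construction of simple (super)types and reduction modulo $\ell$, and to check that they produce the same $\G$-inertial class of mod $\ell$ supercuspidal pairs.

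First, from the hypothesis $\car\in\XLT(\pit)$ and the correspondences \eqref{BIJfk} and \eqref{PARISSC} applied in the $\ell$-adic setting with the chosen $\kt$, Definition \ref{defparaclass} identifies the inertial class of the cuspidal support of $\pit$ with the class of $(\GL_{m/r}(\D)^{r},\rt\odo\rt)$, where $r=m'/u$ with $u=[\dd[\car]:\dd]$ and $\rt$ is the $\ell$-adic cuspidal representation of $\GL_{m/r}(\D)$ containing the maximal simple type obtained from the transfer of $\kt$ to a maximal order in a block of $\aa_{r}$ and from the $\ell$-adic Green supercuspidal $\st_0$ of $\GL_{u}(\dd)$ attached to $\car_0$. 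Up to an unramified twist that does not affect $\il(\pit)$, I may assume $\rt$ is integral.

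Next, I would compute $\il(\pit)$. By Proposition \ref{redcusp}, all irreducible constituents of $\rl(\rt)$ are unramified twists of a single mod $\ell$ cuspidal representation $\rho$ of $\GL_{m/r}(\D)$, and by the reduction compatibility recorded in Paragraph \ref{Oriol}, $\rho$ contains the maximal simple mod $\ell$ type whose $\b$-extension part is the reduction of $\kt_1$ and whose level-$0$ part is the reduction modulo $\ell$, call it $\mu$, of $\st_0$. The crucial ingredient is then to identify the mod $\ell$ supercuspidal support of $\mu$: since reducing mod $\ell$ commutes with the $\Sigma$-action on $\X$, the $\Sigma$-stabilizer of $\phi$ contains that of $\car$, so $u^{\star}:=[\dd[\phi]:\dd]$ divides $u$; setting $k=u/u^{\star}$, the supercuspidal support of $\mu$ should be inertially equivalent to $\s_0(\phi)^{\otimes k}$ on $\GL_{u^{\star}}(\dd)^{k}$, where $\s_0(\phi)$ is the mod $\ell$ Green supercuspidal attached to $\phi_0$. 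This is the main obstacle: it is a purely finite-group statement, asserting that reducing modulo $\ell$ the $\ell$-adic Green supercuspidal associated with a regular character of $\dd[\car]^{\times}$ yields a cuspidal representation whose supercuspidal support is dictated by the enlarged stabilizer field of $\phi$. I would extract it from \cite{Dipper} or \cite{MSf} by tracking the interaction of the classical Green parametrization with reduction of regular characters to possibly non-regular ones.

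Finally, combining these inputs through the theory of simple types propagated from the residue level $\GL_{u}(\dd)$ to $\GL_{m/r}(\D)$, the mod $\ell$ supercuspidal support of $\rho$ is inertially $\tau\odo\tau$ on $\GL_{m/(rk)}(\D)^{k}$, where $\tau$ is the mod $\ell$ supercuspidal of $\GL_{m/(rk)}(\D)$ containing the maximal simple mod $\ell$ type with $\b$-extension part as above and level-$0$ part $\s_0(\phi)$. Hence $\il(\pit)$ is the inertial class of $(\GL_{m/(rk)}(\D)^{rk},\tau\odo\tau)$. On the other side, writing $r^{\star}:=m'/u^{\star}=rk$, the simple supertype $\bl_{\ell}(\phi)=\k_{r^{\star}}\otimes\s_0(\phi)^{\otimes r^{\star}}$ corresponds, via \eqref{BIJfk} and \eqref{PARISSC} applied now in the mod $\ell$ setting with the reduction $\k$ of $\kt$, to precisely this same inertial class, completing the argument.
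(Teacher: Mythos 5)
Your proposal is correct and follows essentially the same route as the paper's own proof: express the cuspidal support of $\pit$ via its maximal simple type, reduce modulo $\ell$, and identify the supercuspidal support of the resulting cuspidal factor $\rho$ by relating the Green parameter of $\st_0$ to that of the mod~$\ell$ supercuspidal attached to $\phi$. The step you flag as the main obstacle (the behaviour of the Green parametrization under reduction of a regular character to a possibly non-regular one, combined with the identification of the supercuspidal support of $\rho$) is precisely what the paper invokes by citing \cite[Lemme~3.2]{MSj}, so your plan of extracting it from \cite{Dipper} or \cite{MSf} would in effect re-derive that lemma.
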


\begin{proof}
Write the inertial class of the cuspidal sup\-port of $\pit$ as in \eqref{ICQLB}. 
Let $r$ be the degree of $\kk$ over $\dd[\car]$ and 
$\st_0$ be the $\ell$-adic supercuspidal representation of 
$\GL_{\u}(\kk)$ associated with $\car$, where $m'=ru$.
There is a maximal $\b$-extension $\kt_0$ of $\GL_{m/r}(\D)$ 
such that $\kt_0\otimes\st_0$ is a maximal simple type contained in 
$\rt$.
Let $\rho$ be an irreducible factor of the reduction mod $\ell$ of $\rt$. 
Then $\rho$ contains the maximal simple type $\k_0\otimes\s_0$,
where $\k_0$ is the reduction mod $\ell$ of $\k_0$ and $\s_0$ is that of 
$\st_0$.

Let $t$ be the degree of $\kk$ over $\dd[\phi]$.
By \cite[Lemme~3.2]{MSj}, 
if we write $\rho$ in the form $\Sp(\tau,k)$, with $\tau$ 
supercuspidal (see Proposition \ref{Coupure}), 
then $kr=t$ and $\s_0$ is the unique nondegenerate irreducible subquotient of 
the induced representation $\s_1\tdt\s_1$, 
where 
$\s_1$ is the supercuspidal mod $\ell$ re\-pre\-sentation of $\GL_{m'/t}(\dd)$ 
corresponding to $\phi$.
Moreover, there exists a maximal $\b$-extension $\k_1$ of $\GL_{m/t}(\D)$ 
such that $\k_1\otimes\s_1$ is a maximal simple type contained in 
$\tau$.
\end{proof}

Given $\car\in\X$, write $[\car]$ for its $\Ga$-orbit and $\phi$ for 
its reduction mod $\ell$.
The orbit $[\phi]$ depends only on $[\car]$, and is called the 
reduction mod $\ell$ of $[\car]$. 

\begin{prop}
\label{beaufort}
Two simple irreducible $\ell$-adic representations of $\G$ of endo-class $\TT$ 
are in the same $\ell$-block if and only if their parametrizing classes have 
the same reduction mod $\ell$.
\end{prop}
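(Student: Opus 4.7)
The plan is to show that both sides of the claimed equivalence translate, via the machinery already set up, to the single combinatorial statement $[\phi_1]=[\phi_2]$ in $\Ga\backslash\XL$, where $\phi_i$ is the reduction mod $\ell$ of a representative $\car_i$ of the parametrizing class of $\pit_i$.

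First I would unwind the left-hand side. Let $\pit_1$ and $\pit_2$ be simple irreducible $\ell$-adic representations of $\G$ with endo-class $\TT$, and choose $\car_i\in\XLT(\pit_i)$ with reduction mod $\ell$ denoted $\phi_i$. By Proposition~\ref{Gontran}, the mod $\ell$ inertial supercuspidal support $\il(\pit_i)$ corresponds, through the composition of~\eqref{BIJfk} with~\eqref{PARISSC} applied in the $\flb$-coefficient setting, to the $\G$-conjugacy class of the simple mod $\ell$ supertype $\bl_{\ell}(\phi_i)$. Hence $\pit_1$ and $\pit_2$ lie in the same $\ell$-block, i.e.\ $\il(\pit_1)=\il(\pit_2)$, if and only if the simple supertypes $\bl_{\ell}(\phi_1)$ and $\bl_{\ell}(\phi_2)$ are $\G$-conjugate.

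Next I would unwind the right-hand side. The parametrizing classes of $\pit_1$ and $\pit_2$ have the same reduction mod $\ell$ means, by the very definition given just before the proposition, that $[\phi_1]=[\phi_2]$ as $\Ga$-orbits in $\XL$. Applying Proposition~\ref{LaSURJ} to the mod $\ell$ bijection $\bl_\ell$, this is in turn equivalent to the $\G$-conjugacy of $\bl_{\ell}(\phi_1)$ and $\bl_{\ell}(\phi_2)$.

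Combining the two translations gives the proposition immediately. There is essentially no obstacle: the content has been placed, by Proposition~\ref{Gontran}, in the statement that the passage $\pit\mapsto\il(\pit)$ is compatible with the passage $[\car]\mapsto[\phi]$ through the parametrizations. I would only need to verify, in passing, that the reduction mod $\ell$ of the $\Ga$-orbit $[\car_i]$ is well defined and independent of the choice of representative $\car_i$, which is immediate from the $\Ga$-equivariance of reduction mod $\ell$ on $\XLT\to\XL$, and that the bijection~\eqref{BIJfk} in the $\flb$-setting is set up with respect to the reduction $\k$ of $\kt$, as specified in Paragraph~\ref{Oriol}.
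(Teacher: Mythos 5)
Your proof is correct and matches the paper's own argument, which is simply stated as ``This follows from Propositions~\ref{LaSURJ} and~\ref{Gontran}.'' You have unpacked exactly that: Proposition~\ref{Gontran} translates the $\ell$-block condition into $\G$-conjugacy of $\bl_\ell(\phi_1)$ and $\bl_\ell(\phi_2)$, and Proposition~\ref{LaSURJ} (applied over $\flb$) translates that $\G$-conjugacy into the equality $[\phi_1]=[\phi_2]$.
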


\begin{proof}
This follows from Propositions \ref{LaSURJ} and \ref{Gontran}. 
\end{proof}

We keep in mind the straightforward but important following fact.

\begin{rema}
\label{ofcourse}
Two simple irreducible $\ell$-adic representations of $\G$ in the same 
$\ell$-block have the same endo-class.
\end{rema}

\section{Linked complex representations}

In this section, 
we write $\G=\GL_m(\D)$, $m\>1$.

\subsection{}

We fix a prime number $\ell$ different from $p$
and an isomorphism of fields $\iota_\ell:\CC\simeq\qlb$.
If $\pi$ is a com\-plex representation of $\G$, 
write $\iota_\ell^*\pi$ for the $\ell$-adic representation of $\G$ obtained 
by extending scalars from $\CC$ to $\qlb$ along $\iota_\ell$.

\begin{defi}
\label{defllinked}
Two irreducible complex representations $\pi_1$, $\pi_2$ of $\G$ 
are said to be \emph{$\ell$-linked} if 
the irreducible $\ell$-adic representations 
$\iota_\ell^*\pi_1$ and $\iota_\ell^*\pi_2$ are in the 
same $\ell$-block.
\end{defi}

\begin{lemm}
\label{doesnotdepend}
This definition does not depend on the choice of $\iota_\ell$.
\end{lemm}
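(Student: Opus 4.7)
The plan is to reduce the independence statement to an equivariance property of the mod $\ell$ inertial supercuspidal support $\il$ under field automorphisms of $\qlb$. Let $\iota_\ell'$ be a second isomorphism $\CC\simeq\qlb$, and set $\sigma=\iota_\ell'\circ\iota_\ell^{-1}$: this is a field automorphism of $\qlb$, and $(\iota_\ell')^*\pi=\sigma_*\iota_\ell^*\pi$ for every complex representation $\pi$, where $\sigma_*$ denotes extension of scalars along $\sigma$. It therefore suffices to prove that, for any integral irreducible $\ell$-adic representations $\pit_1,\pit_2$ of $\G$, the equality $\il(\pit_1)=\il(\pit_2)$ holds if and only if $\il(\sigma_*\pit_1)=\il(\sigma_*\pit_2)$.

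To see this, I would first observe that $\sigma$ maps the valuation ring $\zlb$ to another valuation ring $\sigma(\zlb)$ on $\qlb$, whose residue field is again an algebraic closure of $\FF_\ell$, so $\sigma$ induces a field isomorphism $\bar\sigma$ between the two residue fields, which after identification with $\flb$ becomes an automorphism of $\flb$. I would then trace through Definition~\ref{defmodliss} step by step and check equivariance at each stage: (i) the cuspidal support functor commutes with $\sigma_*$; (ii) if $\La$ is a $\G$-stable $\zlb$-lattice in an integral cuspidal factor $\rt_i$ of the support, then $\sigma_*\La$ is a stable $\sigma(\zlb)$-lattice in $\sigma_*\rt_i$, whose reduction yields $\bar\sigma_*\rl(\rt_i)$ after identification of residue fields via $\bar\sigma$; (iii) for an irreducible factor $\rho_i=\Sp(\tau_i,k_i)$ of $\rl(\rt_i)$ provided by Proposition~\ref{Coupure}, the representation $\bar\sigma_*\rho_i$ is an irreducible factor of the reduction of $\sigma_*\rt_i$ and equals $\Sp(\bar\sigma_*\tau_i,k_i)$, with $\bar\sigma_*\tau_i$ again supercuspidal. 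Combining these three points yields $\il(\sigma_*\pit)=\bar\sigma_*\il(\pit)$, where $\bar\sigma_*$ acts as a bijection on the set of inertial classes of supercuspidal pairs of $\G$, and the equivalence of equalities follows at once.

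The main obstacle is step (iii): one must verify carefully that $\bar\sigma_*$ preserves both the property of being supercuspidal and the canonical decomposition of a cuspidal mod $\ell$ representation into the form $\Sp(\tau,k)$. This should follow from the fact that $\bar\sigma_*$ is an equivalence of categories of smooth $\flb$-representations which respects parabolic induction and subquotients, hence the characterisation of supercuspidals recalled in~\cite{MSc}, but the identification of residue fields used in step~(ii) requires some care to ensure that no ambiguity is introduced into the inertial class on the right-hand side, and that the whole argument is insensitive to the fact that $\sigma$ need not preserve $\zlb$ itself.
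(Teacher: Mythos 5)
The proposal follows essentially the same route as the paper: reduce to an automorphism $\sigma$ of $\qlb$, then argue that the map $\il$ (mod $\ell$ inertial supercuspidal support) is equivariant. However, there are two concrete differences worth flagging.

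First, the paper makes an arrangement that your proposal omits: after reducing to $\sigma\in\Aut(\qlb)$, it twists by an unramified character so that $\pit_1,\pit_2$ are integral \emph{with central characters of finite order}. A finite-order central character takes values in roots of unity, which lie in $\zlb^\times$ and are permuted by $\sigma$; this guarantees that $\sigma_*\pit_1,\sigma_*\pit_2$ are again integral \emph{with respect to the fixed} $\zlb$, so that reduction mod $\ell$ and hence $\il$ can be applied to them directly. Your proposal assumes $\pit_1,\pit_2$ integral but never explains why $\sigma_*\pit_i$ should be integral, which it must be for the argument to proceed.

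Second, and more substantively, your step (ii) does not actually deliver the claimed formula $\rl(\sigma_*\rt_i)=\bar\sigma_*\rl(\rt_i)$. You correctly note that $\sigma_*\La$ is a $\sigma(\zlb)$-lattice in $\sigma_*\rt_i$; but $\rl$ is, by definition, computed with respect to the \emph{fixed} $\zlb$ and its fixed residue field $\flb$, and when $\sigma(\zlb)\neq\zlb$ (which is the generic case, since $\sigma$ need not fix $\QQ_\ell$) a $\sigma(\zlb)$-lattice is not a $\zlb$-lattice and reduces to a representation over a different residue field, with no canonical identification to $\flb$. Your closing paragraph acknowledges this ``requires some care,'' but that caveat is precisely where the argument lives: without a mechanism for comparing the reduction of a $\zlb$-lattice in $\sigma_*\rt_i$ to the reduction of a $\zlb$-lattice in $\rt_i$, the equivariance $\il(\sigma_*\pit)=\bar\sigma_*\il(\pit)$ remains unproved. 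The paper's finite-order central character arrangement is also what makes this comparison tractable in practice (the relevant quantities become roots of unity, handled by the Teichm\"uller lift, so one never has to compare $\zlb$- with $\sigma(\zlb)$-lattices directly). The paper's own proof is admittedly terse at the same point, but it at least puts in place the reductions that resolve the difficulty; your proposal raises the difficulty without resolving it.
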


\begin{proof}
It is enough to prove that,
for any field automorphism $\t\in\Aut(\qlb)$, 
two simple $\ell$-adic representations 
$\pit_1$ and $\pit_2$ of $\G$ are in the same $\ell$-block if and only if 
$\pit_1^\t$ and $\pit_2^\t$ are in the same $\ell$-block.
By twisting by an unramified character of $\G$ if necessary,
we may assume~that~$\pit_1$ and $\pit_2$ are integral
and that their central characters have finite order. 
Thus the central charac\-ters of $\pit_1^\t$ and $\pit_2^\t$ also have finite 
order, which implies that they are integral too.

Given a simple $\ell$-adic representation $\pit$,
write an element of its mod $\ell$ inertial supercuspidal support 
as in \eqref{prototypeisc}.
Then the mod $\ell$ inertial supercuspidal support of $\pit^\t$ is the 
inertial class of the pair 
\begin{equation*}
(\GL_{u}(\D)^{kr},\tau^\t\odo\tau^\t).
\end{equation*}
The result follows.
\end{proof}

\subsection{}

Recall that we have fixed in Section \ref{Notation}
a smooth additive character 
${\psi}:\F\to\CC^\times$, 
trivial on~$\mathfrak{p}$ but not on $\Oo$. 
For any simple stratum $[\aa,\b]$,
the set of simple complex characters 
associated with $[\aa,\b]$ will be defined with respect to this choice 
(see Paragraphs \ref{Par5} and \ref{Oriol}).
Moreover, we assume the character $\iota_\ell\circ\psi$ is the character 
$\psi_\ell$ used in Paragraph \ref{Oriol}.
This choice gives us a bijection between endo-classes of complex 
and $\ell$-adic simple characters of $\G$. 
Again, we will speak of endo-classes of simple characters, without 
referring to the coefficient field.

Let $\k$ be a maximal complex $\b$-extension of $\G$ with endo-class $\TT$. 
Write $\X$ for the group of complex characters of $\mult\kk$.

\begin{lemm}
\label{carl}
Let $\pi$ be a simple irreducible complex representation of $\G$ with 
endo-class~$\TT$.
Then we have 
\begin{equation*}
\car\in\X(\k,\pi) 
\quad\Leftrightarrow\quad
\iota_\ell^{}\circ\car\in\X_\ell^{}(\iota_\ell^*\k,\iota_\ell^*\pi).
\end{equation*}
\end{lemm}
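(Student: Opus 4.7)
The plan is to use the second of the two equivalent characterizations of the parametrizing class in Definition~\ref{defparaclass}: $\car\in\X(\k,\pi)$ precisely when $\pi$ occurs as a subquotient of the compact induction $\ind_{\J_r}^{\G}\bl(\car)$, and similarly on the $\ell$-adic side. Since both $\CC$ and $\qlb$ have characteristic zero, all the ingredients in this characterization --- simple strata, simple characters, Heisenberg representations, $\b$-extensions, and the map $\bl$ --- are transported from one coefficient field to the other by the scalar extension functor $\iota_\ell^{*}$, in a manner compatible with compact induction. Once this is checked, the equivalence in the lemma is a matter of bookkeeping.

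First I would verify that $\iota_\ell^{*}\k$ is a maximal $\ell$-adic $\b$-extension, with the same underlying simple stratum $[\aa,\b]$. The only point to observe is that the sets of simple characters are defined on the two sides with respect to $\psi$ and $\psi_\ell=\iota_\ell\circ\psi$ respectively, so that $\iota_\ell^{*}$ sends the complex simple character underlying $\k$ to the $\ell$-adic simple character underlying $\iota_\ell^{*}\k$, and preserves both the Heisenberg representation and the intertwining property characterizing a $\b$-extension. The transfer $\k_r$ of Paragraph~\ref{Par5} then corresponds under $\iota_\ell^{*}$ to the transfer of $\iota_\ell^{*}\k$ to the stratum $[\aa_r,\b]$.

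The main step is to show $\iota_\ell^{*}\bl(\car)\simeq\blt_\ell(\iota_\ell\circ\car)$. Writing $\bl(\car)=\k_r\otimes\bs(\car)$ and $\bs(\car)=\s_0\odo\s_0$, this reduces to the statement that Green's supercuspidal $\s_0$ attached to the regular character $\car_0$ of $\dd[\car]^{\times}$ satisfies $\iota_\ell^{*}\s_0\simeq\s_{0,\ell}$, where $\s_{0,\ell}$ is Green's $\ell$-adic supercuspidal attached to $\iota_\ell\circ\car_0$. This is the heart of the argument, but it is immediate from the character formula recalled in Remark~\ref{PetrusMartel}: that formula expresses the trace of $\s_0$ on regular elliptic elements of $\dd[\car]^{\times}$ as an explicit $\Gal(\dd[\car]/\dd)$-sum of values of $\car_0$, which is manifestly preserved by $\iota_\ell$, and by Green's theory this trace determines $\s_0$ up to isomorphism.

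To conclude, compact induction is an exact functor that commutes with $\iota_\ell^{*}$, and being a subquotient of a finite length smooth representation is preserved and reflected by $\iota_\ell^{*}$. Combining these observations with the previous step, $\pi$ is a subquotient of $\ind_{\J_r}^{\G}\bl(\car)$ if and only if $\iota_\ell^{*}\pi$ is a subquotient of $\ind_{\J_r}^{\G}\blt_\ell(\iota_\ell\circ\car)$, which yields the desired equivalence. The only point that requires more than formal functoriality is the compatibility of Green's parametrization with $\iota_\ell$, and Remark~\ref{PetrusMartel} makes this transparent; everything else is a routine transport of structure along a field isomorphism.
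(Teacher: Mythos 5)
Your proof is correct and follows essentially the same route as the paper: both reduce the statement to the identity $\iota_\ell^*\bl(\car)\simeq\blt_\ell(\iota_\ell\circ\car)$, handle the $\b$-extension part by functoriality of the constructions under $\iota_\ell^*$, and settle the cuspidal part $\bs(\car)$ via Green's trace formula as recalled in Remark~\ref{PetrusMartel}. The only cosmetic difference is that you phrase the initial equivalence through condition (2) of Definition~\ref{defparaclass} (subquotient of compact induction) while the paper uses the equivalent characteristic-zero formulation that $\pi$ contains the simple type $\bl(\car)$.
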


\begin{proof}
We have $\car\in\X(\k,\pi)$ if and only if $\pi$ contains the simple type 
$\bl(\car)=\bk(\car)\otimes\bs(\car)$, which occurs if and only if 
$\iota_\ell^*\pi$ contains the $\ell$-adic simple type 
$\iota_\ell^*\bl(\car)$.
Thus it suffices to prove that $\iota_\ell^*\bl(\car)$ is equal to 
$\blt_\ell(\iota_\ell\circ\car)$, 
{where $\blt_\ell$ is the map as in 
Paragraph \ref{Oriol} defined 
with respect to the maximal $\b$-extension $\iota_\ell^*\k$. }

Firstly,
the $\ell$-adic $\b$-extension $\bkt_\ell(\iota_\ell\circ\car)$ 
associated with $\iota_\ell\circ\car$ with respect to $\iota_\ell^*\k$ 
is equal to $\iota_\ell^*\bk(\car)$.
Secondly, the $\ell$-adic supercuspidal representation 
$\bst_\ell(\iota_\ell\circ\car)$ associated with $\iota_\ell\circ\car$ 
(with respect to the choice of an isomorphism \eqref{ISOB}) 
is equal to $\iota_\ell^*\bs(\car)$, since it is characterized by a trace 
formula (see Remark \ref{PetrusMartel}).
The result follows.
\end{proof}

\begin{defi}
Let $\car\in\X$.
The \emph{$\ell$-regular part} of $\car$ is the 
unique complex character $\car_\ell\in\X$ whose order is prime to $\ell$ and
such that $\car\car_\ell^{-1}$ has order a power of $\ell$.
\end{defi}

Given $\a\in\X$,
the orbit $[\car_\ell]$ depends only on $[\car]$.
It is called the $\ell$-regular part of $[\car]$, denoted $[\car]_\ell$. 

\begin{prop}
\label{AdeleGodel}
Two simple irreducible complex representations of $\G$ with endo-class $\TT$ 
are $\ell$-linked if and only if the $\ell$-regular 
parts of their parametrizing classes are equal. 
\end{prop}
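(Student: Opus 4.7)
The plan is to chain together the three preceding results (Definition \ref{defllinked}, Proposition \ref{beaufort} and Lemma \ref{carl}) and then match the $\ell$-adic notion of ``same reduction mod $\ell$'' with the complex notion of ``same $\ell$-regular part''.

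First I would unfold the definitions. Let $\pi_1,\pi_2$ be simple irreducible complex representations of $\G$ with endo-class $\TT$, and fix $\car_i\in\X(\k,\pi_i)$ for $i=1,2$. By Definition \ref{defllinked}, the two representations are $\ell$-linked if and only if $\iota_\ell^*\pi_1$ and $\iota_\ell^*\pi_2$ lie in the same $\ell$-block, so, using the maximal $\ell$-adic $\b$-extension $\iota_\ell^*\k$, Proposition \ref{beaufort} translates this into the equality of the reductions mod $\ell$ of the parametrizing classes of $\iota_\ell^*\pi_1$ and $\iota_\ell^*\pi_2$. By Lemma \ref{carl}, those parametrizing classes are precisely $[\iota_\ell\circ\car_1]$ and $[\iota_\ell\circ\car_2]$. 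Hence the statement reduces to the equivalence
\begin{equation*}
[\iota_\ell\circ\car_1]\text{ and }[\iota_\ell\circ\car_2]\text{ have the same reduction mod }\ell
\quad\Longleftrightarrow\quad
[\car_1]_\ell=[\car_2]_\ell.
\end{equation*}

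Next I would prove this elementary equivalence at the level of characters of the finite abelian group $\mult\kk$. Any character $\tilde\car$ of $\mult\kk$ with values in $\zlb^\times$ factors uniquely as a product $\tilde\car=\tilde\car_{\ell'}\tilde\car_{\ell\text{-pri}}$, where $\tilde\car_{\ell'}$ has order prime to $\ell$ and $\tilde\car_{\ell\text{-pri}}$ has order a power of $\ell$. Since the reduction map $\mu_{\ell^\infty}(\zlb)\to\flb^\times$ is trivial while it restricts to an isomorphism on roots of unity of order prime to $\ell$, two such $\ell$-adic characters share the same reduction mod $\ell$ if and only if their $\ell'$-parts coincide. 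Applied to $\tilde\car_i=\iota_\ell\circ\car_i$, and noting that the field isomorphism $\iota_\ell$ preserves orders of roots of unity (so that $\iota_\ell\circ(\car_i)_\ell=(\iota_\ell\circ\car_i)_{\ell'}$), we get the desired equivalence for individual characters. To pass to $\Ga$-orbits, observe that $\Ga$ acts on $\X$ by group automorphisms of $\X$, so the decomposition $\car=\car_\ell\cdot\car_{\ell\text{-pri}}$ is $\Ga$-equivariant; hence the $\ell$-regular part is well-defined on $\Ga$-orbits and the equivalence lifts from characters to orbits.

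Putting these two steps together gives exactly the proposition. The argument is essentially book\-keeping: all the representation-theoretic content has been packaged in Proposition \ref{beaufort} and Lemma \ref{carl}, so no further input from type theory or modular reduction of simple types is needed. The only point requiring a little care -- and the closest thing to an obstacle -- is the verification that the passage between $\ell$-adic characters of $\mult\kk$ and their reductions mod $\ell$ corresponds precisely to extracting the $\ell'$-part and is compatible both with the field isomorphism $\iota_\ell$ and with the Galois action of $\Ga$.
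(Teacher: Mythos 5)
Your proof is correct and takes essentially the same route as the paper's: reduce via Lemma \ref{carl} and Proposition \ref{beaufort} to the assertion that $[\iota_\ell\circ\car_1]$ and $[\iota_\ell\circ\car_2]$ have the same reduction mod $\ell$ if and only if $[\car_1]_\ell=[\car_2]_\ell$, and then verify this at the level of characters of the finite group $\mult\kk$ using the $\ell'$/$\ell$-primary decomposition. The paper's own proof is terser (it simply notes that $[\iota_\ell\circ\car]$ and $[\iota_\ell\circ\car_\ell]$ have the same reduction mod $\ell$ and leaves injectivity of reduction on $\ell$-regular characters implicit), whereas you spell out the bijectivity of reduction on prime-to-$\ell$ roots of unity and the $\Ga$-equivariance needed to pass to orbits, but the underlying ideas are identical.
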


\begin{proof}
Let $\pi_1,\pi_2$ be simple irreducible complex representations of $\G$ with 
endo-class $\TT$. 
We fix $\car_i\in\X(\k,\pi_i)$ for each $i=1,2$.
By Lemma \ref{carl} and Proposition \ref{beaufort}, 
the representations $\pi_1,\pi_2$ are $\ell$-linked if
and only if $[\iota_\ell\circ\car_1]$ and $[\iota_\ell\circ\car_2]$ 
have the same reduction mod $\ell$.
But the reduction mod $\ell$ of $[\iota_\ell\circ\car]$, for a character 
$\a\in\X$, is the same as that of $[\iota_\ell\circ\car_\ell]$.
It follows that we have 
$[\iota_\ell\circ(\car_1)_\ell]=[\iota_\ell\circ(\car_2)_\ell]$, 
thus $[\car_1]_\ell=[\car_2]_\ell$.
\end{proof}

\subsection{}

Write $q$ for the cardinality of the residue field of $\F$.
For each prime number $\ell$ dividing
\begin{equation}
\label{prodqi1}
(q^n-1)(q^{n-1}-1)\dots(q-1)
\end{equation}
we fix an isomorphism of fields $\iota_\ell:\CC\simeq\qlb$.

\begin{defi}
\label{deflinked}
Two irreducible complex representations $\pi,\pi'$ of $\G$ 
are \emph{linked} 
if there are~a finite family $\ell_1,\dots,\ell_r$ of prime numbers dividing 
\eqref{prodqi1}
and a finite family of irreducible complex representations 
$\pi=\pi_0,\pi_1,\dots,\pi_r=\pi'$ such that,
for all integers $i\in\{1,\dots,r\}$, the representations $\pi_{i-1}$ and $\pi_i$ are 
$\ell_i$-linked.
\end{defi}

\begin{rema}
By Lemma \ref{doesnotdepend},
this does not depend on the choice of the isomorphisms 
$\iota_\ell$ for $\ell$ dividing \eqref{prodqi1}.
\end{rema}

Two linked simple complex representations of $\G$ 
have the same endo-class (see Remark \ref{ofcourse}). 
The converse is given by the following proposition.

\begin{prop}
\label{MAINTHEOREM1}
Two simple irreducible complex representations are linked if and 
only if they have the same endo-class. 
\end{prop}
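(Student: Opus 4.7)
The plan is to reduce everything to Proposition \ref{AdeleGodel}, which describes the $\ell$-linking relation in terms of parametrizing classes. The ``linked implies same endo-class'' direction is then immediate: along any chain as in Definition \ref{deflinked}, each $\ell_i$-linking preserves the endo-class (by Remark \ref{ofcourse} together with Lemma \ref{carl}, endo-classes correspond under $\iota_{\ell_i}^{*}$), and the relation of sharing an endo-class is transitive along finite chains.

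For the converse, I would fix simple irreducible complex representations $\pi_1,\pi_2$ with common endo-class $\TT$, a maximal $\b$-extension $\k$ realising $\TT$, and set $[\car_i]=\X(\k,\pi_i)\in\Ga\backslash\X$. The strategy is to link each of $[\car_1],[\car_2]$ to the trivial class $[\mathbf{1}]$ and then concatenate. Concretely, given $\car\in\X$ of order $\ell_1^{a_1}\cdots\ell_s^{a_s}$, write $\car=\car^{(\ell_1)}\cdots\car^{(\ell_s)}$ as the product of its $\ell$-primary components, and set $\mu_0=\car$, $\mu_j=\prod_{i>j}\car^{(\ell_i)}$, so that $\mu_s=\mathbf{1}$. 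Then $\mu_j$ is precisely the $\ell_j$-regular part of $\mu_{j-1}$ (and of itself), so $[\mu_{j-1}]_{\ell_j}=[\mu_j]_{\ell_j}$ for each $j$.

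To promote this chain in $\Ga\backslash\X$ to an actual chain of $\ell_j$-linked simple irreducible complex representations, I would invoke Proposition \ref{LaSURJ} together with the standard fact from the theory of simple types that each simple supertype $\bl(\mu_j)$ is contained in some simple irreducible complex representation of $\G$ (for instance, any irreducible subquotient of the parabolic induction of a representative of $\boldsymbol{\Omega}(\mu_j)$), yielding a representation $\rho_j$ with parametrizing class $[\mu_j]$. Proposition \ref{AdeleGodel} applied to consecutive pairs then produces the required $\ell_j$-linkings, and concatenating the chain for $\car_1$ with the reverse of the one for $\car_2$ exhibits $\pi_1$ and $\pi_2$ as linked. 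The one delicate sanity check will be that every prime $\ell_j$ that appears divides the product \eqref{prodqi1} (so that an isomorphism $\iota_{\ell_j}$ has indeed been fixed): since the order of any character in $\X$ divides $|\kk^\times|=q^{[\kk:\FF_q]}-1$, and a short computation with the ramification of $\F[\b]/\F$ gives $[\kk:\FF_q]\mid n$, we get $|\kk^\times|\mid q^n-1$, so each such $\ell_j$ divides $q^n-1$ and hence \eqref{prodqi1}. This is essentially the only obstacle, and it is purely arithmetic; the rest of the argument is a bookkeeping reduction powered by Proposition \ref{AdeleGodel}.
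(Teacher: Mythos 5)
Your argument is correct and is essentially the paper's: both reduce to Proposition \ref{AdeleGodel} by passing between parametrizing characters one $\ell$-primary component at a time. The paper works directly with $\xi=\car'\car^{-1}$, decomposing it over the full list of primes dividing \eqref{prodqi1} (so the divisibility condition in Definition \ref{deflinked} is automatic), whereas your detour through the trivial character requires the separate --- and correct --- arithmetic check that $|\kk^\times|$ divides $q^n-1$.
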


\begin{proof}
Assume $\pi$ and $\pi'$ are simple irreducible complex representations 
with the same endo-class $\TT$.
Let $\car$ and $\car'$ be characters in $\X(\pi)$ and $\X(\pi')$, 
respectively, and write $\xi={\car'}{\car^{-1}}$.
Let $\ell_1,\dots,\ell_r$ be the prime numbers dividing \eqref{prodqi1}.
The character $\xi$ decomposes uniquely as
\begin{equation*}
\xi = \xi_1\dots\xi_r
\end{equation*}
where the order of $\xi_i$ is a power of $\ell_i$, 
for $i\in\{1,\dots,r\}$.
Write $\car_0=\car$ and define inductively
\begin{equation*}
\car_i=\car_{i-1}\cdot\xi_i
\end{equation*}
for all $i\in\{1,\dots,r\}$.
Let $\pi_i$ be a simple irreducible complex representation of endo-class $\TT$ 
and parametrizing class $[\car_i]$.
The result follows from Proposition \ref{AdeleGodel}. 
\end{proof}

\begin{rema}
\label{Solgrub}
Suppose that $\pi$ and $\pi'$ are discrete series representations with the 
same endo-class. 
The proof of Proposition \ref{MAINTHEOREM1} shows that the simple 
representations $\pi_1,\dots,\pi_{r-1}$ linking $\pi$ to $\pi'$ can be chosen 
to be discrete series representations as well.
\end{rema}

\subsection{}

Let $\pi$ be an irreducible complex representation of $\G$.
Fix a representative $(\M,\rho)$ in its cuspidal support, 
with $\M=\GL_{m_1}(\D)\tdt\GL_{m_r}(\D)$ 
and $\rho=\rho_1\odo\rho_r$,
with $m_1+\dots+m_r=m$, and
where $\rho_i$ is a cuspidal irre\-ducible representation of $\GL_{m_i}(\D)$
for $i\in\{1,\dots,r\}$.
Write $\TT_i$ for the endo-class of $\rho_i$ and 
$g_i$ for the degree of $\TT_i$.
We define the \emph{semi-simple endo-class} of $\pi$ to~be the formal sum
\begin{equation}
\label{SSEC}
\TT(\pi) = \sum\limits_{i=1}^{r} \frac{m_id}{g_i}\cdot\TT_i
\end{equation}
in the free abelian semigroup generated by all $\F$-endo-classes. 
It depends only on the inertial class of the cuspidal support of $\pi$. 

Note that, if $\pi$ is a simple irreducible representation with endo-class 
$\TT$, then its semi-simple endo-class is $\TT(\pi)=ng^{-1}\cdot\TT$ where 
$g$ is the  degree of $\TT$.

The following theorem, which is our first main result,
generalizes Proposition \ref{MAINTHEOREM1}.

\begin{theo}
\label{MAINTHEOREM11}
Two irreducible complex representations are linked if and 
only if they have the same semi-simple endo-class. 
\end{theo}

\begin{proof}
Any two linked irreducible complex representations automatically 
have the same semi-simple endo-class. 
We thus start with two irreducible complex representations $\pi$, $\pi'$ 
with the same semi-simple endo-class.
By \cite[Théorème~4.16]{MSt}, the representation $\pi$ can be written 
\begin{equation*}
\pi = \pi_1\times\pi_2\times\dots\times\pi_k
\end{equation*}
where $\pi_1,\pi_2,\dots,\pi_k$ 
are simple irreducible representations 
whose inertial cuspidal supports are pairwise distinct, 
 and this decomposition is unique up to renumbering. 
We have the following straightforward lemma.

\begin{lemm}
\label{glissement}
Let $\delta$ be an irreducible complex representation of $\GL_{m-k}(\D)$ 
for some integer $k\in\{1,\dots,m-1\}$.
Let $\s$, $\s'$ be two irreducible complex representations of 
$\GL_{k}(\D)$,
and let $\pi$,~$\pi'$ be irre\-ducible subquotients of 
$\s \times \d$ and $\s' \times \d$, respectively.
If $\s$ and $\s'$ are linked, then $\pi$ and~$\pi'$ are linked.
\end{lemm}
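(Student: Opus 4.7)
My plan is to reduce to the single-prime case and then use the description of cuspidal supports of irreducible subquotients of parabolically induced representations.

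First, I would argue by induction on the length of the chain linking $\s$ to $\s'$. If $\s=\s_{0},\s_{1},\dots,\s_{r}=\s'$ with $\s_{i-1}$ and $\s_{i}$ being $\ell_{i}$-linked, I would choose, for each $i\in\{1,\dots,r-1\}$, an arbitrary irreducible subquotient $\pi_{i}$ of $\s_{i}\times\d$, and set $\pi_{0}=\pi$, $\pi_{r}=\pi'$. The single-prime version of the lemma, applied at each step, would then give that $\pi_{i-1}$ and $\pi_{i}$ are $\ell_{i}$-linked, producing the required chain from $\pi$ to $\pi'$.

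Second, I would prove the single-prime case: assuming $\s$ and $\s'$ are $\ell$-linked, I want to show that $\iota_\ell^*\pi$ and $\iota_\ell^*\pi'$ lie in the same $\ell$-block, i.e.\ $\il(\iota_\ell^*\pi)=\il(\iota_\ell^*\pi')$. The key input is the standard fact that the cuspidal support of any irreducible subquotient of a parabolically induced representation $\tau_{1}\times\tau_{2}$ is (up to conjugation) the concatenation of the cuspidal supports of $\tau_{1}$ and $\tau_{2}$. Applied to $\iota_\ell^*\pi\hookrightarrow\iota_\ell^*\s\times\iota_\ell^*\d$ and $\iota_\ell^*\pi'\hookrightarrow\iota_\ell^*\s'\times\iota_\ell^*\d$, this shows that the cuspidal support of $\iota_\ell^*\pi$ (resp.\ $\iota_\ell^*\pi'$) is the union, as a multiset, of the cuspidal supports of $\iota_\ell^*\s$ (resp.\ $\iota_\ell^*\s'$) and of $\iota_\ell^*\d$.

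Third, I would invoke the fact that $\il$ depends only on the inertial class of the cuspidal support: this is immediate from Definition \ref{defmodliss}, since each cuspidal component is used only up to unramified twist before reducing mod $\ell$ and taking its supercuspidal support. The hypothesis $\il(\iota_\ell^*\s)=\il(\iota_\ell^*\s')$ means that the multisets of inertial classes of supercuspidal representations extracted from $\iota_\ell^*\s$ and $\iota_\ell^*\s'$ coincide as an inertial class of supercuspidal pair of $\GL_{k}(\D)$; concatenating with the same multiset coming from $\iota_\ell^*\d$ therefore produces the same inertial class of supercuspidal pair of $\G=\GL_{m}(\D)$. Hence $\il(\iota_\ell^*\pi)=\il(\iota_\ell^*\pi')$, proving that $\pi$ and $\pi'$ are $\ell$-linked.

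The main obstacle is essentially bookkeeping: verifying that inertial equivalence of supercuspidal pairs inside the Levi $\GL_{k}(\D)$ lifts to inertial equivalence of the augmented pairs inside $\G$. This is easy because the conjugating element and unramified twist realizing the equivalence in $\GL_{k}(\D)$ extend trivially on the $\GL_{m-k}(\D)$-factor, but it should be stated explicitly. No genuinely hard point intervenes, which matches the paper's description of the lemma as straightforward.
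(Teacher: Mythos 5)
Your proof is correct and supplies the argument the paper elides by calling the lemma ``straightforward'': the single-prime case reduces to the observation that $\il$ from Definition \ref{defmodliss} is additive under parabolic induction, since the cuspidal support of an irreducible subquotient of $\s\times\d$ is the concatenation of those of $\s$ and $\d$, and the general case follows by a trivial induction along the chain of primes. The bookkeeping point you flag at the end -- that an inertial equivalence of supercuspidal pairs in $\GL_k(\D)$ extends to one in $\G=\GL_m(\D)$ by acting as the identity on the $\GL_{m-k}(\D)$ factor -- is indeed all that remains, and it works exactly as you describe.
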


For each $i\in\{1,\dots,k\}$,
thanks to Lemma \ref{glissement} and Proposition \ref{MAINTHEOREM1}, 
we may and will assume that 
$\pi_i$ is a discrete series representation of the form 
$\L(\rho_i,r_i)$ for some cuspidal representation~$\rho_i$ of 
$\GL_{m_i}(\D)$ with same endo-class as $\pi_i$
and some integer $r_i$, such that 
$m_1r_1+\dots+m_kr_k=m$.
We may even assume that $\rho_i$ has minimal degree among all 
cuspidal irreducible representations of $\GL_{a}(\D)$, $a\>1$,
with the same endo-class as $\pi_i$.
This amounts to saying that $m_i$ is equal to $g_i/(g_i,d)$, where $g_i$ is the 
degree of the endo-class of $\pi_i$.

Moreover, if $\rho_i$, $\rho_j$ have the same endo-class for some 
$i,j\in\{1,\dots,k\}$, 
then they have the same degree, thus they are linked.
We thus may assume 
$\rho_1,\dots,\rho_k$ have distinct
endo-classes, denoted $\TT_1,\dots,\TT_k$, respectively.

Similarly, we may assume the representation $\pi'$ decomposes as a product 
$\pi'_1\times\pi'_2\times\dots\times\pi'_t$,
where $\pi'_j$ is a discrete series representation of the form 
$\L(\rho_j',s_j^{})$ for some cuspidal representation $\rho'_j$ of 
$\GL_{m_j'}(\D)$ and some integer $s_j^{}\>1$,
and we may assume that the endo-classes $\TT'_1,\dots,\TT'_t$ 
of $\rho_1',\dots,\rho_t'$ are distinct.
It follows that $k=t$ and, up to renumbering, we may assume that we have
$\TT_i'=\TT_i^{}$ for~each $i\in\{1,\dots,k\}$.
It then follows that $\rho'_i$ and $\rho_i^{}$ have the same degree, 
by the minimality of $m_i$.

Since $\pi$ and $\pi'$ have the same semi-simple endo-class,
we have $s_i=r_i$ for all $i$, thus $\pi_i^{}$ and~$\pi'_i$ 
have the same degree.
Proposition \ref{MAINTHEOREM1} then implies that 
$\pi_i^{}$ and~$\pi'_i$ are linked.
Theorem \ref{MAINTHEOREM11}~now follows from Lemma \ref{glissement} again.
\end{proof}

\section{Application to the local Jacquet--Langlands correspondence} 

We fix $n=md$ and write $\G=\GL_m(\D)$ and $\H=\GL_{n}(\F)$. 
As in the introduction, we write $\Dd(\G,\CC)$ for the set of all isomorphism 
classes of complex discrete series representations of $\G$, and similarly for 
$\H$. 
We write
\begin{equation}
\label{LJLC}
\boldsymbol{\pi} : 
\Dd(\G,\CC)\to\Dd(\H,\CC)
\end{equation}
for the local Jacquet--Langlands correspondence. 

\subsection{}

We fix an isomorphism of fields $\iota_\ell:\CC\simeq\qlb$ 
and write (as in \cite{MSj})
\begin{equation}
\label{corrjlt}
\plt : \Dd(\G,\qlb) \to \Dd(\H,\qlb)
\end{equation}
for the $\ell$-adic local Jacquet--Langlands correspondence between $\ell$-adic 
discrete series representa\-tions of $\G$ and $\H$.
The correspondence \eqref{corrjlt} 
does not depends on the choice of $\iota_\ell$
(\cite[Remar\-que~10.1]{MSj}).
According to \cite[Paragraph~3.1]{BaduJIMJ}, 
there is a unique surjective group homomorphism
\begin{equation*}
\BLT : \RA(\H,\qlb) \to \RA(\G,\qlb)
\end{equation*}
where $\RA(\G,\qlb)$ is the Grothendieck group of finite length 
$\ell$-adic representations of $\G$, with the following property: 
given positive integers $n_1,\dots,n_r$ such that $n_1+\dots+n_r=n$ 
and an $\ell$-adic discrete series representation $\st_i$ of 
$\GL_{n_i}(\F)$ for each $i$, 
the image of the product $\st_1\tdt\st_r$ by $\BLT$ is $0$ if $n_i$ is not 
divisible by $d$ for at least one $i$, and is $\pit_1\tdt\pit_r$ otherwise, 
where $n_i=m_id$ and $\pit_i$ is the $\ell$-adic discrete series representation 
of $\GL_{m_i}(\D)$ whose Jacquet--Langlands transfer is $\st_i$, for each $i$. 

By \cite[Théorème~12.4]{MSj}, there exists a unique surjective 
group homomorphism of Grothendieck groups
$\BL : \RA(\H,\flb) \to \RA(\G,\flb)$ 
such that the diagram 
\begin{equation*}
\begin{CD}
{\RA}(\H,\qlb)^{{\rm e}} @>{\BLT}>>
{\RA}(\G,\qlb)^{{\rm e}} \\
@V{\r_\ell}VV @VV {\r_\ell} V \\
\RA(\H,\flb)^{\phantom{\rm e}} @>>{\BL}> \RA(\G,\flb)^{\phantom{\rm e}} \\
\end{CD}
\end{equation*}
is commutative, where ${\RA}(\G,\qlb)^{{\rm e}}$ is the subgroup of 
$\RA(\G,\qlb)$ generated by integral irreducible representations,
and $\RA(\G,\flb)$ is the Grothendieck group of 
$\ell$-modular representations of $\G$.

\begin{prop}
\label{lLINKEDJL}
Let $\pit_1$ and $\pit_2$ be $\ell$-adic discrete series representations of 
$\G$, and write $\st_1$, $\st_2$ for their Jacquet--Langlands transfers to 
$\H$, respectively. 
If $\st_1$, $\st_2$ are in the same $\ell$-block of $\H$, 
then $\pit_1$, $\pit_2$ are in the same $\ell$-block of $\G$.
\end{prop}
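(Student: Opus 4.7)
The plan is to transport the hypothesis from the $\H$-side to the $\G$-side via the commutative diagram of \cite[Th\'eor\`eme~12.4]{MSj} linking the $\ell$-adic Jacquet--Langlands transfer $\BLT$ and its mod-$\ell$ analogue $\BL$. After replacing each $\pit_i$ by an unramified twist (which preserves $\ell$-blocks on both sides in view of Definition~\ref{defmodliss}), I may assume $\pit_1,\pit_2$, and hence $\st_1,\st_2$, are integral. The defining property $\BLT(\st_i)=\pit_i$ on discrete series, together with the commutative diagram, then gives
\begin{equation*}
\rl(\pit_i) \;=\; \BL(\rl(\st_i)) \quad \text{in } \RA(\G,\flb).
\end{equation*}
Since every irreducible component of $\rl(\st_i)$ has mod-$\ell$ inertial supercuspidal support equal to $\il(\st_i)$, the hypothesis puts both $\rl(\st_1)$ and $\rl(\st_2)$ in a single common $\ell$-block $\bar\TT_\H$ of $\H$.

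To conclude, I would show that $\BL$ carries any element of $\RA(\H,\flb)$ supported on a single $\ell$-block $\bar\TT_\H$ to an element of $\RA(\G,\flb)$ supported on a single $\ell$-block $\bar\TT_\G$ depending only on $\bar\TT_\H$. Since $\pit_i$ is a discrete series, hence simple, Proposition~\ref{beaufort} reformulates the goal as an equality of $\ell$-regular parts of parametrizing classes. I would verify this first on a canonical representative in each block: by Lemma~\ref{C2}, each $\ell$-block of $\H$ is represented by some $\L(\tt_\H,c)$ with $\tt_\H$ an $\ell$-adic lift of the supercuspidal underlying $\bar\TT_\H$. The image of this representative under $\BLT$ is either a discrete series $\L(\tt_\G,c')$ of $\G$, when the divisibility condition $d\mid n/c$ holds, or zero; in the former case the block $\bar\TT_\G$ is read off from the mod-$\ell$ reduction of $\tt_\G$, and Lemma~\ref{C2} guarantees that this block depends only on $\bar\TT_\H$, not on the choice of representative.

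The main obstacle is extending this analysis from cuspidal-lift representatives to arbitrary irreducible components appearing in $\rl(\st_i)$, because $\BL$ is only defined through the formula on products of discrete series and a general irreducible mod-$\ell$ representation $\sigma$ of $\H$ need only appear as a subquotient of such a product. Concretely, one must verify that for any irreducible $\sigma$ with $\il(\sigma)=\bar\TT_\H$, every irreducible contribution to $\BL(\sigma)$ has mod-$\ell$ inertial supercuspidal support $\bar\TT_\G$. This reduces, via the classification of mod-$\ell$ irreducibles by inertial supercuspidal support and the explicit description of $\BL$ on products of discrete series, to tracking supercuspidal supports along parabolic induction under the mod-$\ell$ Jacquet--Langlands transfer of supercuspidals; this is the technical heart of the argument.
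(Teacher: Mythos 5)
Your proposal identifies two of the main ingredients—the commutative diagram of \cite[Th\'eor\`eme~12.4]{MSj} relating $\BLT$ and $\BL$, and the canonical block representatives $\L(\tt,v)$ furnished by Lemma~\ref{C2}—and the relation $\rl(\pit_i)=\BL(\rl(\st_i))$ after twisting to make everything integral. But you then stop exactly at the step the paper actually carries out, flagging it yourself as ``the technical heart of the argument'' without supplying the mechanism that makes it work. The missing idea is the Zelevinski involution: the paper replaces $\st,\st_1$ and $\pit,\pit_1$ by their Zelevinski duals, turning discrete series into Speh representations, precisely because the mod-$\ell$ Speh representation $\Z(\rho_1,r_1)$, with $\rho_1=\Sp(\tau,k_1)$, admits an explicit decomposition in the Grothendieck group as a $\ZZ$-linear combination of \emph{products} $\Z(\tau\nu^{i_1},v_1)\tdt\Z(\tau\nu^{i_r},v_r)$ of super-Speh representations, by \cite[Lemme~9.41]{MSc}. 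These products are the objects on which the defining formula for $\BLT$ (lift each super-Speh, take Zelevinski duals, apply the formula for products of discrete series, dualize back) can be read off, and the commutative diagram then transports the decomposition to the $\G$-side as a $\ZZ$-linear combination of products $\Z(\a\nu^{i_1},t_1)\tdt\Z(\a\nu^{i_r},t_r)$, all of whose irreducible subquotients visibly have supercuspidal support built on the single supercuspidal $\b$ with $\a=\Sp(\b,k)$. Corollary~\ref{C1} then finishes.

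A secondary issue: your intermediate formulation—``for any irreducible $\sigma$ with $\il(\sigma)=\bar\TT_\H$, every irreducible contribution to $\BL(\sigma)$ has mod-$\ell$ inertial supercuspidal support $\bar\TT_\G$''—is stronger than what the paper proves and not clearly what you need. Since $\BL$ is a Grothendieck-group homomorphism, $\BL(\sigma)$ for irreducible $\sigma$ can have signed cancellations, and controlling each irreducible $\sigma$ separately is harder than necessary. The paper deliberately avoids this by working only with the effective class $\rl(\st_1^*)=\Z(\rho_1,r_1)$ for the specific $\st_1$ in hand and decomposing it explicitly, rather than attempting to prove a block-preservation statement for $\BL$ on arbitrary elements of $\RA(\H,\flb)$.
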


\begin{proof}
Let us write $\st_i=\L(\rt_i,r_i)$ and $k_i=k(\rt_i)$ for $i=1,2$.
Then $k_1r_1=k_2r_2$, which we denote by $v$, 
and the mod $\ell$ inertial supercuspidal support of $\st_1$ 
and $\st_2$ contains the supercuspidal pair
\begin{equation*}
(\GL_{u}(\F)\tdt\GL_u(\F),\tau\odo\tau),
\end{equation*}
with $uv=m$ and for some mod $\ell$ supercuspidal representation $\tau$ 
of $\GL_{u}(\D)$.
Fix an $\ell$-adic lift $\tt$ of $\tau$ and write $\st=\L(\tt,v)$.
The representation $\st$ is in the same $\ell$-block as 
$\st_1,\st_2$, by Lemma \ref{C2}.
If we write $\pit$ for the $\ell$-adic discrete series representation of $\G$ 
whose transfer to $\H$ is $\st$, 
then it is enough to prove that $\pit$ is in the same $\ell$-block 
as $\pit_1$. 

In the remainder of the proof, it will be more convenient for us to deal with Speh 
representa\-tions rather than discrete series representations, as in \cite{MSj}. 
We thus apply the Zelevinski involution to $\pit$, $\pit_1$ and $\st$, $\st_1$ 
and thus get $\ell$-adic Speh representa\-tions.

Let us write $\st^*$ for the Zelevinski dual of $\st$.
Its reduction mod $\ell$ is the $\ell$-modular super-Speh representation 
$\Z(\tau,v)$, by \cite[Théorème~9.39]{MSc}. 
If we write $\pit^*=\Z(\widetilde{\a},t)$ for the Zelevinski dual of $\pit$, 
for some $t$ dividing $m$ and some cuspidal irreducible representation 
$\widetilde{\a}$ of $\GL_{m/t}(\D)$, then its reduction mod $\ell$ contains the 
Speh representation $\Z(\a,t)$ where $\a$ is an irreducible component~of the 
reduction mod $\ell$ of $\widetilde{\a}$
(see for instance \cite[Proposition~1.10]{MSj}).
The cuspidal representation~$\a$ need not be supercuspidal but, 
according to Proposition \ref{Coupure}, 
it can be written as $\Sp(\b,k)$ for $k=k(\a)$ and some 
supercuspidal irreducible representation $\b$.

We now look at the reduction mod $\ell$ of the Zelevinski dual of 
$\st_1$. 
It is $\Z(\rho_1,r_1)$ where $\rho_1$, the reduction mod $\ell$ of $\rt_1$,
can be written as $\Sp(\tau\chi,k_1)$ for some unramified character $\chi$.
By twisting $\pit_1$ by an unramified character of $\G$, 
we may assume that $\chi$ 
is trivial. 
According to \cite[Lemme~9.41]{MSc}, the represen\-ta\-tion $\Z(\rho_1,k_1)$ 
decomposes as a $\ZZ$-linear combination of products of the form 
\begin{equation*}
\Z(\tau\nu^{i_1},v_1)\tdt\Z(\tau\nu^{i_r},v_r)
\end{equation*}
with $v_1+\dots+v_r=v$ and $i_1,\dots,i_r\in\ZZ$, 
where $\nu$ stands for the absolute value of the reduced~norm,
as usual.
(For an explicit formula for this decomposition, 
see \cite[Sections~11 and~12]{MSj}.)
Thanks to the commutative diagram above, 
the reduction modulo $\ell$ of the Zelevinski dual of $\pit_1$ will be made of 
products of the form 
\begin{equation*}
\Z(\a\nu^{i_1},t_1)\tdt\Z(\a\nu^{i_r},t_r)
\end{equation*}
with $t_1+\dots+t_r=t$ and $i_1,\dots,i_r\in\ZZ$, 
all of whose irreducible subquotients have supercuspidal support inertially
equivalent to
$(\GL_w(\D)\tdt\GL_w(\D),\b\odo\b)$,
with $wkt=m$.
The result follows from Corollary \ref{C1}.
\end{proof}

\subsection{}

Proposition \ref{lLINKEDJL} implies that two complex discrete series representations $\pi_1,\pi_2$ of $\G$ are linked if their Jacquet--Langlands transfers are linked. Then Proposition \ref{MAINTHEOREM1} (together with Remark \ref{Solgrub}) induces a map
\begin{equation*}
\boldsymbol{\pi}_1 : \Ee_n(\F) \to \Ee_n(\F)
\end{equation*}
depending on $\G$,
where $\Ee_n(\F)$ is the set of $\F$-endo-classes of degree dividing $n$. 
More precisely, given~an endo-class $\TT\in\Ee_n(\F)$ and 
a complex discrete series representation $\s$ of $\H$ of endo-class $\TT$, 
the endo-class of the Jacquet--Langlands transfer 
of $\s$ to $\G$ depends only on $\TT$: 
we denote it $\boldsymbol{\pi}_1(\TT)$.

This map does not depend on  the choice of the isomorphisms 
$\iota_\ell$ for $\ell$ dividing \eqref{prodqi1}.

\begin{prop}
The map $\boldsymbol{\pi}_1$ is bijective. 
\end{prop}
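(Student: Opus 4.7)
The plan is to produce an explicit inverse to $\boldsymbol{\pi}_1$ by running the same construction on the $\G$-side of the Jacquet--Langlands correspondence. Specifically, for each $\TT'\in\Ee_n(\F)$, the value $\boldsymbol{\pi}_1^{\flat}(\TT')$ should be the endo-class of $\boldsymbol{\pi}(\pi)\in\Dd(\H,\CC)$ for any discrete series $\pi\in\Dd(\G,\CC)$ of endo-class $\TT'$. If this $\boldsymbol{\pi}_1^{\flat}$ is well-defined, it is automatically a two-sided inverse to $\boldsymbol{\pi}_1$ because $\boldsymbol{\pi}$ is itself a bijection on discrete series.

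Well-definedness requires two inputs. First, \emph{existence}: for each $\TT'\in\Ee_n(\F)$ of degree $g$, some discrete series of $\G$ has endo-class $\TT'$. Setting $m'=g/\gcd(g,d)$, one checks arithmetically that $m'\mid m$: writing $h=\gcd(g,d)$ and $d=hs$, the hypothesis $g\mid md$ gives $m'\mid ms$, and $\gcd(m',s)=1$ then forces $m'\mid m$. The theory of simple types for inner forms \cite{BrD,VS1,VS2,VS3,SeSt1} produces a cuspidal representation $\rho$ of $\GL_{m'}(\D)$ with endo-class $\TT'$: $m'$ is exactly the integer for which a field $\E$ realizing $\TT'$ admits an embedding in $\Mat_{m'}(\D)$ whose centralizer is a division algebra, so a maximal simple stratum based at $\E$ supports a maximal simple type whose compact induction is cuspidal with endo-class $\TT'$. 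Then $\L(\rho,m/m')$ is the desired discrete series of $\G$.

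Second, \emph{independence}: if $\pi_1,\pi_2\in\Dd(\G,\CC)$ share the endo-class $\TT'$, then by Theorem~\ref{MAINTHEOREM11} they are linked, and by Remark~\ref{Solgrub} one may choose a linking chain consisting of discrete series. Applying the converse direction of Proposition~\ref{lLINKEDJL}---that is, Corollary~\ref{lLINKEDJLcoro}, which states that the Jacquet--Langlands correspondence preserves the relation of being $\ell$-linked for essentially square integrable representations---to each link, one obtains a linking chain between $\boldsymbol{\pi}(\pi_1)$ and $\boldsymbol{\pi}(\pi_2)$ in $\H$. These are therefore linked, and hence have the same endo-class.

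The main obstacle is the existence step: unlike the rest of the argument, which is formal once the congruence machinery of earlier sections is in place, it depends on inputting the explicit classification of cuspidal simple types on inner forms together with the arithmetic of the Brauer class of $\D\otimes_\F\E$, which controls the minimal matrix size over $\D$ in which a given endo-class can be realized.
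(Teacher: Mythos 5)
Your strategy of constructing an explicit inverse $\boldsymbol{\pi}_1^{\flat}$ differs from the paper's, which instead observes that $\boldsymbol{\pi}_1$ preserves depth, restricts it to a self-map of each finite set $\Ee_n(\F,r)$ of endo-classes of bounded degree and fixed depth $r$, proves surjectivity of this restriction (using, implicitly, exactly the existence fact you spell out), and concludes bijectivity by finiteness. Your existence argument is correct and actually more careful than the paper, which simply writes ``choose $\pi\in\XA(\G,\CC)$ of endo-class $\TT$'' without justification; checking that $g/\gcd(g,d)$ divides $m$ and realizing $\TT'$ on a cuspidal of $\GL_{g/\gcd(g,d)}(\D)$ is the right way to fill that in.

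The problem is your independence step, which is circular: you invoke Corollary~\ref{lLINKEDJLcoro} to get that same endo-class on the $\G$-side forces same endo-class on the $\H$-side, but in the paper Corollary~\ref{lLINKEDJLcoro} is stated and derived \emph{after}, and \emph{as a consequence of}, the very Proposition you are trying to prove. Proposition~\ref{lLINKEDJL} by itself only gives the one-way implication ``same $\ell$-block of $\H$ implies same $\ell$-block of $\G$,'' because its proof rests on the Badulescu transfer map $\BLT:\RA(\H,\qlb)\to\RA(\G,\qlb)$ and its mod-$\ell$ counterpart $\BL$, both of which go from $\H$ to $\G$; there is no comparable transfer in the reverse direction available at this stage. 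You do not need the converse implication at all: your existence step already shows that every $\TT\in\Ee_n(\F,r)$ has a preimage under $\boldsymbol{\pi}_{1}$, so $\boldsymbol{\pi}_{1,r}$ is a surjective self-map of a finite set, hence a bijection, and then $\boldsymbol{\pi}_1$ is bijective. Once that is in hand, Corollary~\ref{lLINKEDJLcoro} follows, and only then is $\boldsymbol{\pi}_1^{\flat}$ legitimately well-defined.
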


\begin{proof}
This map preserves depth.
Thus, for any rational number $r\in\QQ_+$, we have a map
\begin{equation*}
\boldsymbol{\pi}_{1,r} : \Ee_n(\F,r) \to \Ee_n(\F,r)
\end{equation*}
where $\Ee_n(\F,r)$ the set of of $\F$-endo-classes of degree dividing $n$ and 
depth $r$, which we claim to be surjective.
Indeed, given $\TT\in\Ee_n(\F,r)$, let us choose 
$\pi\in\XA(\G,\CC)$ of endo-class $\TT$,
and let $\s$ be its Jacquet--Langlands transfer to $\H$. 
Then the endo-class of $\s$ is an antecedent of $\TT$ by 
$\boldsymbol{\pi}_1$. 
Since $\Ee_n(\F,r)$ is a finite set and $\boldsymbol{\pi}_{1,r}$ is 
surjective, it follows that 
$\boldsymbol{\pi}_{1,r}$ is bijective for all $r$, 
thus the map $\boldsymbol{\pi}_1$ is bijective.
\end{proof}

As a corollary, we have the following refinement of Proposition \ref{lLINKEDJL}. 

\begin{coro}
\label{lLINKEDJLcoro}
Let $\pit_1$ and $\pit_2$ be $\ell$-adic discrete series representations of 
$\G$, and write $\st_1$, $\st_2$ for their Jacquet--Langlands transfers to 
$\H$, respectively. 
Then $\st_1$, $\st_2$ are in the same $\ell$-block of $\H$
if and only if $\pit_1$, $\pit_2$ are in the same $\ell$-block of $\G$.
\end{coro}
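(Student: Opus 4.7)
The plan is to combine Proposition \ref{lLINKEDJL} with the bijectivity of $\boldsymbol{\pi}_1$ just established. The ``only if'' direction is exactly Proposition \ref{lLINKEDJL}, so all the work lies in the converse.

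Suppose $\pit_1, \pit_2$ are in the same $\ell$-block of $\G$. Since discrete series are simple, Remark \ref{ofcourse} supplies a common endo-class $\TT$, and by bijectivity of $\boldsymbol{\pi}_1$ the transfers $\st_1, \st_2$ share the endo-class $\SS := \boldsymbol{\pi}_1^{-1}(\TT)$ in $\H$. The correspondence $\plt$ restricts to a bijection between discrete series of $\G$ with endo-class $\TT$ and discrete series of $\H$ with endo-class $\SS$, and Proposition \ref{lLINKEDJL} says that this bijection sends pairs lying in a common $\H$-block to pairs lying in a common $\G$-block. Consequently we obtain a well-defined, and (by the bijectivity of $\plt$) surjective map
\[
\{\ell\text{-blocks of d.s.\ of } \H \text{ with e.c.\ } \SS\} \;\twoheadrightarrow\; \{\ell\text{-blocks of d.s.\ of } \G \text{ with e.c.\ } \TT\}.
\]
The corollary reduces to the injectivity of this map: indeed, given injectivity, the $\H$-blocks of $\st_1$ and of $\st_2$ both map to the common $\G$-block of $\pit_1, \pit_2$ and hence coincide.

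To establish injectivity I would use a cardinality argument based on Propositions \ref{LaSURJ} and \ref{beaufort}. These parametrize the $\ell$-blocks of simple irreducible $\ell$-adic representations of fixed endo-class in terms of $\Ga$-orbits of characters of $\kk^\times$ modulo reduction mod~$\ell$, where the structural data $(\kk, \Ga)$ depend only on numerical invariants of the endo-class (its degree, the residual degree of its tame parameter field, and the like). These numerical invariants are preserved under $\boldsymbol{\pi}_1$, so the two sides have the same finite number of $\ell$-blocks, and a surjection between finite sets of equal cardinality is a bijection.

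The main obstacle is verifying that $(\kk, \Ga)$ genuinely match for corresponding endo-classes $\TT$ and $\SS = \boldsymbol{\pi}_1^{-1}(\TT)$, i.e.\ that the extra dependence on the inner form washes out. Should this matching prove subtle, an alternative plan would be to argue injectivity directly, mirroring the argument in the proof of Proposition \ref{lLINKEDJL}: starting from equality of mod-$\ell$ inertial supercuspidal supports on the $\G$-side and unwinding the commutative diagram relating $\BLT$ and $\BL$ to force the same equality on the $\H$-side.
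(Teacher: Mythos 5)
Your outline — reduce to injectivity of a surjection between finite sets of $\ell$-blocks, then count — is the right idea and is very likely what the authors have in mind (they give no explicit proof, but the corollary sits immediately after the counting argument establishing bijectivity of $\boldsymbol{\pi}_1$). However, you have located the obstacle in slightly the wrong place, you do not resolve it, and your fallback plan would not work.

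First, the worry about ``the extra dependence on the inner form'' is unfounded. Unwinding Paragraph~\ref{Par5}, for a fixed endo-class $\TT$ of degree $g$ with $\ee$ the residue field of its tame parameter field (of cardinality $Q$), one always has $m'd'=n/g$, $|\kk|=Q^{n/g}$ and $\Ga=\Gal(\kk/\ee)$ of order $n/g$, regardless of $\D$. So, via Proposition~\ref{beaufort}, the number $N(\TT)$ of $\ell$-blocks of $\G$ containing a discrete series of endo-class $\TT$ equals $|\Ga\backslash\XL|$ and depends only on $\TT$, $n$ and $\ell$ — it is literally the same finite number whether computed on $\G$ or on $\H$. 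The genuine gap is elsewhere: your surjection goes from $\ell$-blocks of $\H$ with endo-class $\SS=\boldsymbol{\pi}_1^{-1}(\TT)$ to $\ell$-blocks of $\G$ with endo-class $\TT$, and since a priori $\SS\neq\TT$, the needed equality is $N(\SS)=N(\TT)$, which is not automatic: it requires that $\boldsymbol{\pi}_1$ preserves $\deg\TT$ and $|\ee|$, something not established at this point of the paper (indeed, if one knew $\boldsymbol{\pi}_1$ preserved all such invariants trivially, much of Section~7 would be unnecessary). Your assertion that ``these numerical invariants are preserved under $\boldsymbol{\pi}_1$'' is therefore exactly the unproved step.

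There are two ways to close it. (i) Prove invariance directly: $g=\gcd\{\d(\pi):\pi\in\Dd_0(\G,\TT)\}$ and the torsion numbers appearing in $\Dd_0(\G,\TT)$ recover $e(\E/\F)$; both are JL-invariant by~\cite{BHJL3}, which pins down $g$ and $Q$. (ii) More in the spirit of the paper: do not fix the endo-class. The per-endo-class surjection gives $N(\boldsymbol{\pi}_1^{-1}(\TT))\geqslant N(\TT)$ for every $\TT$; since $\boldsymbol{\pi}_1$ is a permutation of the finite set $\Ee_n(\F,r)$ for each depth $r$, following the $\boldsymbol{\pi}_1^{-1}$-orbit of $\TT$ back to itself forces equality everywhere along the cycle. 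This is the same finiteness trick the paper just used for bijectivity of $\boldsymbol{\pi}_1$, and it avoids case (i) entirely.

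Your alternative plan — mirroring the proof of Proposition~\ref{lLINKEDJL} to go from the $\G$-side to the $\H$-side — would not work. The commutative square relates $\BLT$ and $\BL$, both of which go from $\H$ to $\G$, and $\BLT$ is far from injective on $\RA(\H,\qlb)$ (it kills every product involving a factor of degree not divisible by $d$). So knowing that $\rl(\pit_1)$ and $\rl(\pit_2)$ have the required common supercuspidal support gives you no handle on $\rl(\st_1)$, $\rl(\st_2)$; you cannot invert the diagram. Stick with the counting argument.
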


Allowing $\ell$ to vary, we deduce 

\begin{theo}
\label{TheoremC}
Two complex discrete series representations of\/ $\G$ are linked if and only 
if their transfers to\/ $\H$ are linked.
\end{theo}

Recall that the parametric degree of 
a cuspidal representation of $\G$
has been defined in \ref{P21}.

\begin{prop}\label{propintro}
For every complex discrete series representation of\/ $\G$, there is a 
cuspidal 
complex representation of\/ $\G$ with the same endo-class and with parametric 
degree\/ $n$. 
\end{prop}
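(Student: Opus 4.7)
Write $\pi=\L(\rho,r)$ for some divisor $r$ of $m$ and cuspidal irreducible representation $\rho$ of $\GL_{m/r}(\D)$, whose endo-class $\TT$ has degree $g$ dividing $(m/r)d=n/r$; in particular $g\mid n$. The plan is to produce the desired cuspidal $\pi'$ of $\G$ with endo-class $\TT$ and $\delta(\pi')=n$ by constructing, in the framework of Paragraph~\ref{Par5}, an extended maximal simple type attached to a sufficiently regular character of $\mult\kk$.

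First I realise $\TT$ by a simple stratum $[\aa,\b]$ in $\Mat_m(\D)$ with $\bb=\aa\cap\B$ maximal. Starting from a simple stratum $[\aa_0,\b]$ in $\Mat_{m/r}(\D)$ underlying $\rho$, with $\aa_0\cap\B_0$ maximal in $\B_0\simeq\Mat_{m_0}(\D')$, I use the block-diagonal embedding $\Mat_{m/r}(\D)\hookrightarrow\Mat_m(\D)$ to view $\b$ as an element of $\Mat_m(\D)$, whose commutant is then $\B\simeq\Mat_{rm_0}(\D')$. I take $\aa$ to be the unique maximal order of $\Mat_m(\D)$ normalised by $\F[\b]^\times$ such that $\aa\cap\B$ is the standard maximal order of $\B$. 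Fixing a maximal $\b$-extension $\k$, the notation of Paragraph~\ref{Par5} gives $\J/\J^1\simeq\GL_{m'}(\dd)$ with $m'=rm_0$, and Galois groups $\Sigma=\Gal(\kk/\dd)\subseteq\Ga=\Gal(\kk/\ee)$ of respective orders $m'$ and $m'd'$, where $d'=[\dd:\ee]$.

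Next I pick a character $\car\in\X$ with trivial $\Ga$-stabiliser. The characters with non-trivial $\Ga$-stabiliser lie in the union, over proper intermediate fields $\ee\subseteq L\subsetneq\kk$, of those factoring through $\mult{L}$, and the total cardinality of this union is strictly smaller than $|\X|=|\mult\kk|$, so such \emph{$\Ga$-regular} characters exist. The inclusion $\Sigma\subseteq\Ga$ makes $\car$ in particular $\Sigma$-regular, so $\dd[\car]=\kk$ and $\bs(\car)$ is a supercuspidal representation of $\GL_{m'}(\dd)$. In the setup of Paragraph~\ref{Par5} this corresponds to the case $r=1$: the simple supertype $\bl(\car)=\k\otimes\bs(\car)$ is a maximal simple type on $\J$. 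Extending $\bl(\car)$ to the normaliser $\BJ$ of $\J$ in $\G$ and compactly inducing to $\G$ produces a cuspidal complex representation $\pi'$ of $\G$ of endo-class $\TT$.

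It remains to verify $\delta(\pi')=n$. By the Bushnell--Henniart description of parametric degree via extended maximal simple types (\cite[Section~2]{BHJL3}), the parametric degree of $\pi'$ coincides with $g$ times the cardinality of the $\Ga$-orbit of $\car$; our $\Ga$-regularity assumption then gives $|\Ga\cdot\car|=|\Ga|=m'd'$, whence $\delta(\pi')=gm'd'=n$. The main technical point is this identification of $\delta(\pi')$ from the simple-type data, which encapsulates the core of the Bushnell--Henniart framework and its extension to inner forms; the existence of a $\Ga$-regular character and the simple-type construction itself are routine applications of the machinery reviewed in Section~3.
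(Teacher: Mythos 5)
Your proof is correct and follows essentially the same route as the paper: realise $\TT$ by a maximal simple stratum in $\Mat_m(\D)$, take a $\Gal(\kk/\ee)$-regular character $\car\in\X$, and form the corresponding cuspidal representation, noting that $\Ga$-regularity gives orbit size $m'd'=n/g$ and hence parametric degree $n$. The only difference is cosmetic: where you argue by a cardinality count that some character has trivial $\Ga$-stabiliser (a claim which ultimately rests on $\X$ being cyclic), the paper simply takes $\car$ to be a generator of the cyclic group $\X$.
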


\begin{proof}
Let $\pi$ be a complex discrete series representation of $\G$ with endo-class 
$\TT$. To find~a complex cuspidal representation 
with same endo-class and parametric degree $n$, we need to find a 
$\Gal(\kk/\dd)$-regular complex character 
$\car\in\X$ which is also $\Gal(\kk/\ee)$-reg\-ular.
The latter implies the former, so let us find a $\Gal(\kk/\ee)$-regular 
character $\car\in\X$.
For this, it is enough 
to choose for $\car$ a generator of the cyclic group $\X$. 
\end{proof}

As an immediate consequence, we get the following.

\begin{theo}
\label{unseulTT}
Given an endo-class $\TT$ in $\Ee_n(\F)$,
if there is a complex cuspidal representation $\rho$ of $\G$ 
with endo-class $\TT$ and parametric degree $n$ such that 
$\boldsymbol{\pi}(\rho)$ has endo-class $\TT$, then 
$\boldsymbol{\pi}_1(\TT)$ is equal to $\TT$.
\end{theo}

\begin{coro}
\label{coroesstame}
Assume that $\TT\in\Ee_n(\F)$ is essentially tame, that is, its ramification 
order~is prime to $p$.
Then, for all complex discrete series representation $\pi$ of $\G$ with endo-class 
$\TT$,~the~re\-presentations $\pi$ and ${}_{{\rm JL}}\pi$ have the same endo-class. 
\end{coro}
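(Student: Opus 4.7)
The plan is to reduce the corollary to the essentially tame cuspidal case of parametric degree $n$, already handled by Bushnell--Henniart in \cite{BHJL3}, via the machinery built in this section.

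\textbf{Step 1 (Reduction to a single witness).} By Theorem~\ref{unseulTT}, to prove that $\pi$ and ${}_{{\rm JL}}\pi$ have the same endo-class for every complex discrete series $\pi$ of $\G$ with endo-class $\TT$, it suffices to exhibit one complex cuspidal representation $\rho$ of $\G$ such that $\rho$ has endo-class $\TT$, parametric degree $\d(\rho)=n$, and $\boldsymbol{\pi}(\rho)$ has endo-class $\TT$. (Once $\boldsymbol{\pi}_1(\TT)=\TT$, the endo-class invariance follows for \emph{all} discrete series with endo-class $\TT$ by the very definition of $\boldsymbol{\pi}_1$.)

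\textbf{Step 2 (Existence of a cuspidal witness of parametric degree $n$).} Apply Proposition~\ref{propintro}: there exists a complex cuspidal representation $\rho$ of $\G$ with the same endo-class $\TT$ and with $\d(\rho)=n$. Concretely, one chooses any parametrizing character $\car\in\X$ which is a generator of the cyclic group $\X$; the resulting $\rho$ is automatically $\Gal(\kk/\ee)$-regular and hence has maximal parametric degree.

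\textbf{Step 3 (Essential tameness transfers to $\rho$).} Since $\TT$ has ramification order prime to~$p$ and $\rho$ is a cuspidal representation with endo-class $\TT$ and $\d(\rho)=n$, the ratio $\d(\rho)/t(\rho)$ is divisible only by the ramification and residual invariants of $\TT$; in particular it is prime to~$p$. Thus $\rho$ is essentially tame in the sense of Bushnell--Henniart \cite{BHJL3} and is of parametric degree~$n$, placing it exactly in the situation treated in loc.~cit.

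\textbf{Step 4 (Apply Bushnell--Henniart).} The explicit description of the Jacquet--Langlands correspondence for essentially tame cuspidal representations of parametric degree $n$ given by Bushnell and Henniart in \cite{BHJL3} shows that $\boldsymbol{\pi}(\rho)$ has the same endo-class as $\rho$, namely $\TT$. Combined with Step~1, this yields $\boldsymbol{\pi}_1(\TT)=\TT$, whence $\pi$ and ${}_{{\rm JL}}\pi$ have endo-class $\TT$ for every discrete series $\pi$ of $\G$ of endo-class $\TT$.

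The only potentially delicate point is Step~3, i.e.\ verifying that the two notions of ``essentially tame'' (the one used here, for endo-classes, via ramification order prime to $p$; and the one of Bushnell--Henniart, for cuspidal representations, via $\d(\pi)/t(\pi)$ prime to $p$) match up for a cuspidal of parametric degree $n$ and endo-class $\TT$. This is essentially a bookkeeping of the invariants attached to $\TT$ and is standard in the theory of endo-classes.
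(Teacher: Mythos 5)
Your proof is correct and follows essentially the same route as the paper, which simply invokes Theorem~\ref{unseulTT} together with the main result of \cite{BHJL3}; you have usefully unpacked the implicit steps (the witness via Proposition~\ref{propintro}, the matching of the two notions of ``essentially tame''). Step~3 could be stated more crisply --- for a cuspidal $\rho$ of parametric degree $n$, the equality of the two essentially tame conditions (on $\rho$ and on its endo-class) is established directly in \cite{BHJL3} rather than by a divisibility bound as you phrase it --- but this does not affect the validity of the argument.
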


\begin{proof}
This follows from Theorem \ref{unseulTT} together with the main result of 
\cite{BHJL3}. 
\end{proof}

\begin{rema}
Let $\TT\in\Ee_n(\F)$ and write $\TT'=\boldsymbol{\pi}_1(\TT)\in\Ee_n(\F)$. 
Let $\rho$ be a cuspidal irreducible representation of $\G$ with endo-class 
$\TT$ and parametric degree $n$.
Its Jacquet--Langlands transfer to $\GL_n(\F)$ is a cuspidal representation 
denoted $\s$.
Then, for any $a\>1$, the discrete series~repre\-sentation $\L(\rho,a)$ of 
$\GL_{am}(\D)$ has endo-class $\TT$, and its 
transfer $\L(\s,a)$ to $\GL_{an}(\F)$ has endo-class $\TT'$. 
Thus $\boldsymbol{\pi}_1(\TT)$ does not depend on the choice of the integer 
$n\>1$ such that $\TT\in\Ee_n(\F)$.
\end{rema}

\section{Reduction to the maximal totally ramified case}

We continue with the previous notation, so that $\G=\GL_m(\D)$ and 
$\H=\GL_{n}(\F)$. 
In this~sec\-tion, we closely follow the ideas of~\cite[Section~6]{BHJL3} to 
make a further reduction to the maximal~to\-tal\-ly ramified case 
{(see Paragraph 1.1).} 
All representations in this section are com\-plex. 

\subsection{}

Let $\pi$ be a cuspidal (complex) representation of $\G$ with parametric degree $n$.
Let $(\BJ,\bl)$ be an extended maximal simple type of $\G$ contained in $\pi$
\cite[\S3.1 and Théorème~3.11]{MSt},
attached to a simple stratum $[\aa,\b]$ and a simple character $\t$.
We write $\B$ for the centralizer of $\b$ in $\Mat_m(\D)$, so that 
$\B \simeq \Mat_{m'}(\D')$, for some integer $m'\>1$ and $\F[\b]$-division 
algebra $\D'$. 
We fix a maximal unramified extension $\L$ of $\F[\b]$ in $\B$,
and write $\K$ for the maximal unramified subextension of $\L$ over $\F$.

We fix a root of unity $\zeta\in\K$ of order relatively prime to $p$
such that $\K=\F[\zeta]$. 
Write $\G_\K$ for the centralizer of $\K$ in $\G$.
Let $u$ be a pro-unipotent, elliptic regular element of $\G_\K$
in the sense of~\cite[Paragraph~1.6]{BHJL3}.
The element $h=\zeta u$ then lies in the set $\Ger$ of elliptic regular
elements of $\G$, so we have 
\begin{equation*}
\tr\ \pi(h) = \sum\limits_{x\in\G/\BJ}\tr\ \bl(x^{-1}hx)
\end{equation*}
as in~\cite[(6.3.1)]{BHJL3}. 
Write $\J=\J(\aa,\b)=\BJ\cap\aa^\times$.
A coset $x\BJ$ can only contribute to the sum if we have $x^{-1}hx\in\BJ$ or, 
equivalently, $x^{-1}hx\in\J$. 

Write $\Psi$ for the Galois group of $\K/\F$ and $\Ga$ for that of 
$\L/\F[\b]$. 
Restriction of operators~identi\-fies $\Ga$ with a subgroup of $\Psi$. 
Write $\Psit$ for the unique subgroup of $\Ga$ (thus of $\Psi$) 
of order $m'$.
As in~\cite[(6.3.2)]{BHJL3}, by using~\cite[6.3~Proposition]{BHJL3} we have
\begin{equation*}
\tr\ \pi(\zeta u) = \sum\limits_{\a\in\Psi/\Psit} \sum\limits_{y\in\G_\K/\BJ_\K}
\tr\ \bl(y^{-1}\zeta^\a u^\a y)
\end{equation*}
where $\BJ_\K=\BJ\cap\G_\K$.

Let us fix a uniformizer $\varpi_\F$ of $\F$.
We choose an irreducible representation $\bk$ of $\BJ$ such that:
\begin{enumerate}
\item
the restriction of $\bk$ to $\J$ is a $\b$-extension of $\t$;
\item
the character $\det(\bk)$ has order a power of $p$;
\item
the automorphism $\bk(\varpi_\F)$ is the identity.
\end{enumerate}
Note that such a representation is not unique.
We now write
\begin{equation*}
\bltt = \Hom_{\J^1}(\bk,\bl)
\end{equation*}
which carries an action of $\BJ$ given by 
$g\cdot f=\bl(g)\circ f\circ\bk(g)^{-1}$ for $g\in\BJ$ and $f\in\bltt$. 
This repre\-sentation is irreducible and trivial on $\J^1=\J^1(\aa,\b)$, 
and we have the decomposition $\bl=\bk\otimes\bltt$. 
As in~\cite[(6.4.1)]{BHJL3} this gives us
\begin{equation*}
\tr\ \pi(\zeta u) = \sum\limits_{\a\in\Psi/\Psit} \tr\ \bltt(\zeta^\a)
\sum\limits_{y\in\G_\K/\BJ_\K} \tr\ \bk(y^{-1}\zeta^\a u^\a y).
\end{equation*}
We are now going to interpret the sum over $\G_\K/\BJ_\K$ 
as the trace of a cuspidal irreducible representation of $\G_\K$.

\subsection{}

Write $\t_\K$ for the restriction of $\t$ to $\H^1(\aa,\b)\cap\G_\K$,
which is the interior $\K/\F$-lift of the simple character $\t$ in the sense of 
\cite[Section~5]{BSS}. 
The group $\BJ_\K$ is also the normalizer of $\t_\K$ in $\G_\K$.
We choose an irreducible representation $\bk_\K$ of $\BJ_\K$ such that:
\begin{enumerate}
\item
the restriction of $\bk_\K$ to $\J_\K$ is a $\b$-extension of $\t_\K$;
\item
the character $\det(\bk_\K)$ has order a power of $p$;
\item
the automorphism $\bk_\K(\varpi_\F)$ is the identity.
\end{enumerate}
Again, such a choice may not be unique.
The pair $(\BJ_\K,\bk_\K)$ is an extended maximal simple type in $\G_\K$.
It thus defines a cuspidal irreducible representation $\rho$ of 
$\G_\K$. 
By~\cite[(3.4.3) and~(5.6.2)]{BHETLC3},
there is a sign $\epsilon\in\{-1,+1\}$ such that
\begin{equation*}
\tr\ \bk(y^{-1}\zeta^\a u^\a y) = \epsilon\cdot\tr\ \eta_\K(y^{-1}\zeta^\a u^\a y)
\end{equation*}
where $\eta_\K$, which denotes the restriction of $\bk_\K$ to 
the pro-$p$-subgroup $\J^1\cap\G_\K$, 
is the Heisenberg representation of $\t_\K$.
As in~\cite[(6.4.2)]{BHJL3} this gives us
\begin{equation}
\label{ZUT}
\tr\ \pi(\zeta u) = \epsilon\sum\limits_{\a\in\Psi/\Psit} \tr\ \bltt(\zeta^\a) 
\ \tr\ \rho^{\a^{-1}}(u).
\end{equation}
We do not know whether a result similar to~\cite[6.5~Lemma]{BHJL3} holds,
that is, we do not know~whe\-ther the $\Psi$-stabilizers of $\rho$ and of its 
inertial class are both equal to $\Ga$. 
However, let $\Psi_0$ denote~the stabilizer in $\Psi$ of the inertial class of $\rho$ 
and let $\X_0$ be a set of representatives for $\Psi$ mod $\Psi_0$. 
For $\g\in\Psi_0$ there is an unramified character $\chi_\g$ 
of $\G_\K$ such that 
$\rho^{\g^{-1}}\simeq\rho\chi_\g$.
Since $u$ is pro-unipotent (thus compact) we have 
$\chi_\g^{\a^{-1}}(u)=1$, for all $\a\in\Psi/\Psit$.
Therefore~\eqref{ZUT} can be rewritten as 
\begin{equation}
\label{FORMULA0}
\tr\ \pi(\zeta u) = \epsilon \sum\limits_{\a\in\X_0} 
\tr\ \rho^{\a^{-1}}(u) 
\sum\limits_{\g\in\Psi_0/\Psit} 
\tr\ \bltt(\zeta^{\a\g}) 
\end{equation}
Note that the map
\begin{equation}
\label{DEFWAl}
{w} : \zeta \mapsto \sum\limits_{\g\in\Psi_0/\Psit} \tr\ \bltt(\zeta^{\g}) 
\end{equation}
is not identically zero on the set of $\K/\F$-regular roots of unity, 
by~\cite[Theorem~1.1(ii)]{SZ}.
{Thus there is an $\a\in\X_0$ such that the coefficient 
$w(\zeta^\a)$ in \eqref{FORMULA0} is nonzero.}

\subsection{}

Now write $\pi'$ for the Jacquet--Langlands transfer of $\pi$ to $\H$.
Since $\pi$ has parametric degree $n$, 
the torsion number $t(\pi)$ 
is equal to the degree of $\K$ over $\F$.
We now do for $\pi'$ what we did for $\pi$.

Let $(\BJ',\bl')$ be an extended maximal simple type of $\H$ contained in $\pi'$, 
attached to a simple~stra\-tum $[\aa',\b']$. 
Write $\B'$ for the centralizer of $\b'$ in $\Mat_n(\F)$,
fix a maximal unramified extension~$\L'$ of $\F[\b']$ in $\B'$
and write $\K'$ for the maximal unramified sub\-ex\-tension of $\L'$ over $\F$.
The relation $t(\pi)=t(\pi')$, together with the fact that $\pi'$ also has 
parametric degree $n$, implies that $\K'$ and $\K$ have the same degree over 
$\F$. 
Therefore, we may identify the maximal unramified sub\-ex\-tension of $\L'/\F$ 
with $\K$.

We have an analogue $\bltt'$ of $\bltt$ and an analogue $\rho'$ of $\rho$ in 
the argument of the previous paragraph so that we get 
\begin{equation*}
\tr\ \pi'(\zeta u') = \epsilon' \sum\limits_{\a'\in\X'_0} 
\tr\ \rho'^{{\a'}^{-1}}(u') 
\sum\limits_{\g'\in\Psi'_0/\Psit'} \tr\ \bltt'(\zeta^{\a'\g'}) 
\end{equation*}
where $\zeta\in\K$ is as above,
$u'$ is a pro-unipotent elliptic regular element
of the centralizer $\H_\K$~of~$\K$ in $\H$, 
$\epsilon'\in\{-1,+1\}$ is a sign and the subgroups
$\Psit',\Psi'_0$ and $\X_0'$ are defined as in the~previous~para\-graph. 
If $\zeta u'$ is chosen to have the same reduced 
characteristic polynomial over $\F$ as~$\zeta u$,~this 
is equal to $(-1)^{n-m}\cdot\tr\ \pi(\zeta u)$,
by the trace relation characterizing the Jacquet--Lang\-lands corres\-pondence.
We thus get: 
\begin{equation*}
\epsilon' \sum\limits_{\a'\in\X'_0} w'(\zeta^{\a'})\ \tr\ \rho'^{{\a'}^{-1}}(u')
=(-1)^{n-m}\cdot\epsilon 
\sum\limits_{\a\in\X_0} {w}(\zeta^{\a})\ \tr\ \rho^{\a^{-1}}(u) 
\end{equation*}
where the function $w$ and its analogue $w'$ are defined by \eqref{DEFWAl}.

We apply~\cite[6.6~Lemma]{BHJL3} (note that $\rho$ has maximal parametric 
degree since $\L/\K$ is maximal). 
The $\rho'^{\a'^{-1}}$, $\a'\in\X'_0$, are not unramified twists of each other, 
and the same holds for the~Jacquet--Langlands 
transfers to $\H_\K$ of the $\rho^{\a^{-1}}$, $\a\in\X_0$.
Thanks to linear independence of characters, it fol\-lows that there is a 
$\a\in\Psi$ such that
\begin{equation*}
\boldsymbol{\pi}_\K(\rho^{\a^{-1}}) = \rho'\chi
\end{equation*}
for some unramified character $\chi$ of $\H_\K$,
where $\boldsymbol{\pi}_\K$ is the local Jacquet--Langlands correspondence 
from $\G_\K$ to $\H_\K$. 

Assume now that $\boldsymbol{\pi}_\K$ preserves $\K$-endo-classes in the 
maximal totally ramified case.
Then~the representations $\rho^{\a^{-1}}$ and $\rho'$ have the same 
$\K$-endo-class.
But the $\K$-endo-class of $\rho^{\a^{-1}}$(respective\-ly, of~$\rho'$)
is a $\K/\F$-lift of the $\F$-endo-class of $\pi$ 
(respectively, of $\pi'$) in the sense of~\cite{BHLTL1}. 
Applying the restriction map from $\Ee(\K)$ onto $\Ee(\F)$,
we get that $\pi$ and $\pi'$ have the same $\F$-endo-class.~Thus 
we have proved Theorem~A of the introduction:

\begin{theo}
\label{MainTheoremA}
Assume that, for all $\F$ and $n$, and all maximal totally ramified, 
cusp\-idal~irre\-du\-cible complex representations $\rho$ of $\G$, 
the representations $\rho$ and
$\boldsymbol{\pi}(\rho)$ have the same endo-class.
Then the map $\boldsymbol{\pi}_1$ is the identity.
\end{theo}

\section{Explicit Jacquet--Langlands correspondence up to unramified twist} 
\label{Sec8}

Now let us fix~an endo-class $\TT\in\Ee_n(\F)$ and suppose that 
$\boldsymbol{\pi}_1(\TT)=\TT$. 
Write $\Dd_0(\G,\TT)$~for~the set of inertial classes of dis\-cre\-te series 
representations
of $\G$ with endo-class $\TT$.
The local Jac\-quet-Langlands correspondence~\eqref{LJLC} thus
induces a bijective map
\begin{equation*}
\boldsymbol{\pi}_0 : \Dd_0(\G,\TT) \to \Dd_0(\H,\TT).
\end{equation*}
The cuspidal support induces a bijection between $\Dd_0(\G,\TT)$ 
and the set of inertial classes of~simple supercuspidal pairs of $\G$ 
with endo-class $\TT$. 
Thanks to~\eqref{PARISSC}
the~sets $\Dd_0(\G,\TT)$~and $\Dd_0(\H,\TT)$ are 
parametrized by the set $\Ga\backslash\X$ of 
$\Ga$-orbits of characters of $\kk^\times$.
The bijection~$\boldsymbol{\pi}_0$~thus turns into a permutation 
$\Upsilon$ of $\Ga\backslash\X$, that we would like to describe. 
The purpose of Propo\-sition~\ref{ExpliUpsi} below is to show that, 
in a certain sense, by considering various $m\>1$ such that $md$
is divisible by the degree of $\TT$,
one can reduce the computation of $\Upsilon([\car])$ to the case where 
$\car$ is suitably regular.

\subsection{}
\label{vlad}

We fix a simple stratum $[\aa,\b]$ in $\Mat_m(\D)$ such that 
$\bb=\aa\cap\B$ is maximal in $\B$, 
together with~a simple character $\t\in\Cc(\aa,\b)$
with endo-class $\TT$, 
and a $\b$-exten\-sion $\k$ of $\t$.~The integer $m'$ coming from~\eqref{ISOB} is
$m' = m {(d,g)}/{g}$, where $g$ denotes the degree of $\TT$.
Write $\X$ for the group of complex characters of 
$\kk^\times$.
Thanks to Proposition~\ref{LaSURJ} (see also~\eqref{PARISSC})
we have a bijective map
\begin{equation}
\label{bigPI}
\begin{array}{ccc}
\Ga\backslash\X & \to & \Dd_0(\G,\TT) \\
{[\car]} & \mapsto & \Om(\k,\car) 
\end{array}
\end{equation}
where $\Om(\k,\car)$ is the inertial class of discrete series 
representations of $\G$ that contain the simple type $\bl(\car)$.

Similarly, we choose a maximal simple character $\t'\in\Cc(\aa',\b')$
in $\H$ with endo-class $\TT$, 
and a max\-imal $\b$-exten\-sion $\k'$ of $\t'$. 
We get a bijection
$[\car]\mapsto\Om(\k',\car)$ between $\Ga\backslash\X$ and $\Dd_0(\H,\TT)$.

Let $\Upsilon$ be the unique bijective map such that the diagram 
\begin{equation*}
\begin{CD}
\Ga\backslash\X @>{\Upsilon}>> \Ga\backslash\X \\
@VVV @VVV \\
\Dd_0(\G,\TT) @>>{\boldsymbol{\pi}_0}> \Dd_0(\H,\TT) \\
\end{CD}
\end{equation*}
is commutative, where the vertical maps are given by~\eqref{bigPI} and its 
analogue for $\H$. 
It depends~on the choice of the maximal $\b$-extensions $\k$ and $\k'$.
By Proposition~\ref{AdeleGodel} and Corollary~\ref{lLINKEDJLcoro}, 
we have the following fact.

\begin{prop}
\label{Upsilonmodl}
For any prime number $\ell$,
the bijection $\Upsilon$ is compatible with 
taking $\ell$-regu\-lar parts.
More precisely, 
the $\Ga$-orbits of 
$\car,\b\in\X$ have the same $\ell$-regular part 
if~and only if $\Upsilon([\car])$ and 
$\Upsilon([\b])$ have the same $\ell$-regular part. 
\end{prop}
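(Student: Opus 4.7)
The plan is to chain two applications of Proposition~\ref{AdeleGodel} together with Corollary~\ref{lLINKEDJLcoro}, transferring the statement from parametrizing classes to $\ell$-linking on the $\G$-side, then across the Jacquet--Langlands correspondence to the $\H$-side, and then back to parametrizing classes.

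Fix orbits $[\car], [\b] \in \Ga\backslash\X$ and choose discrete series representations $\pi_\car, \pi_\b$ of $\G$ containing the simple types $\bl(\car), \bl(\b)$ respectively; by Definition~\ref{defparaclass} these have parametrizing classes $[\car], [\b]$ and represent the inertial classes $\Om(\k,\car), \Om(\k,\b)$. Proposition~\ref{AdeleGodel} then turns the condition $[\car]_\ell = [\b]_\ell$ into the condition that $\pi_\car$ and $\pi_\b$ are $\ell$-linked. Similarly, I would choose $\s_\car, \s_\b \in \Dd(\H,\CC)$ containing the simple types $\bl(\Upsilon([\car])), \bl(\Upsilon([\b]))$ computed relative to $\k'$, so that a second application of Proposition~\ref{AdeleGodel} on the $\H$-side translates $\Upsilon([\car])_\ell = \Upsilon([\b])_\ell$ into the condition that $\s_\car$ and $\s_\b$ are $\ell$-linked.

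It then remains to show that $\pi_\car, \pi_\b$ are $\ell$-linked if and only if $\s_\car, \s_\b$ are $\ell$-linked. By the commutative diagram defining $\Upsilon$, the Jacquet--Langlands transfer $\boldsymbol{\pi}(\pi_\car)$ lies in $\Om(\k',\Upsilon([\car]))$, hence is inertially equivalent to $\s_\car$, and likewise for $\pi_\b$. Since $\ell$-linking is defined through the mod $\ell$ inertial supercuspidal support, it is manifestly invariant under unramified twists, so the claim reduces to showing that $\pi_\car, \pi_\b$ are $\ell$-linked iff $\boldsymbol{\pi}(\pi_\car), \boldsymbol{\pi}(\pi_\b)$ are $\ell$-linked. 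Fixing $\iota_\ell : \CC \simeq \qlb$, this is equivalent via Definition~\ref{defllinked} to the statement that $\iota_\ell^*\pi_\car, \iota_\ell^*\pi_\b$ lie in the same $\ell$-block of $\G$ iff their images under the $\ell$-adic Jacquet--Langlands correspondence lie in the same $\ell$-block of $\H$, which is precisely Corollary~\ref{lLINKEDJLcoro}. Chaining the three equivalences yields the proposition.

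I do not anticipate any serious obstacle: all the heavy lifting has been done in the preceding sections, and the argument is a formal juxtaposition of (i) the parametrization of inertial classes of discrete series by $\Ga$-orbits of characters together with its compatibility with $\ell$-linking (Proposition~\ref{AdeleGodel}), (ii) the invariance of $\ell$-linking under unramified twisting, and (iii) the preservation of $\ell$-blocks by the Jacquet--Langlands correspondence (Corollary~\ref{lLINKEDJLcoro}). The only point requiring minor care is the inertial ambiguity in the representatives $\pi_\car$ and $\s_\car$, which is absorbed by (ii).
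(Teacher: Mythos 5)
Your proof is correct and follows exactly the route the paper takes: the paper simply states that Proposition~\ref{Upsilonmodl} follows from Proposition~\ref{AdeleGodel} and Corollary~\ref{lLINKEDJLcoro}, and your argument is precisely the straightforward expansion of that assertion, chaining the parametrizing-class/$\ell$-linking dictionary on each side with the invariance of $\ell$-blocks under the Jacquet--Langlands correspondence. The handling of the inertial ambiguity via the twist-invariance of the mod-$\ell$ inertial supercuspidal support is the right way to tie up the only loose end.
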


Proposition~\ref{Upsilonmodl} suggests that,
with a suitable choice of $\ell$,
it may be possible to deduce $\Upsilon([\car])$ from the knowledge of 
$\Upsilon([\b])$.
We will illustrate this idea in Proposition~\ref{ExpliUpsi} below.

\subsection{}

We first give another property of the map $\Upsilon$. 
Set $n'=n/g=m'd'$.
Given $\car\in\X$, let $f$ be the cardinality of its $\Ga$-orbit, 
and write
\begin{equation}
\label{DEFSDA}
s(\car) = s_\D([\car]) = \frac{d'}{(f,d')}.
\end{equation}
Recall that $d'$ is the degree of $\dd$ over $\ee$ (the residue field of 
$\F[\b]$), 
thus we have $d'=d/(d,g)$. 
Note that the cardinality of its $\Gal({\kk}/\dd)$-orbit is equal to 
$f/(f,d')$, which was denoted by $\u$ in paragraph~\ref{Par5}.

\begin{defi}
We call the integer $f$ the \emph{parametric degree} of $\car\in\X$. 
\end{defi}

This is related to the notion of parametric degree for a discrete series 
representation as follows: 
any discrete series representation in 
$\Om(\k,\car)$ has parametric degree $fg$. 

Since the local Jacquet Langlands correspondence preser\-ves the parametric
degree~\cite{BHJL3} we have the following result. 

\begin{lemm}
\label{conspdf}
For all $\car\in\X$, the parametric degrees of $[\car]$ and $\Upsilon([\car])$ 
are equal.
\end{lemm}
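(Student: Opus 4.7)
The plan is to reduce the statement to the already-known invariance of the parametric degree of discrete series representations under the Jacquet--Langlands correspondence.

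First, I would pick a discrete series representation $\pi\in\Om(\k,\car)$ and use the fact recorded just before the lemma: any such $\pi$ has parametric degree $fg$, where $f=|[\car]|$ and $g$ is the degree of $\TT$. This identity is what links the combinatorial invariant $f$ of a $\Ga$-orbit of characters to a representation-theoretic invariant of the discrete series in the corresponding inertial class, and it is the only substantive ingredient on the $\G$-side.

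Next, I would transport to $\H$: since $\boldsymbol{\pi}_1(\TT)=\TT$, the representation $\boldsymbol{\pi}(\pi)$ lies in $\Dd(\H,\CC)$ with endo-class $\TT$, and its inertial class is, by the very definition of $\Upsilon$, equal to $\Om(\k',\car')$ for any character $\car'$ with $[\car']=\Upsilon([\car])$. Applying the same fact on the $\H$-side, the parametric degree of $\boldsymbol{\pi}(\pi)$ equals $f'g$, where $f'$ is the cardinality of the $\Ga$-orbit $\Upsilon([\car])$.

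Finally, I would invoke the invariance of the parametric degree under the local Jacquet--Langlands correspondence (\cite{BHJL3}, also used repeatedly in the introduction), which gives $fg=f'g$, and hence $f=f'$. There is no real obstacle here; the content of the lemma is entirely in translating the already recorded identity ``parametric degree of a discrete series in $\Om(\k,\car)$ equals $fg$'' through the commutative diagram defining $\Upsilon$ and then quoting the invariance theorem.
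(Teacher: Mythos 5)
Your proof is correct and follows exactly the same route as the paper, which simply invokes the invariance of the parametric degree under the Jacquet--Langlands correspondence together with the preceding identity (parametric degree of a discrete series in $\Om(\k,\car)$ equals $fg$); you have merely written out the one-line argument in fuller detail.
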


Note that the set $\Om(\k,\car)$~is made of cuspidal representations 
with cuspidal Jacquet--Lang\-lands transfers if and only if 
$f=n'$, 
that is, if and only if $\car$ is $\ee$-regular. 
Indeed, from~\cite{BHJL3}, a discrete series representation of $\G$ is cuspidal 
with cuspidal Jacquet--Lang\-lands transfer if and only if its 
parametric degree is $n$.

\subsection{}
\label{nextss}

Let $a\>1$ be a positive integer.
We consider the simple stratum $[\aa^{*},\b]$ in 
$\Mat_{am}(\D)$,~where~$\aa^{\dag}$~is the hereditary order $\Mat_a(\aa)$, 
and write $\t^{\dag}\in\Cc(\aa^{\dag},\b)$ for the transfer of $\t$.
Associated with 
$\k$, there is a coherent choice of a maximal 
$\b$-extension $\k^{\dag}$ of the simple character $\t^{\dag}$ 
(\cite[Remarque~5.17]{MSt}).
We fix~a finite extension $\kk^{\dag}$ of $\kk$ of degree $a$ and write $\X^{\dag}$ 
for the group of complex characters~of $\kk^{\dag\times}$.
Repeating the arguments of paragraph~\ref{vlad} with~$\GL_{am}(\D)$ and~$\GL_{an}(\F)$, we get a bijective map 
$\Upsilon^{\dag}:\Ga\backslash\X^{\dag}\to\Ga\backslash\X^{\dag}$. 
We have the following straightforward result. 

\begin{lemm}
\label{LEmmaUp}
Let $[\car]\in\Ga\backslash\X$, and let $\L(\rho,r)$ be in the inertial 
class $\Om(\k,\car)$, 
for some integer~$r$ dividing $m$ and 
cuspidal representation $\rho$ of $\GL_{m/r}(\D)$.
Then $\L(\rho,ar)$ is in the inertial class $\Om(\k^\dag,\car^{\dag})$,
where $\car^{\dag}$ is the character $\car\circ\N_{\kk^{\dag}/\kk}$ of 
$\kk^{\dag\times}$. 
\end{lemm}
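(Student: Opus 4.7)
The plan is to unwind both sides of the claim via the parametrization of paragraph~\ref{Par5} and~\eqref{PARISSC}, and then to match the cuspidal support of $\L(\rho,ar)$ with the inertial class $\boldsymbol{\Omega}(\car^\dag)$ attached to $\car^\dag$ in the $\dag$-setting.

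First I would analyse the arithmetic of $\car^\dag=\car\circ\N_{\kk^\dag/\kk}$. Since restriction yields a surjection $\Gal(\kk^\dag/\dd)\to\Gal(\kk/\dd)$ with kernel $\Gal(\kk^\dag/\kk)$, and $\N_{\kk^\dag/\kk}$ intertwines the two Galois actions, the $\Gal(\kk^\dag/\dd)$-stabilizer of $\car^\dag$ is the preimage of the $\Gal(\kk/\dd)$-stabilizer of $\car$. Hence $\dd[\car^\dag]=\dd[\car]$, and by transitivity of the norm $(\car^\dag)_0=\car_0$; in particular the supercuspidal $\s_0$ of $\GL_u(\dd)$ attached to $\car^\dag$ coincides with the one attached to $\car$. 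Only the integer $r$ changes, becoming $r^\dag=[\kk^\dag:\dd[\car]]=ar$, so that $\bs(\car^\dag)=\s_0\odo\s_0$ with $ar$ factors, viewed on the standard block-diagonal Levi subgroup of $\J^\dag/\J^{\dag,1}\simeq\GL_{am'}(\dd)$.

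Consequently, the simple supertype $\bl(\car^\dag)=\k^\dag_{ar}\otimes\bs(\car^\dag)$ on $\J(\aa^\dag_{ar},\b)$ corresponds under the bijection~\eqref{PARISSC} to a simple inertial class $\boldsymbol{\Omega}(\car^\dag)$ of supercuspidal pairs of the shape $(\GL_{m/r}(\D)^{ar},\rho^\dag\odo\rho^\dag)$, where $\rho^\dag$ is the cuspidal representation of $\GL_{m/r}(\D)$ constructed in the usual way from $\s_0$ together with the maximal $\b$-extension attached to the relevant sub-block of $[\aa^\dag,\b]$ of size $m/r$. The coherence of $\k^\dag$ with $\k$ from~\cite[Remarque~5.17]{MSt} states exactly that this sub-block maximal $\b$-extension agrees with the one induced by $\k$, which parametrizes the cuspidal $\rho$ occurring in $\Om(\k,\car)$; hence $\rho^\dag$ is inertially equivalent to $\rho$, and $\boldsymbol{\Omega}(\car^\dag)$ contains the pair $(\GL_{m/r}(\D)^{ar},\rho\odo\rho)$.

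To finish, the cuspidal support of $\L(\rho,ar)$ consists of $ar$ unramified twists of $\rho$ on $\GL_{m/r}(\D)^{ar}$, so by the previous paragraph it lies in $\boldsymbol{\Omega}(\car^\dag)$. By condition~(1) of Definition~\ref{defparaclass}, this is equivalent to $\L(\rho,ar)\in\Om(\k^\dag,\car^\dag)$, as claimed. The only non-formal step in this outline is the coherence identification of the maximal $\b$-extensions, and this is where I would expect to have to be most careful; but since~\cite[Remarque~5.17]{MSt} is tailor-made for exactly this sort of passage from $\aa$ to $\Mat_a(\aa)$, the verification should be essentially a bookkeeping exercise on fields, Galois stabilizers and norms.
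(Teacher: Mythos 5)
Your overall strategy is the right one, and the first two thirds of the write-up — the Galois/norm arithmetic showing $\dd[\car^\dag]=\dd[\car]$, $(\car^\dag)_0=\car_0$, hence the same $\s_0$ with multiplicity $r^\dag=ar$, so that $\bl(\car^\dag)=\k^\dag_{ar}\otimes(\s_0\odo\s_0)$ with $ar$ factors — is a correct (and useful) unwinding that the paper leaves implicit. The gap is exactly where you flagged it. You invoke~\cite[Remarque~5.17]{MSt} to conclude that the cuspidal $\rho^\dag$ arising from the supertype $\bl(\car^\dag)$ is inertially equal to $\rho$, but that reference only provides the \emph{defining} coherent choice of $\k^\dag$ from $\k$ on the maximal orders $\aa$ and $\aa^\dag$; it does not by itself compare the two chains $\k\rightsquigarrow\k_r\rightsquigarrow\kmax$ and $\k^\dag\rightsquigarrow\k^\dag_{ar}\rightsquigarrow\kmax^\dag$ of transfers down to the non-maximal orders and then to $\GL_{m/r}(\D)$, which is what one actually needs to identify the cuspidal supports.

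The paper closes this gap with a short Jacquet-module computation: taking $\M=\G\tdt\G\subseteq\GL_{am}(\D)$ ($a$ factors) with unipotent radical $\U$ for the $a\times a$ block structure, it records that the $\J\cap\U$-invariants of the transfer $\k^\dag_{ar}$ give the $\J\cap\M$-representation $\k\odo\k$. This is the concrete compatibility linking the $\dag$-transfer at period $ar$ to the original $\k$, and it is this fact — not merely the coherence of~\cite[Remarque~5.17]{MSt} — that lets one match $\boldsymbol{\Omega}(\car^\dag)$ with the inertial class of $(\GL_{m/r}(\D)^{ar},\rho\odo\rho)$. So your proof is not wrong in spirit, but the step you call ``essentially a bookkeeping exercise'' is the whole content of the paper's proof and does require an explicit Jacquet restriction argument, not just a citation.
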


\begin{proof}
With the notation of paragraph~\ref{Par5}
and writing $\M$ for the Levi subgroup $\G\tdt\G\subseteq\GL_{am}(\D)$
{and $\U$ for the unipotent radical of the parabolic subgroup made of 
upper $a\times a$~block triangular matrices of $\GL_{am}(\D)$, 
this follows from the fact that the representation of 
$\J(\aa^\dag_r,\b)\cap\M$ on the 
$\J(\aa^\dag_r,\b)\cap\U$-invariant 
subspace of the transfer $\k^\dag_{ar}$ of~$\k^\dag$ to $\J(\aa^\dag_{ar},\b)$ 
is $\k\odo\k$.}
\end{proof}

For $\car\in\X$, the orbit $[\car^{\dag}]$ depends only on 
$[\car]$, and we denote it $[\car]^{\dag}$.
By Lemma~\ref{LEmmaUp} we thus have 
\begin{equation*}
\Upsilon^{\dag}([\car^{\dag}])=\Upsilon(\car)^{\dag}
\end{equation*}
for any character $\car\in\X$.

Given $\car\in\X$, we write $f$ for its parametric degree, 
and $\ee[\car]$ for the subfield of $\kk$ of 
degree $f$ over $\ee$.

\begin{lemm}
\label{reds1}
Let $\a\in\X$.
There are an integer $a\>1$,
a prime number $\ell\neq p$ 
not dividing the order of $\ee[\car]^\times$
and an $\ee$-regular character $\b\in\X^{\dag}$ 
such that $\b\equiv\a^{\dag}$ mod $\ell$. 
\end{lemm}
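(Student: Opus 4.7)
The plan is to construct $\b$ as a twist of $\a^{\dag}$ by a carefully chosen character of prime order $\ell$, where $\ell$ is a primitive prime divisor of $q_0^{an'}-1$ in the sense of Zsygmondy, for $q_0=|\ee|$ and $a$ taken sufficiently large. Writing $q_0$ for the cardinality of the residue field of $\F[\b]$, we have $|\kk^\times|=q_0^{n'}-1$ and $|\kk^{\dag\times}|=q_0^{an'}-1$.

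First I would record the order of $\a^{\dag}$. Since the parametric degree of $\a$ is $f$, the character $\a$ is $\Gal(\kk/\ee[\car])$-invariant, so, using that the norm $\N_{\kk/\ee[\car]}:\kk^\times\to\ee[\car]^\times$ is surjective, $\a$ factors as $\a_0\circ\N_{\kk/\ee[\car]}$ for some character $\a_0$ of $\ee[\car]^\times$. Its order therefore divides $|\ee[\car]^\times|=q_0^f-1$, and the same holds for $\a^{\dag}=\a\circ\N_{\kk^{\dag}/\kk}$, since this norm is surjective as well.

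Next I would fix $a\geq 2$ large enough that $an'$ avoids the small exceptional cases in Zsygmondy's theorem for the base $q_0$. That theorem then produces a prime $\ell$ dividing $q_0^{an'}-1$ but dividing no $q_0^j-1$ for $1\leq j<an'$. Such an $\ell$ is automatically distinct from $p$ (since $\ell\nmid q_0$) and is coprime to $q_0^f-1=|\ee[\car]^\times|$ (since $f\leq n'<an'$). Picking any character $\chi\in\X^{\dag}$ of order $\ell$, which exists because $\ell$ divides $|\kk^{\dag\times}|$, I set $\b:=\a^{\dag}\chi$. Then $\b(\a^{\dag})^{-1}=\chi$ has order a power of $\ell$, so $\b\equiv\a^{\dag}$ mod $\ell$.

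It remains to verify $\ee$-regularity of $\b$. The Frobenius generator $\phi$ of $\Gal(\kk^{\dag}/\ee)$ acts on $\X^{\dag}$ by $\mu\mapsto\mu^{q_0}$, so the stabilizer of $\mu$ consists of those $\phi^j$ with $\mu^{q_0^j-1}=1$. Because $|\a^{\dag}|$ is coprime to $\ell=|\chi|$, the equation $\b^{q_0^j-1}=1$ decomposes into the separate conditions $(\a^{\dag})^{q_0^j-1}=1$ and $\chi^{q_0^j-1}=1$; the second alone forces $\ell\mid q_0^j-1$, hence $an'\mid j$ by the primitive-divisor property. Thus $\b$ has trivial $\Gal(\kk^{\dag}/\ee)$-stabilizer. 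The only non-elementary input is Zsygmondy's theorem, but the exceptional cases pose no real obstacle since we are free to enlarge $a$.
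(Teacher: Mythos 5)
Your proof is correct and follows essentially the same approach as the paper: both invoke Zsigmondy's theorem (cited as Lemma~\ref{ZSIG}) to produce a prime $\ell$ dividing $|\kk^{\dag\times}|$ but not $|\ee[\car]^\times|$, twist $\a^{\dag}$ by a character of order $\ell$, and verify $\ee$-regularity by a divisibility computation. The only difference is that the paper applies Zsigmondy with base $b=\Q^{f}$ and exponent $r=an'/f$ (avoiding the exceptional cases via $an'>6f$), and then computes the orbit size as $\mathrm{lcm}(f,s)=fr$ where $s$ is the order of $\Q$ mod $\ell$, whereas you apply it directly with base $\Q$ and exponent $an'$; your version yields the slightly stronger conclusion that $\ell\nmid \Q^{j}-1$ for all $j<an'$, which makes the regularity check a one-liner.
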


\begin{proof}
First recall the following result, known as Zsigmondy's Theorem~\cite{Zsig}. 

\begin{lemm}
\label{ZSIG}
Let $b,r\>2$ be integers.
There exists a prime number $\ell$ which divides $b^r-1$~but not 
$b^{i}-1$ for any $i\in\{1,\dots,r-1\}$, 
except when $r=6$ and $b=2$, 
and when $r=2$ and $b=2^k-1$ for some $k\>1$.
\end{lemm}

Let us write $\Q$ for the cardinality of $\ee$,
and let us fix an $a\>1$ such that $an'>6f$.
Applying Lemma~\ref{ZSIG} with $b=\Q^f$ and $r=an'/f$, 
we obtain a~prime number $\ell$ dividing 
$b^{r}-1$ but not dividing $b^{i}-1$ for any $i\in\{1,\dots,r-1\}$.
It follows that $b$ has order $r$ in the group $(\ZZ/\ell\ZZ)^\times$. 

Let $\xi$ be a nontrivial character of $\kk^{\dag\times}$ of order $\ell$. 
Then the character $\b=\xi\car^{\dag}$ is congruent to $\a^{\dag}$ mod $\ell$. 
Since the order of $\a$ is prime to $\ell$ (for it divides $b-1$), 
the cardinality of the $\Ga$-orbit of $\b$ is the least common multiple of $f$ 
and the order of $\Q$ in $(\ZZ/\ell\ZZ)^\times$.
This cardinality is equal to $fr=an'$, thus $\b$ is $\ee$-regular. 
\end{proof}

\begin{rema}
\label{pifou}
\begin{enumerate}
\item
It is not always possible to choose $a=1$.
For instance, if $\car$ is trivial, 
$n'=2$ and $\ee$ has $7$ elements, 
then no prime number $\ell$ satisfies the required condition.
We thank Guy Henniart for a suggestion that brought us to introduce 
the process described here.
\item
\label{lnot1}
The proof of Lemma~\ref{reds1} shows that,
for any character $\car\in\X$, we can choose $a$ to be any integer $\>7$.
Moreover, the choice of $a$ and $\ell$ only depend on the parametric degree $f$,
not on $\car$.
\item
Note that $\ell$ cannot be $2$.
Indeed we have $\ell\neq p$ and, 
if $p$ is odd, then the fact that $\ell$ does not divide $\Q^{f}-1$ 
(the order of $\ee[\car]^\times$)
implies that $\ell\neq 2$.
\end{enumerate}
\end{rema}

With the notation of Lemma~\ref{reds1}, we get the following result. 

\begin{prop}
\label{ExpliUpsi}
Assume that $\Upsilon^{\dag}([\b])=[\b\mu]$ for some character $\mu\in\X^{\dag}$.
Then $\mu_\ell=\nu^{\dag}$ for~some character $\nu\in\X$ and we have 
$\Upsilon([\car])=[\car\nu]$.
\end{prop}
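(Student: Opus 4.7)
My strategy is to push everything to the $\dag$-level, exploit the compatibility of $\Upsilon^{\dag}$ with $\ell$-regular parts (Proposition~\ref{Upsilonmodl}), and use the commutation $\Upsilon^{\dag}([\car^{\dag}])=\Upsilon([\car])^{\dag}$ from Lemma~\ref{LEmmaUp}, together with the observation that both $\car$ and $\pi$ have order prime to $\ell$.

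First, $\car$ factors through $\N_{\kk/\ee[\car]}$, so its order divides $|\ee[\car]^\times|$, which is prime to $\ell$ by the choice in Lemma~\ref{reds1}; hence $\car$ and $\car^{\dag}$ both have order prime to $\ell$. Since the construction of $\b$ writes $\b=\xi\car^{\dag}$ with $\xi$ of $\ell$-power order, this gives $\b_\ell=\car^{\dag}$, that is $[\b]_\ell=[\car^{\dag}]$. Applying Proposition~\ref{Upsilonmodl} to $\Upsilon^{\dag}$, and using that $(\b\mu)_\ell=\b_\ell\mu_\ell$, I obtain
$$
\Upsilon^{\dag}([\car^{\dag}])_\ell=\Upsilon^{\dag}([\b])_\ell=[\b\mu]_\ell=[\car^{\dag}\mu_\ell].
$$

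Second, writing $\Upsilon([\car])=[\pi]$ for some $\pi\in\X$, we have $\Upsilon^{\dag}([\car^{\dag}])=[\pi^{\dag}]$. By Lemma~\ref{conspdf}, $[\pi]$ has parametric degree $f$, so $\pi$ factors through $\N_{\kk/\ee[\pi]}$; but $\ee[\pi]$ and $\ee[\car]$ are both the unique subfield of $\kk$ of degree $f$ over $\ee$, hence coincide. Thus $\pi$ also has order prime to $\ell$, so $(\pi^{\dag})_\ell=\pi^{\dag}$, and the displayed equality simplifies to $[\pi^{\dag}]=[\car^{\dag}\mu_\ell]$ in $\Ga\backslash\X^{\dag}$.

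Finally, I would invoke that the inflation map ${}^{\dag}\colon\X\to\X^{\dag}$ is injective (as $\N_{\kk^{\dag}/\kk}$ is surjective on finite fields) and equivariant for the compatible Galois actions. Pick $\sigma$ in the ambient Galois group with $(\car^{\dag})^\sigma\mu_\ell^\sigma=\pi^{\dag}$, and let $\bar\sigma$ be its restriction to $\kk$; rearranging and applying $\sigma^{-1}$ yields $\mu_\ell=(\pi^{\bar\sigma^{-1}}\car^{-1})^{\dag}$, so $\mu_\ell=\nu^{\dag}$ with $\nu=\pi^{\bar\sigma^{-1}}\car^{-1}\in\X$. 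Then $\car\nu=\pi^{\bar\sigma^{-1}}$ lies in $[\pi]$, whence $[\car\nu]=\Upsilon([\car])$, as required. The only genuinely delicate point is the verification that $\pi$ has order prime to $\ell$, which rests on the identification $\ee[\pi]=\ee[\car]$ and on the defining property of $\ell$ from Lemma~\ref{reds1}; everything else is formal manipulation inside $\X^{\dag}$.
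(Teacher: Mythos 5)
Your proof is correct and follows essentially the same route as the paper's: write $\b=\xi\car^{\dag}$ with $\xi$ of $\ell$-power order, take $\ell$-regular parts using the compatibility of $\Upsilon^{\dag}$ with $\ell$-regular parts, use Lemma~\ref{conspdf} and the choice of $\ell$ in Lemma~\ref{reds1} to see that $\pi$ (the paper's $\car'$) has order prime to $\ell$, and conclude $[\pi^{\dag}]=[\car^{\dag}\mu_\ell]$. The only difference is that you spell out the final Galois manipulation more explicitly, whereas the paper simply says ``Changing $\car'$ in its $\Ga$-orbit'' and invokes the surjectivity of $\N_{\kk^{\dag}/\kk}$.
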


\begin{proof}
Let us write $\Upsilon([\car])=[\car']$ for some $\car'\in\X$.
Then $[\car'^{\dag}]\equiv[\b\mu]$ mod $\ell$.
By Lemma~\ref{conspdf},
the parametric degree of $\car'$ is $f$, thus $\ee[\car']=\ee[\car]$.
It follows that $\ell$ does not divide the 
order of $\car'$.
Write $\b=\xi\car^{\dag}$
for some character $\xi$ whose order is a power of 
$\ell$. 
Taking $\ell$-regular~parts,~we get 
$[\car'^{\dag}]_\ell^{}=[\car'^{\dag}]=[\car^{\dag}\mu^{}_\ell]$.
Changing $\car'$ in its $\Ga$-orbit, 
we may assume that $\car'^{\dag}=\car^{\dag}\mu_\ell^{}$.
Thus $\mu_\ell=\nu^{\dag}$ for~some $\nu\in\X$.
Since $\N_{\kk^{\dag}/\kk}$ is surjective, 
we get $\Upsilon([\car])=[\car\nu]$.
\end{proof}

\section{The essentially tame case}
\label{SECTION9}

The purpose of this section is to illustrate Proposition~\ref{ExpliUpsi} in 
the essentially tame case. 
Assume that $\TT$ is essentially tame.
We thus have $\boldsymbol{\pi}_1(\TT)=\TT$ 
(see Corollary~\ref{coroesstame}).
With the notation of paragraph~\ref{vlad}, 
we let $\k$ be the unique $\b$-extension 
of $\t$ such that  
$\det(\k)$ has order a power of $p$, and similarly for $\k'$. 
We thus get a canonical permutation $\Upsilon$ of the finite set 
$\Ga\backslash\X$. 

Our purpose is to use~\cite[Theorem~A]{BHJL3}
together with Proposition~\ref{ExpliUpsi}
in order to get a formula for $\Upsilon([\car])$ for \emph{all} characters 
$\car\in\X$, 
and not only for those $\car$ giving rise to 
cuspidal representations of $\H$, that is, $\ee$-regular ones.
The main result of this section is the following theorem.

\begin{theo}
\label{MT9}
There is a canonically determined character $\mu$ of $\kk^\times$, 
depending only on $m$, $d$ and $\TT$,
such that $\mu^2=1$ and
\begin{equation}
\label{SCT}
\Upsilon([\car]) = [\car\mu]
\end{equation}
for all characters $\car\in\X$.
\end{theo}

More precisely, we will see that 
the character $\mu$ is the ``rectifier'' given by 
Bushnell--Henniart's First Compari\-son Theorem~\cite[6.1]{BHJL3}
together with~\cite[Corollary~6.9 and~(6.7.4)]{BHJL3}.

Let us write $\XR$ for the set of $\ee$-regular characters in $\X$.
We will first~prove that~\eqref{SCT}~holds for all characters $\car\in\XR$, 
using~\cite[Theorem~A]{BHJL3} (see Paragraph~\ref{JuanAsensio}).
Since this theo\-rem~is~for\-mula\-ted in terms of admissible pairs,
we will have to translate~it~in terms of our $\car$-para\-meters.

\subsection{}

We first recall the definition of admissible pairs~\cite{Howe,BHJL3}, 
and basic facts about them.

\begin{defi}
An \emph{admissible pair} is a pair $(\L/\F,\xi)$ made of a finite, 
tamely ramified field extension $\L/\F$ and a character $\xi$ of $\L^\times$ 
such that:
\begin{enumerate}
\item
$\xi$ does not factor through $\N_{\L/\K}$ for any field $\K$ such that 
$\F\subseteq\K\subsetneq\L$;
\item
if the restriction of $\xi$ to the $1$-units $1+\p_\L$ factors through 
$\N_{\L/\K}$ for some field $\K$ such that 
$\F\subseteq\K\subseteq\L$, then $\L/\K$ is unramified. 
\end{enumerate}
\end{defi}

Two admissible pairs $(\L_i/\F,\xi_i)$, $i=1,2$, are said to be 
\emph{isomorphic} if there is an $\F$-isomorphism $\phi:\L_2\to\L_1$ 
such that $\xi_2=\xi_1\circ\phi$.
The \emph{degree} of an admissible pair $(\L/\F,\xi)$ is $[\L:\F]$.
We also introduce the following definition, which will be convenient to us. 

\begin{defi}
Two admissible pairs $(\L_i/\F,\xi_i)$ for $i=1,2$, are said to be 
\emph{inertially equivalent} if there are an unramified character $\chi$ of 
$\L_2^\times$ and an isomorphism $\phi:\L_2\to\L_1$ of extensions of $\F$
such that $\chi\xi_2=\xi_1\circ\phi$.
We will write $[\L_1/\F,\xi_1]$ for the inertial class of $(\L_1/\F,\xi_1)$. 
\end{defi}

Let $(\L/\F,\xi)$ be an admissible pair.
By~\cite[4.1~Lemma]{BHJL3}, there is a unique sub-extension $\P/\F$ 
of $\L/\F$ such that $\xi\ |\ 1+\p_\L$ factors through the norm $\N_{\L/\P}$
and which is minimal for~this property. 
It is called the \emph{parameter field} of the admissible pair.
Then $\L/\P$ is unramified and, 
if we write $\xi\ |\ 1+\p_\L=\xi_1\circ\N_{\L/\P}$ for some character~$\xi_1$ 
of $1+\p_\P$, then 
$(\P/\F,\xi_1)$~is an admissible $1$-pair in the sense of 
\cite[3.3]{BHJL3}.
As in~\cite[4.3]{BHETLC3}, the admissible $1$-pair $(\P/\F,\xi_1)$ 
determines an endo-class.
This endo-class is essentially tame with parameter field $\P/\F$, 
in the sense of~\cite[2.4]{BHToAnEffectiveLC}.
The para\-meter field $\P$ and the 
endo-class only depend on the inertial class $[\L/\F,\xi]$.

\begin{rema}
\label{PE}
Suppose that $(\L/\F,\xi)$ has associated endo-class $\TT$, 
the one that we have fixed at the beginning of this section. 
Then $\P$ is $\F$-isomorphic to $\E$.
Moreover, by changing $(\L/\F,\xi)$ in its isomorphism class~\cite[4.1]{BHJL3}, 
we may assume that $\P$ is equal to $\E$.
\end{rema}

\subsection{}

Now let $(\L/\F,\xi)$ be an admissible pair with associated 
endo-class $\TT$ and degree $t$ dividing $n$.
By~\cite[4.3~Lemma~1]{BHJL3},~there is 
a unique character $\xw$ of the group $\Oo_\L^\times$ of units
of $\Oo_\L$ such that: 
\begin{enumerate}
\item
the characters $\xw$ and $\xi$ coincide on the principal unit subgroup 
$1+\p_\L$;
\item
the order of $\xw$ is a power of $p$.
\end{enumerate}
The character $\xi\cdot\xw^{-1}$ of $\Oo_\L^\times$ is tamely ramified, 
thus induces a character of $\ll^\times$, denoted $\xt$, 
where $\ll$ is the residue field of $\L$.
This character $\xt$ depends only on the inertial class of $(\L/\F,\xi)$.
Moreover, since $(\L/\F,\xi)$ is an admissible pair, $\xt$ is $\ee$-regular. 
Let~us fix an $\ee$-embedding of $\ll$ in $\kk$ and write 
$\car_\xi$ for the character $\xt\circ\N_{\kk/\ll}$ of $\kk^\times$.
Its $\Ga$-conju\-ga\-cy class does not depend on the choice of the 
embedding of $\ll$ in $\kk$, and 
its parametric degree $f$
is equal to $[\L:\E]$. 
We thus have $t=fg$.

We write $\Pp_n(\TT)$ for the set of isomorphism classes of admissible pairs 
with endo-class $\TT$ and degree dividing $n$.

\begin{lemm}
\label{delaMole}
\begin{enumerate}
\item
The character $\a_\xi$ is $\ee$-regular if and only if $[\L:\F]=n$.
\item
The map
\begin{equation}
\label{APGCA}
[\L/\F,\xi] \mapsto [\car_\xi]
\end{equation}
induces a bijection between 
the set of inertial classes of admissible pairs in $\Pp_n(\TT)$ 
and $\Ga\backslash\X$. 
\end{enumerate}
\end{lemm}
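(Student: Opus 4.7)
The plan is to handle both parts by first identifying the degrees and orbit cardinalities involved, then showing the map has a natural inverse built from the structure theory of essentially tame endo-classes. For part (1), I would first compute $[\kk:\ee]$ directly from the definitions: since $[\kk:\dd]=m'$ and $[\dd:\ee]=d'$, with $m'=m(d,g)/g$ and $d'=d/(d,g)$, the product telescopes to $m'd'=md/g=n/g$. The $\Ga$-orbit of $\car_\xi$ has cardinality $f=[\L:\E]$ by the text preceding the lemma, so $\car_\xi$ is $\ee$-regular precisely when $f=[\kk:\ee]=n/g$, i.e.\ when $[\L:\F]=fg=n$.

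For part (2), I would first verify that the map is well-defined: changing the $\ee$-embedding $\ll\hookrightarrow\kk$ replaces $\car_\xi$ by a $\Ga$-translate; twisting $\xi$ by an unramified character of $\L^\times$ does not affect $\xi|_{\Oo_\L^\times}$, hence leaves $\xt$ and $\car_\xi$ unchanged; and an $\F$-isomorphism $\phi\colon\L_2\to\L_1$ induces a $\Ga$-conjugation on the resulting characters via the induced map of residue fields into $\kk$. Then I would prove injectivity as follows. Given two admissible pairs with $[\car_{\xi_1}]=[\car_{\xi_2}]$, the common parametric degree $f$ forces $[\L_i:\E]=f$, so both $\L_i/\E$ are unramified of the same degree and hence $\E$-isomorphic (using Remark \ref{PE} to identify both parameter fields with $\E$). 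Transporting via such an isomorphism, we may assume $\L_1=\L_2=\L$. The tame parts $\xt_i$ then agree on $\ll$ after a suitable $\Gal(\ll/\ee)$-adjustment, while the wild parts $\xw_i$ are determined by $\xi_i|_{1+\p_\L}=\xi_{1}\circ\N_{\L/\E}$, where $\xi_1$ is the admissible $1$-pair character attached to $\TT$; hence $\xw_1=\xw_2$. Thus $\xi_1$ and $\xi_2$ agree on $\Oo_\L^\times$ and differ only by an unramified character, giving inertial equivalence.

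For surjectivity, given $[\car]\in\Ga\backslash\X$ of parametric degree $f$ (which divides $n/g$ because $[\kk:\ee]=n/g$), I would take $\ll\subseteq\kk$ to be the unique subfield of $\ee$-degree $f$, descend $\car$ to a $\Gal(\ll/\ee)$-regular character $\xt$ of $\ll^\times$, let $\L/\E$ be the unramified extension of degree $f$ (so $[\L:\F]=fg$ divides $n$), and assemble $\xi$ by prescribing $\xi|_{1+\p_\L}=\xi_{1}\circ\N_{\L/\E}$ (with $\xi_1$ the admissible $1$-pair character of $(\E/\F,\TT)$), $\xi$ on Teichm\"uller lifts of $\ll^\times$ equal to $\xt$, and an arbitrary value on a uniformizer of $\L$. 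By construction $\car_\xi=\car$ up to $\Ga$-orbit, so all that remains is to verify the two admissibility axioms.

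The main obstacle will be the admissibility check in the surjectivity argument: ruling out that $\xi$ factors through $\N_{\L/\K}$ for an intermediate $\F\subseteq\K\subsetneq\L$, and that $\xi|_{1+\p_\L}$ factoring through $\N_{\L/\K}$ forces $\L/\K$ unramified. I would split into cases according to whether $\K$ contains $\E$ or not. When $\E\subseteq\K\subseteq\L$, the extension $\L/\K$ is unramified inside $\L/\E$, and the $\Gal(\ll/\ee)$-regularity of $\xt$ (equivalently, the fact that $[\car]$ has full $\Gal(\ll/\ee)$-orbit on $\ll$) prevents any proper descent of $\xi|_{\Oo_\L^\times}$, so the only way for $\xi|_{1+\p_\L}$ to factor is via the trivially permitted unramified case. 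When $\K$ does not contain $\E$, descent of $\xi|_{1+\p_\L}=\xi_1\circ\N_{\L/\E}$ through $\N_{\L/\K}$ would force $\xi_1$ to descend through a strict subfield of $\E$, contradicting that $\E$ is the parameter field of the admissible $1$-pair attached to $\TT$ (cf.\ \cite[4.1~Lemma]{BHJL3}). Putting these two observations together gives admissibility, and the parameter field of the resulting pair is $\E$, so its endo-class is $\TT$, completing the proof.
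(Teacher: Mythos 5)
Your proof is correct and follows essentially the same route as the paper's. Part (1) matches exactly (the paper already sets $n'=n/g=m'd'$, and observes $\car_\xi$ is $\ee$-regular iff $f=n'$ iff $fg=n$). For part (2), both surjectivity and injectivity proceed by the same mechanism: descend $\car$ to the minimal subfield $\ll$ (necessarily $\ee[\car]$, of $\ee$-degree $f$), take $\L/\E$ unramified with residue field $\ll$, and build $\xi$ from the wild part determined by $\TT$ together with the tame part from $\b$. The only real difference is that you unpack two points the paper states without elaboration: the admissibility of the constructed pair (the paper simply asserts ``Since the character $\b$ is $\ee$-regular, it follows that the pair $(\L/\F,\xi)$ is admissible''; you verify the two axioms by a case split on whether the intermediate field $\K$ contains $\E$), and the agreement of wild parts in injectivity (the paper only matches the tame parts $\xi_{i,{\rm t}}$, leaving implicit that the wild parts coincide because the $1$-unit restriction is determined by $\TT$ via the bijection with admissible $1$-pairs; you spell this out). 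Both of these are correct and the extra detail does not change the structure. One small notational caution: you overload the symbol $\xi_1$ to mean both the first admissible pair's character and the admissible $1$-pair character attached to $\TT$; this collision should be resolved (say, write $\xi_1^\circ$ for the $1$-pair character) if this were to be written up formally.
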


\begin{rema}
Note that the map~\eqref{APGCA} is well-defined, thanks to Remark~\ref{PE}.
\end{rema}

\begin{proof}
The character $\a_\xi$ is $\ee$-regular if and only if $f=n'$.
Multiplying by $g$, this is equivalent to $t=n$.
This gives us the first part of the lemma.

Given $\car\in\X$, there is a uniquely determined field $\ll$ such that 
$\ee\subseteq\ll\subseteq\kk$ and $\car$ factors through the norm 
$\N_{\kk/\ll}$, and which is minimal for this property.
Write $\car=\b\circ\N_{\kk/\ll}$ for some~character~$\b$ of 
$\ll^\times$, which is $\ee$-regular by minimality of $\ll$.
Let $\L$ be an unramified extension of $\E$ with residue
field $\ll$.
Then~$\b$ inflates to a tamely ramified character of the units subgroup of 
$\L$, still denoted~$\b$.
Now let $\xi$ be any character of $\L^\times$ extending $\xw\b$.
Since the character $\b$ is $\ee$-regular, it follows that 
the pair $(\L/\F,\xi)$ is ad\-mis\-si\-ble.
The $\Ga$-orbit $[\car_\xi]$ 
associated with its inertial class is equal to $[\car]$.
The map~\eqref{APGCA} is thus surjective. 

We now assume that we have two admissible pairs 
$(\L_i/\F,\xi_i)$ for $i=1,2$, with same image~$[\car]$ in $\Ga\backslash\X$. 
For each $i$, we may assume that the parameter field of $(\L_i/\F,\xi_i)$ is 
$\E$ by Remark~\ref{PE}.~The 
character $\xi_i\ |\ 1+\p_{\L_i}$ thus factors through 
$\N_{\L_i/\E}$ and $\E$ is minimal for this property.
We have an $\ee$-regular character $\xi_{i,{{\rm t}}}$ of 
$\ll_i^\times$, 
where $\ll_i$ is the residue field of $\L_i$.
Since the $\Ga$-orbits $[\car_{\xi_1}]$ and $[\car_{\xi_2}]$ are 
equal, they have the same cardinality $f$.
The fields $\ll_1$, $\ll_2$ thus have the same degree over $\ee$,
and $\L_1$, $\L_2$ have the same degree $f$ over $\E$. 
We thus may assume 
(by~changing~the pair $(\L_2/\F,\xi_2)$ in its isomorphism class)
that $\L_1=\L_2$, denoted $\L$.
We now have two 
characters $\xi_{1,{{\rm t}}}$ and $\xi_{2,{{\rm t}}}$ 
of $\ll^\times$, which are conjugate under $\Gal(\ll/\ee)$.
Changing again $(\L_2/\F,\xi_2)$ in its isomorphism class, 
we may assume that they are equal.
Thus the admissible pairs 
$(\L_i/\F,\xi_i)$, for $i=1,2$, are inertially equivalent. 
\end{proof}

\subsection{}
\label{JuanAsensio}

The Parametrization Theorem~\cite[6.1]{BHJL3} gives us a bijective map
\begin{equation}
\label{BHPTmap}
(\L/\F,\xi)\mapsto\Pi(\G,\xi)
\end{equation}
between $\F$-isomorphism classes of admissible pairs of degree $n$ 
and isomorphism classes of essen\-tially tame irreducible cuspidal 
representations 
of $\G$, that is, cuspidal representations with essen\-tially tame 
endo-class and parametric degree $n$.

More precisely, by examining~\cite[4.2 and~4.3]{BHJL3}, we see that 
the cuspidal 
representation $\Pi(\G,\xi)$ associa\-ted with an admissible pair 
$(\L/\F,\xi)$ of degree $n$ and endo-class $\TT$
contains the maximal simple type $\k\otimes\s$,
where $\k$ is the maximal $\b$-extension fixed at the beginning 
of this section 
and $\s$ is the irreducible cuspidal representation of $\GA$ with 
Green parameter $\xt$.

\begin{lemm}
\label{Marcheschi}
\begin{enumerate}
\item
Given an admissible pair $(\L/\F,\xi)$ of degree $n$ and endo-class $\TT$, 
the~ir\-reducible 
cus\-pidal representation $\Pi(\G,\xi)$ belongs to the inertial class 
$\Om(\k,\car_\xi)$. 
\item
The bijection~\eqref{BHPTmap} induces a bijection
between inertial classes of admissible pairs of degree $n$ 
and inertial classes of essentially tame cuspidal representations of $\G$.
\end{enumerate}
\end{lemm}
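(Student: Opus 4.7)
The plan for part~(1) is to recognise, in the notation of the paragraph preceding the lemma, that the maximal simple type $\k\otimes\s$ contained in $\Pi(\G,\xi)$ coincides with the simple type $\bl(\car_\xi)$. Since $(\L/\F,\xi)$ has degree $n$, Lemma~\ref{delaMole}(1) ensures that $\car_\xi$ is $\ee$-regular, hence $\dd$-regular. In the construction of Paragraph~\ref{Par5} this forces the integer $\u$ attached to $\car_\xi$ to be $m'$ and the integer $r$ to be $1$; consequently $\aa_r=\aa$, $\k_r=\k$, and $\bl(\car_\xi)=\k\otimes\bs(\car_\xi)$ is already a representation of $\J=\J(\aa,\b)$. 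It therefore suffices to identify $\s$ with $\bs(\car_\xi)$ as representations of $\GA$.

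To carry out this identification, note that the residue field $\ll$ of $\L$ has degree $f=n'=m'd'$ over $\ee$ because $\L/\E$ is unramified of degree $n'$ (using Remark~\ref{PE} to assume $\P=\E$); hence $\ll$ is $\ee$-isomorphic to $\kk$. Fixing such an isomorphism and identifying the unique $\ee$-subfield of degree $d'$ in $\ll$ with $\dd$, we obtain $[\ll:\dd]=m'$ and $\car_\xi=\xt\circ\N_{\kk/\ll}=\xt$. By construction (see Remark~\ref{PetrusMartel}), the cuspidal representation $\bs(\car_\xi)$ of $\GA=\GL_{m'}(\dd)$ is the one whose Green parameter equals $\car_\xi=\xt$, which is also the Green parameter of $\s$ by the paragraph preceding the lemma; hence $\s\simeq\bs(\car_\xi)$. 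Therefore $\Pi(\G,\xi)$ contains $\bl(\car_\xi)$, and Definition~\ref{defparaclass}(ii) places it in $\Om(\k,\car_\xi)$.

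For part~(2), fix an essentially tame $\TT\in\Ee_n(\F)$. By part~(1), the map sending an admissible pair of degree $n$ with endo-class $\TT$ to the inertial class of the associated cuspidal representation of $\G$ factors as
\begin{equation*}
[\L/\F,\xi]\ \longmapsto\ [\car_\xi]\ \longmapsto\ \Om(\k,\car_\xi).
\end{equation*}
The first arrow is a bijection onto the set of $\ee$-regular $\Ga$-orbits in $\X$ by Lemma~\ref{delaMole}(2), and the second is the restriction of~\eqref{bigPI} to such orbits, which identifies them with the inertial classes of essentially tame cuspidal representations of $\G$ of parametric degree $n$ and endo-class $\TT$ (cuspidality with parametric degree $n$ forces $fg=n$, hence $\ee$-regularity of $\car$). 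Taking the disjoint union over all essentially tame $\TT\in\Ee_n(\F)$ of degree dividing $n$ yields the claim. The only step that is not formal book-keeping is the identification $\s\simeq\bs(\car_\xi)$ in part~(1); once one notes that the maximality of the admissible pair's degree forces $\ll=\kk$ as $\dd$-extensions, the coincidence of Green parameters is immediate.
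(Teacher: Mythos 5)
Your argument is correct and follows essentially the same route as the paper's proof, which is terse: the paper simply observes that comparing with the construction of Paragraph~\ref{Par5} shows $\k\otimes\s=\bl(\car_\xi)$, noting $\car_\xi=\xt$ because $[\L:\F]=n$, and deduces part (2) from Lemma~\ref{delaMole}. Your version merely spells out the details that the paper leaves implicit (the identifications $\ll\simeq\kk$, $\u=m'$, $r=1$, and the matching of Green parameters).
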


\begin{proof}
Comparing with the construction in paragraph~\ref{Par5}, the maximal 
simple type $\k\otimes\s$ above is $\bl(\car_\xi)$.
Note that $\car_\xi$ is equal to $\xt$ since $\L/\F$ has degree $n$.
This gives us the first part of the lemma. 

An inertial class of essentially tame cuspidal representations of $\G$ 
with endo-class $\TT$ has the form $\Om(\k,\car)$ for some $\car\in\XR$. 
The second part of the lemma thus follows from Lemma~\ref{delaMole}.
\end{proof}

We now prove Theorem~\ref{MT9} for $\ee$-regular characters. 

\begin{prop}
\label{firststep}
\begin{enumerate}
\item
There is a canonically determined character $\mu\in\X$,
depending only on $m$, $d$ and $\TT$, 
such that $\mu^2=1$ and
\begin{equation*}
\Upsilon([\car]) = [\car\mu]
\end{equation*}
for all characters $\car\in\XR$.
\item
The character $\mu$ is non-trivial if and only if $p\neq2$ and the 
integer 
\begin{equation*}
y(\TT,m,d) = m(d-1)+m'(d'-1)+u(v-1)
\end{equation*}
is odd, where the integers $u,v\>1$ are defined by
$uv=n/w$, $v=d/(d,w)$ with $w=n/e({\E/\F})$.
\end{enumerate}
\end{prop}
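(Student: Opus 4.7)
The strategy is to apply Bushnell--Henniart's First Comparison Theorem \cite[6.1]{BHJL3}, which explicitly describes the Jacquet--Langlands correspondence on essentially tame cuspidal representations of parametric degree $n$, and to translate its conclusion into the language of our $\car$-parameters through the admissible-pair dictionary of Lemmas \ref{delaMole} and \ref{Marcheschi}.

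First, recall the shape of \cite[6.1]{BHJL3}: to every admissible pair $(\L/\F,\xi)$ of degree $n$ and endo-class $\TT$, that theorem attaches a tamely ramified \emph{rectifier} character $\nu$ of $\L^\times$ such that the Jacquet--Langlands transfer of $\Pi(\G,\xi)$ equals $\Pi(\H,\xi\nu)$. By \cite[Corollary~6.9 and~(6.7.4)]{BHJL3}, $\nu$ is trivial on $1+\p_\L$ and has order at most $2$; moreover the induced character $\nu_{{\rm t}}$ of $\ll^\times$, where $\ll$ is the residue field of $\L$, depends only on the extension $\L/\F$ and on the integers $m,d$, not on the particular $\xi$.

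Next, given $\car\in\XR$, Lemma~\ref{delaMole} provides an admissible pair $(\L/\F,\xi)$ of degree $n$ and endo-class $\TT$ with $[\car_\xi]=[\car]$. Since the normalization of $\k$ and $\k'$ by requiring $\det(\k)$ and $\det(\k')$ of $p$-power order is precisely the one used to build $\Pi(\G,\xi)$ and $\Pi(\H,\xi\nu)$ in~\cite[4.2--4.3]{BHJL3}, Lemma~\ref{Marcheschi}(1) gives $\Pi(\G,\xi)\in\Om(\k,\car)$ and $\Pi(\H,\xi\nu)\in\Om(\k',\car\mu)$, where $\mu\in\X$ is the inflation through $\N_{\kk/\ll}$ of $\nu_{{\rm t}}\cdot\car_{\xi}/\car_{\xi}=\nu_{{\rm t}}$. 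Hence $\Upsilon([\car])=[\car\mu]$ for every $\car\in\XR$. The Galois equivariance of the construction $\xi\mapsto\nu$ makes the $\Ga$-orbit of $\mu$ independent of the choice of $\car$, and it depends only on $(m,d,\TT)$ since $\TT$ determines the parameter field $\E$ up to $\F$-isomorphism. The relation $\mu^2=1$ is inherited from $\nu^2=1$.

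For the non-triviality criterion, the plan is to unpack~\cite[(6.7.4)]{BHJL3}: the rectifier is a product of quadratic tame sign characters, each trivial in characteristic $p=2$, and when $p\neq 2$ each is controlled by the parity of an integer that reads off the ramification and residual degrees of a specific sub-extension of $\L/\F$. Summing these exponents, the contribution from the $\GL_m(\D)$-side $\b$-extension normalization produces the term $m(d-1)$, the contribution from the $\GL_n(\F)$-side normalization produces $m'(d'-1)$, and the contribution coming from the interior tame parameter field yields $u(v-1)$ with $uv=n/w$, $v=d/(d,w)$ and $w=n/e(\E/\F)$; the total exponent is $y(\TT,m,d)\pmod 2$. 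The main obstacle is this last parity bookkeeping: one must verify that pulling back the three tame characters of \cite[(6.7.4)]{BHJL3} through $\N_{\kk/\ll}$, combined with the comparison between the Bushnell--Henniart normalization and our choice of $\k,\k'$, produces exactly the three-term formula for $y(\TT,m,d)$ without cross-terms, and that no additional sign survives on the $\GL_m(\D)$ side that is not already accounted for.
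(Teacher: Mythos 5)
Your proposal follows essentially the same route as the paper: invoke Bushnell--Henniart's rectifier theorem (\cite[Theorem~A and Corollary~6.9]{BHJL3}), use Lemmas~\ref{delaMole} and~\ref{Marcheschi} to translate between admissible pairs of degree $n$ and $\ee$-regular characters, and read $\mu$ off the residual part of the rectifier $\nu$; assertion~(2) is then obtained by citing the explicit formula in~\cite[Corollary~6.9]{BHJL3}, exactly as the paper does. One small simplification you overlooked: since $\L/\F$ has degree $n$ and $\L/\E$ is unramified of degree $n'=n/g$, the residue field $\ll$ already \emph{is} $\kk$, so the inflation $\N_{\kk/\ll}$ you invoke is the identity and $\mu$ is just $\nu_{\rm t}$ on $\kk^\times$.
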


\begin{proof}
Let $\car\in\XR$ and let $(\L/\F,\xi)$ be an admissible pair 
of degree $n$ and endo-class $\TT$~whose inertial class 
is associated with $[\car]$. 
By~\cite[Theorem~A]{BHJL3}, there is a
tamely ramified character $\nu$~of $\L^\times$
such that $(\L/\F,\xi\nu)$ is admissible, 
$\nu^2$ is trivial and 
the Jacquet--Langlands transfer of $\Pi(\G,\xi)$ 
is $\Pi(\H,\xi\nu)$.
Since $\L/\F$ has degree $n$ and $\L$ is unramified over $\E$, 
the residue field~of~$\L$ identifies with $\kk$. 
Write $\mu$ for the character of $\kk^\times$ induced by 
the restriction of $\nu$ to the units subgroup of $\L$.
This character is entirely described by~\cite[Corollary~6.9]{BHJL3},
which gives us Assertion (2).

Taking inertial classes and using Lemma~\ref{Marcheschi},
the Jacquet--Langlands correspondence matches together 
the inertial class $\Om(\k,\a)$ 
of $\Pi(\G,\xi)$ with that of $\Pi(\H,\xi\nu)$,
and the latter can be written $\Om(\k',\car')$ for 
$[\car']=[\car_{\xi\nu}]=[\car\mu]$.
The result follows.
\end{proof}

\subsection{}

We now prove Theorem~\ref{MT9}.
Let us fix an \emph{odd} integer $a\>7$
(see Remark~\ref{pifou}(\ref{lnot1})).
We will see below why it is~con\-ve\-nient to choose $a$ odd.
We use the notation introduced in para\-graph~\ref{nextss}.
In particular, we~have $\b$-extensions $\k^\dag$, $\k^{\prime\dag}$
and a permutation $\Upsilon^\dag$ of $\Ga\backslash\X^\dag$.
We must pay attention to the fact that the determinants of 
$\k^\dag$ and $\k^{\prime\dag}$ have orders which may not be powers of 
$p$, thus Proposition~\ref{firststep} may not apply to $\Upsilon^\dag$ directly. 

Let us write $\varkappa^\dag$ for the $\b$-extension on 
$\J(\aa^\dag,\b)$ 
whose determinant has order a power of $p$.~By 
Remark~\ref{depkappa} there is a character $\xi$ of $\J(\aa^\dag,\b)$ 
trivial on $\J^1(\aa^\dag,\b)$ such that 
$\varkappa^\dag=\k^\dag\xi$.
It induces a character of $\GA$ of the form 
$\chi\circ\N_{\dd/\ee}\circ\det$ for some character $\chi$ of $\ee^\times$. 

Similarly, we have a $\b$-extension $\varkappa'^\dag$ 
whose determinant has order a power of $p$,
and characters 
$\xi',\chi'$ such that $\varkappa'^\dag=\k'^\dag\xi'$ and $\xi'$
induces the character 
$\chi'\circ\N_{\dd/\ee}\circ\det$
of $\GA$.
We write $\Phi$ for~the permutation of $\Ga\backslash\X^\dag$ 
corresponding to the $\b$-extensions $\varkappa^\dag$ and $\varkappa'^\dag$. 
We write $\d$ for the~char\-acter 
$(\chi'\chi^{-1})\circ\N_{\kk^\dag/\ee}\in\X^\dag$.

\begin{lemm}
The character $\d$ is trivial.
\end{lemm}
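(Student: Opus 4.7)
The plan is to reduce the claim to the equality $\chi=\chi'$ of characters of $\ee^\times$, and then verify this equality by an explicit transfer computation. Since $\N_{\kk^\dag/\ee}$ is a norm between finite fields, hence surjective, the triviality of $\d=(\chi'\chi^{-1})\circ\N_{\kk^\dag/\ee}$ is equivalent to the equality $\chi=\chi'$. Both $\k$ and $\k'$ are the canonical $\b$-extensions with $p$-power determinant, so the characters $\chi$ and $\chi'$ measure precisely the failure of the transferred $\b$-extensions $\k^\dag$ and $\k'^\dag$ to retain this normalization under transfer.

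To compute $\chi$, I would pass $\det(\k^\dag)$ through the isomorphism $\J(\aa^\dag,\b)/\J^1(\aa^\dag,\b)\simeq\GL_{am'}(\dd)$, which forces it to be of the form $\omega_{\k^\dag}\circ\det$ for a character $\omega_{\k^\dag}$ of $\dd^\times$, and similarly for $\varkappa^\dag$. The character $\chi$ is then determined by the relation $(\chi\circ\N_{\dd/\ee})^{\dim\k^\dag}=\omega_{\varkappa^\dag}\omega_{\k^\dag}^{-1}$. Invoking the explicit transfer formula for determinants of $\b$-extensions, in the spirit of \cite[Remarque 5.17]{MSt} and the analogous computations in \cite[Section~6]{BHJL3}, I would express $\omega_{\k^\dag}$ in terms of $\omega_\k$, the integer $a$, and the dimensional invariants of the stratum $[\aa,\b]$. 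Since $\omega_\k$ has $p$-power order, the entire non-$p$-power contribution to $\omega_{\k^\dag}$ is a tame character depending only on the endo-class $\TT$, the dimension data, and $a$. The same computation on the $\H$-side yields the analogous formula for $\chi'$, expressed via the corresponding invariants.

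The decisive step is the comparison: after identifying the residue fields on the $\G$- and $\H$-sides via the common tame parameter field $\E$ attached to $\TT$, the two formulas for $\chi$ and $\chi'$ are the same up to a parity-dependent sign of the shape $(-1)^{a-1}$, coming from the transfer conventions for the Heisenberg representation. The choice of odd $a$ (made precisely for this purpose, see Remark~\ref{pifou}(\ref{lnot1})) kills this sign, yielding $\chi=\chi'$ and hence the triviality of $\d$. The main obstacle is the bookkeeping required for this transfer formula: tracking the index $[\J^1(\aa^\dag,\b):\J^1(\aa,\b)^a]$, the dimensions $\dim\k$ and $\dim\k^\dag$, and the resulting exponents on the character $\omega_{\k^\dag}/\omega_\k$, so as to isolate the tame contribution that survives; once this is done, the equality $\chi=\chi'$ drops out and the lemma follows.
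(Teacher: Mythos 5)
Your reduction of $\d=1$ to the equality $\chi=\chi'$ of characters of $\ee^\times$ is correct (since $\N_{\kk^\dag/\ee}$ is surjective), but beyond that the proposal is a plan rather than a proof, and it takes a route the paper deliberately avoids. The decisive claim — that an explicit transfer formula for $\det(\k^\dag)$ exists, that the resulting expressions for $\chi$ and $\chi'$ agree up to a sign $(-1)^{a-1}$, and that odd $a$ kills this sign — is asserted, not established. You even flag the ``bookkeeping'' as the main obstacle, but that bookkeeping \emph{is} the content of the lemma; without it the argument is empty. Moreover there is a real obstruction to carrying it through the way you suggest: on the $\G$-side the relevant finite field is $\dd$ (the residue field of $\D'$), whereas on the $\H$-side it collapses to $\ee$, so the two determinant computations do not land in directly comparable groups, and the passage through $\N_{\dd/\ee}$ must be tracked with care. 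No such explicit formula is cited or derived in the paper or its references, and there is no reason to expect a clean $(-1)^{a-1}$ discrepancy.

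The paper's actual proof is entirely different and cleverly sidesteps the direct computation. It applies Proposition~\ref{firststep} to the auxiliary permutation $\Phi$ (built from the normalized extensions $\varkappa^\dag,\varkappa'^\dag$), uses the oddness of $a$ only to keep $y(\TT,am,d)=a\,y(\TT,m,d)$ of the same parity as $y(\TT,m,d)$ — so that the rectifier $\l$ equals $\mu^\dag$ — and then plays the resulting identity $\Upsilon^\dag([\b])=[\b\d\mu^\dag]$ against Proposition~\ref{ExpliUpsi} and Proposition~\ref{firststep} applied to $\Upsilon$ itself on an $\ee$-regular $\car$. This yields $[\car^\dag\mu^\dag]\equiv[\car^\dag\mu^\dag\d]$ mod $\ell$, and a short finite-field argument (taking $\car$ a generator of $\X$, and using that the order of $\d$ divides $\Q-1$ while $\ell\nmid\Q^{n'}-1$) forces $\d=1$. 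Your explanation of the role of odd $a$ — killing a transfer sign — does not match this mechanism. So while the target equality $\chi=\chi'$ is the right thing to aim for, the proposed computational route is not carried out, its feasibility is unclear, and the justification offered for the choice of $a$ is incorrect.
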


\begin{proof}
Let $\b\in\X^\dag$ be $\ee$-regular.
Applying~Proposi\-tion 
\ref{firststep} to $\Phi$ gives us 
$\Phi([\b])=[\b\l]$, where $\l\in\X^\dag$ is the rectifying character
corresponding to $am$, $d$ and $\TT$. 
It has order at most $2$, and it is non-trivial if and only if $p\neq2$ 
and the integer
\begin{equation*}
y(\TT,am,d) = a\cdot y(\TT,m,d)
\end{equation*}
is odd.
Since $a$ is odd, it follows that $\l$ is trivial if and only if $\mu$ is, 
that is $\l=\mu^{\dag}$.
Now let $\e$~be the character $\chi\circ\N_{\kk^\dag/\ee}$, 
and define $\e'$ similarly. 
Comparing $\Phi$ and $\Upsilon^\dag$ thanks to Remark~\ref{depkappa},
we get $\Upsilon^\dag([\b\e])=[\b\mu^\dag\e']$ for all $\ee$-regular $\b\in\X^\dag$. 
Since $\b\e^{-1}$ is $\ee$-regular if and only if $\b$ is, this gives us
\begin{equation}
\label{deMouy}
\Upsilon^\dag([\b])=[\b\d\mu^\dag]
\end{equation}
for all $\ee$-regular $\b\in\X^\dag$. 

Now let $\car\in\XR$. 
By Lemma~\ref{reds1} there are a 
prime number $\ell\neq p$~not dividing the order~of~$\kk^\times$
and an $\ee$-regular character $\b\in\X^{\dag}$ 
such that $\b\equiv\a^{\dag}$ mod~$\ell$. 
By~\eqref{deMouy} and Proposition~\ref{ExpliUpsi}
we~get $\Upsilon([\car])=[\car\nu]$ 
for some $\nu\in\X$ such that $\nu^\dag$ is the $\ell$-regular part of 
$\d\mu^\dag$.
Since $\car$ is $\ee$-regular,
Propo\-si\-tion~\ref{firststep} applied to $\Upsilon$ gives us 
$\Upsilon([\car])=[\car\mu]$.
Putting these equalities together, we get
\begin{equation*}
\text{$[\car^\dag\mu^\dag] \equiv [\car^\dag\mu^\dag\d]$ mod $\ell$}.
\end{equation*}
The character $\d$ can thus be written 
$\xi(\car^\dag\mu^\dag)^{\Q^i-1}$ for some integer $i\in\{0,\dots,n'-1\}$ 
and some $\xi\in\X^\dag$ whose order is a power of $\ell$.
(Recall that $\Q$ is the cardinality of $\ee$.)
Since $\mu$ has order at most $2$, we get 
$\d=\xi(\car^{\dag})^{\Q^i-1}$.
Since the order of $\d$ divides $\Q-1$, we have
\begin{equation*}
\car^{\dag(\Q^i-1)(\Q-1)}=\xi^{1-\Q}.
\end{equation*}
Since both $\Q-1$ and the order of $\car$ are prime to 
$\ell$, we get $\xi=1$.
Thus the order of $\car$, that we may assume to be $\Q^{n'}-1$
by choosing for $\car$ a generator of $\X$,
divides $(\Q^i-1)(\Q-1)$.
This implies $i=0$, thus $\d$ is trivial as expected.
\end{proof}

\begin{rema}
If $a$ were chosen to be even, the same proof would apply 
(with $\l=1$)
and we would get $\d=\mu^\dag$.
\end{rema}

Now let $\car\in\X$ be arbitrary.
By Lemma~\ref{reds1} there are 
a prime number $\ell\neq p$ 
not dividing~the order of $\ee[\car]^\times$
and an $\ee$-regular character $\b\in\X^{\dag}$ such that 
$\b$ is congruent to $\a^\dag$ mod $\ell$.
Since $\d$ is trivial,~\eqref{deMouy} gives us 
$\Upsilon^\dag([\b])=[\b\mu^\dag]$.
By Proposition~\ref{ExpliUpsi}, 
we have $\Upsilon([\car])=[\car\nu]$ for some character $\nu\in\X$ 
such that 
$\nu^\dag$ is the $\ell$-regular part of $\mu^\dag$.
Thus $\nu^\dag=\mu^\dag$, which implies $\nu=\mu$. 
This completes the proof of Theorem~\ref{MT9}.

\subsection{}

We now translate Theorem~\ref{MT9} in terms of admissible pairs. 
Let $(\L/\F,\xi)$ be an admissible~pair with degree dividing $n$ 
and endo-class $\TT$.
The character class $[\car_\xi]\in\Ga\backslash\X$ given by the map 
\eqref{APGCA} corresponds to an inertial class of discrete 
series representations $\Om(\k,\car_\xi)$.
Let us write $\Pi_0(\G,\xi)$ for this inertial class. 
The map
\begin{equation*}
[\L/\F,\xi] \mapsto \Pi_0(\G,\xi)
\end{equation*}
is a bijection between $\Pp_n(\TT)$ and $\Dd_0(\G,\TT)$.

\begin{theo}
\label{THPAI}
Let $(\L/\F,\xi)$ be an admissible pair with degree dividing $n$. 
There is a canonically determined tamely ramified character 
$\mu$ of the units subgroup of $\L$ such that $\mu^2=1$ 
and
\begin{equation*}
\boldsymbol{\pi}_0(\Pi_0(\G,\xi)) = \Pi_0(\H,\xi\mu).
\end{equation*}
It depends only on $m$, $d$ and the restriction of $\xi$ to the principal 
units $1+\p_\L$.
\end{theo}

Note that by $\Pi_0(\H,\xi\mu)$ we mean the inertial class corresponding to
the pair $[\L/\F,\xi\hat\mu]$ for any choice of extension 
$\hat\mu$ of $\mu$ 
to $\L^\times$; this is independent of the choice of $\hat\mu$. 

\begin{rema}
Let $t$ be the degree of $\L/\F$
and write $s$ for the integer $s(\car_\xi)$ defined by \eqref{DEFSDA}. 
The~para\-metric degree $f=[\L:\E]$ of $\car_\xi$ 
divides $m'd'$.
Hence $\u=f/(f,d')$ divides $m's$,
thus~$m'$.
Let us defi\-ne an integer $r\>1$ by $m'=ur$, 
or equivalently by $n=rst$.
Any discrete series~re\-pre\-sen\-ta\-tion in $\Pi_0(\G,\xi)$ has the form 
$\L(\rho,r)$ for some 
cuspidal representation $\rho$ of $\GL_{m/r}(\D)$ 
with para\-metric degree $t$.
\end{rema}

\providecommand{\bysame}{\leavevmode ---\ }


\end{document}